\def\C{\mathbf{C}}
\def\R{\mathbf{R}}
\def\Q{\mathbf{Q}}
\def\Z{\mathbf{Z}}
\def\N{\mathbf{N}}
\def\P{\mathbf{P}}
\def\1{\mathbf{1}}
\def\a{\alpha}
\def\e{\varepsilon}
\def\l{\lambda_1}
\def\f{\varphi}
\def\g{\gamma}
\def\p{\psi}
\def\r{\varrho}
\def\om{\omega}
\def\E{{\mathcal E}}
\def\W{{\mathcal W}}
\def\dist{\mathrm{dist}}
\newcommand{\cc}{\mathbf{C}}
\newcommand{\cv}{\rightarrow}
\newcommand{\set}[1]{\left\{#1\right\}}
\newcommand{\norm}[1]{\left\Vert#1\right\Vert}
\newcommand{\abs}[1]{\left\vert#1\right\vert}
\newcommand{\cd}{\cc^2}
\newcommand{\rest}[1]{ \arrowvert_{#1}}
\newcommand{\m}{{\bf M}}
\newcommand{\qq}{\mathcal{Q}}
\newcommand{\geom}{\dot\wedge}
\newcommand{\pair}[2]{\left\langle #1,#2 \right\rangle}
\newcommand{\eqdef}{:=}
\newcommand{\ignore}[1]{}
\DeclareMathOperator{\supp}{Supp}
\newtheorem{lem}{Lemma}[section]
\newtheorem{pro}[lem]{Proposition}
\newtheorem{defi}[lem]{Definition}
\newtheorem{def/not}[lem]{Definition/Notation}
\newtheorem{Pro/def}[lem]{Proposition/Definition}
\newtheorem{theo}{Theorem}
\newtheorem{thm}[lem]{Theorem}
\newtheorem{cor}[lem]{Corollary}
\theoremstyle{remark}
\newtheorem{rqe}[lem]{Remark}
\newtheorem{exa}[lem]{Example}
\begin{document}

\title[Energy and invariant measure]{Dynamics of meromorphic mappings with small topological degree II: 
Energy and invariant measure}

\author{Jeffrey Diller \and Romain Dujardin \and Vincent Guedj}

\address{Department of Mathematics\\
         University of Notre Dame\\
         Notre Dame, IN 46556}
\email{diller.1@nd.edu}
\address{CMLS\\ \'Ecole Polytechnique \\ 91128 Palaiseau \\
         France}
\email{dujardin@math.polytechnique.fr}
\address{Universit\'e Aix-Marseille 1 \\ LATP \\ 13453 Marseille Cedex 13 \\  France}
\email{guedj@cmi.univ-mrs.fr}


\thanks{J.D. acknowldeges the National Science Foundation for its support through grant DMS 06-53678.}
\thanks{R.D. was partially supported by ANR through project BERKO}

\maketitle

\begin{abstract}
We continue our study of the dynamics of meromorphic mappings with small topological degree $\lambda_2(f)<\lambda_1(f)$ on a compact K\"ahler surface $X$.  Under  general hypotheses we are able to construct a canonical invariant measure which is mixing, does not charge pluripolar sets and has a natural geometric description.

Our hypotheses are always satisfied when $X$ has Kodaira dimension zero, or when the mapping is induced by a polynomial endomorphism of $\C^2$.  They are new even in the birational case ($\lambda_2(f)=1$).  We also exhibit families of mappings where our assumptions are generically satisfied and show that if counterexamples exist, the corresponding measure must give mass to a pluripolar set. 
\end{abstract}

\section*{Introduction}

In this article we continue from \cite{ddg1} our study of the dynamics of meromorphic mappings with small topological degree on complex surfaces.  Whereas our previous article focused on constructing canonical forward and backward invariant currents for a given mapping, here we take up the problem of intersecting these currents to create a natural invariant measure.  In fact, the first half of this article does not concern dynamics at all but rather the general problem of defining the wedge product of two positive closed $(1,1)$ currents on a compact complex surface.  This is of interest in its own right, and our treatment draws much in content and spirit from recent work of Guedj and Zeriahi \cite{gz2} concerning the definition of the complex Monge-Amp\`ere operator in the compact setting.  Let us begin, nevertheless,
by rehearsing the dynamical setting of immediate concern. We refer the reader to \cite{ddg1} for a more thorough presentation and list of references.

\medskip

Let $(X,\omega)$ be a compact K\"ahler surface and $f:X\to X$ a meromorphic self-map.  We assume that $f$ is {\em 1-stable}, meaning that the induced action $f^*:H^{1,1}(X)\to H^{1,1}(X)$ satisfies 
$$
(f^n)^* = (f^*)^n 
\text{ for all } n\in\N. 
$$
We let $\lambda_2 = \lambda_2(f)$ denote the topological degree of $f$ and $\lambda_1 = \lambda_1(f)$ be the first dynamical degree of $f$.  These are the spectral radii of the actions $f^*:H^{j,j}(X)\to H^{j,j}(X)$ for $j=2$ and $j=1$, respectively.  Our main assumption is that $f$ has {\em small topological degree}; that is,
$$
\lambda_2(f)< \lambda_1(f).
$$
Notice that with this terminology a birational map ($\lambda_2 = 1$) 
has small topological degree only when $\lambda_1>1$. Note also that in this case either
$X$ is rational or ${\rm kod}(X)=0$.

Under these conditions we have shown in \cite{ddg1} that there exist closed positive (1,1) currents $T^+$ and $T^-$ uniquely characterized by
\begin{equation}
 \label{invcurrents}
T^+=\lim_{n \rightarrow \infty} \frac{1}{\lambda_1^n} {(f^{n})^* \omega}, \text{ and }
T^-=\lim_{n \rightarrow \infty} \frac{1}{\lambda_1^n}{(f^n)_* \omega}
\end{equation}
Furthermore, $T^\pm$ can be written as $T^\pm = \omega^\pm + dd^c G^\pm$, where $\omega^\pm$ are positive closed currents with bounded  potentials. By scaling $\omega$, we normalize the cohomological intersection number so that $\set{T^+}\cdot \set{T^-} = 1$. Finally, under the additional assumption that $X$ is projective (non-projective examples can arise on surfaces of Kodaira dimension zero), the invariant currents have special geometric properties: $T^+$ is laminar while $T^-$ is woven (see \cite{ddg1} for definitions).

\medskip
We have three goals in this article.  
\begin{itemize}
 \item[-]
First, we seek general conditions under which the wedge product $T^+\wedge T^-$ may be reasonably defined as a probability measure $\mu_f$ on $X$. Here,  ``reasonably defined'' is understood as a continuity requirement: when $T^\pm$ are approximated in a standard pluripotential theoretic sense by convergent sequences $(T^\pm_j)$ of less singular positive closed $(1,1)$ currents, then we insist that $\lim_{j\to\infty} T^+_j\wedge T^-_j\to \mu_f$.

 \item[-] Second, we seek to show that the conditions that guarantee existence of $\mu_f$ also ensure that it has good dynamical properties: 
\begin{enumerate}[{\em i.}]
 \item $\mu_f$ is invariant under $f$; 
 \item $\mu_f$ is mixing;
 \item $\mu_f$ may be alternately viewed as {\em geometric intersection} of the laminar/woven structures of the currents $T^+$ and $T^-$.
\end{enumerate} 
In a third article, \cite{ddg3} we will greatly elaborate on this second goal, studying the fine ergodic properties of $(f,\mu_f)$ when  properties {\em i.} to  {\em iii.} are satisfied.

 \item[-] Finally we seek to apply our results to particular examples of meromorphic mappings.  
This requires that we find checkable dynamical criteria that imply 
the potential theoretic conditions needed to define $\mu_f$.
\end{itemize}

It should be noted that, as with the other two articles \cite{ddg1,ddg3} in this series, 
this one represents an attempt to generalize things that are known about birational maps to 
the larger setting of maps with small topological degree. On the other hand, this paper is the 
only one in the series that gives new results even in the birational setting.

\medskip
\begin{center}
 $\diamond$
\end{center}
\medskip

Let us now describe our results. Regarding the first goal, let $S = \alpha + dd^c \f$ and $T= \beta + dd^c \p$ 
be any two positive closed $(1,1)$ currents on $X$, each expressed in terms of a 
positive current with bounded local potentials ($\alpha$, $\beta$) and a negative (relative) potential function 
($\f$, $\p$).  
Products $\alpha \wedge T$ involving a positive current with bounded potentials are 
well-understood \cite{bt1}.  
The most straightforward way to define $S\wedge T$ more generally is to require 
$\f \in L^1(T)$ and then declare $dd^c \f \wedge T \eqdef dd^c(\f T)$.  
We write ``$T\in L^1(S)$'' when this integrability holds.  
This condition is independent of the choice of decomposition of $T$ and moreover 
symmetric in $S$ and $T$.  

In our dynamical situation, the condition $T^+\in L^1(T^-)$ suffices for our first goal.  However, it is not clear that the resulting measure $\mu_f$ then has good dynamical properties.  In particular it seems difficult, given only the $L^1$ condition, to establish that the intersection $T^+\wedge T^-$ is geometric.  Moreover, in certain cases (see e.g. the examples in \S \ref{subs:irrational}) we do not know whether $T^+\in L^1(T^-)$, but we are nevertheless able to define a ``reasonable'' wedge product $T^+\wedge T^-$ by taking advantage of the global (compact K\"ahler) context, in the spirit of \cite{gz2}.

Let $S$ and $T$ be positive closed currents as before. We say that {\em $S$ has finite $T$-energy},  and denote ``$S \in \E(T)$'', if there exists an unbounded convex increasing function $\chi:(-\infty,0]\to (-\infty,0]$ such that 
$\chi \circ \f \in L^1(\alpha \wedge T)$ and the ``weighted energy''
\begin{equation}\label{eq:weight}
 \int \chi'\circ \f\,d\f\wedge d^c\f \wedge T
\end{equation}
is finite.  This condition is independent of the choice of decomposition for $S$. We stress that it does not imply that
$S\in L^1(T)$.

Our first main result is the following.

\begin{theo}
\label{theo:intersect} Let $S= \alpha + dd^c \f$ and $T=\beta + dd^c \p$ 
be positive closed currents on a compact K\"ahler surface as above. 
Assume that $S\in \E(T)$, $T \in \E(S)$ and $T$ does not charge pluripolar sets. 

Then the product $S\wedge T$ is well-defined and does not charge pluripolar sets.
\end{theo}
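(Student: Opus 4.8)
The plan is to reduce the statement to an approximation-and-convergence argument, following the philosophy of Guedj–Zeriahi's treatment of the Monge–Ampère operator. First I would fix decompositions $S = \alpha + dd^c\f$, $T = \beta + dd^c\p$ with $\f,\p \le 0$, choose weights $\chi, \tilde\chi$ witnessing the finite energy hypotheses, and approximate $\f$ and $\p$ by decreasing sequences of bounded (relative) potentials $\f_j \searrow \f$, $\p_k \searrow \p$ — e.g. $\f_j = \max(\f,-j)$ — so that $S_j := \alpha + dd^c\f_j$ and $T_k := \beta + dd^c\p_k$ are positive closed currents with bounded potentials and the products $S_j \wedge T_k$ are classically defined à la Bedford–Taylor. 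The candidate for $S \wedge T$ is the limit of $S_j \wedge T_j$, and the task is to show this limit exists, is independent of all choices, and gives no mass to pluripolar sets.

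The key analytic step is a uniform energy estimate: I would show that $\int \chi\circ\f_j \, d(S_j\wedge T_k)$ and the weighted energies $\int \chi'\circ\f_j\, d\f_j\wedge d^c\f_j\wedge T_k$ are bounded independently of $j$ and $k$, using integration by parts to trade derivatives between the two potentials and the convexity/monotonicity of $\chi$, $\tilde\chi$ to absorb cross terms (this is where the hypothesis that $\chi$ is convex increasing and unbounded is essential, and where a Cauchy–Schwarz-type bound on $\int d\f\wedge d^c\p\wedge(\text{positive current})$ in terms of the two energies enters). These estimates give, on one hand, weak-$*$ compactness of the family $\{S_j\wedge T_k\}$, and on the other hand — via the uniform integrability they encode — that any limit measure puts no mass on the set $\{\f = -\infty\}$, hence (combined with the analogous statement for $\p$ and the non-pluripolarity of $T$, which forces pluripolar sets to be negligible once we control $\{\f=-\infty\}\cup\{\p=-\infty\}$) no mass on pluripolar sets. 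Monotone convergence in $k$ for fixed $j$ (Bedford–Taylor) identifies $\lim_k S_j\wedge T_k$, and then a diagonal/monotonicity argument together with the energy bound upgrades this to convergence of $S_j\wedge T_j$; independence of the decomposition follows because any two admissible decompositions differ by a bounded psh function, which is harmless for Bedford–Taylor products and passes to the limit.

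I expect the main obstacle to be the uniform energy estimate and, bundled with it, the control of the mixed term $d\f\wedge d^c\p\wedge T$: one must integrate by parts on the compact Kähler surface keeping careful track of the boundary-free global identities, and then dominate the mixed Dirichlet-type integral by a product of the two one-sided weighted energies using a weighted Cauchy–Schwarz inequality — the subtlety being that the natural inequality wants matching weights on the two sides, so one has to either symmetrize the weights or run the argument with a common weight $\chi$ dominated by both $\chi$ and $\tilde\chi$. A secondary technical point is that finite energy does \emph{not} imply $S \in L^1(T)$ (as the statement explicitly warns), so I cannot simply define $dd^c(\f T)$ directly; the definition must be made through the approximation, and proving the limit is genuinely independent of the choice of truncation requires the monotonicity of $j\mapsto S_j\wedge T_k$ at the level of the relevant energy functionals rather than of the measures themselves.
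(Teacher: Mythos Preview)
Your overall strategy---approximate by bounded potentials, establish uniform energy bounds, pass to a limit, show the limit is non-pluripolar---matches the paper's philosophy, and the uniform estimates you anticipate on $\int(-\chi\circ\f_j)\,S_j\wedge T_k$ are indeed proved and used. But you are missing the paper's central structural device, and without it your convergence step has a real gap. The paper does \emph{not} obtain $S\wedge T$ via weak-$*$ compactness and identification of cluster points. Instead it uses the Bedford--Taylor plurifine identity
\[
\mathbf{1}_{\{u>v\}}\,[\alpha+dd^c u]\wedge T \;=\; \mathbf{1}_{\{u>v\}}\,[\alpha+dd^c\max(u,v)]\wedge T
\]
to see that the restricted measures $\mu_j(\f,T):=\mathbf{1}_{\{\f>-j\}}(\alpha+dd^c\f_j)\wedge T$ are \emph{increasing} in $j$; the limit $\mu(\f,T)$ then exists automatically as a Borel measure, and the hypothesis $\f\in\E(T,\alpha)$ is precisely that it has full mass. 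The equality $\mu(\f,T)=\mu(\p,S)$ and the convergence $S_j\wedge T_j\to\mu(\f,T)$ are obtained by comparing on the plurifine open sets $\{\f>-j\}\cap\{\p>-j\}$, where the same identity makes everything agree; the non-pluripolarity of $T$ is used exactly to kill the residual mass on $\{\p=-\infty\}$. There is no Cauchy--Schwarz on mixed terms $d\f\wedge d^c\p\wedge T$ anywhere in this existence argument---the plurifine localization sidesteps those terms entirely.

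Your route (``compactness plus uniform integrability, then a diagonal/monotonicity argument'') does not, as written, yield convergence: compactness gives only subsequential limits, and the sequence $j\mapsto S_j\wedge T$ is \emph{not} monotone without the plurifine cutoff $\mathbf{1}_{\{\f>-j\}}$, so your unspecified ``monotonicity'' has nothing to grab onto. The energy bounds you describe are genuine ingredients of the paper, but they enter \emph{after} $\mu(\f,T)$ has been secured by monotonicity---they are used to upgrade convergence from canonical to arbitrary approximants and to prove strong Borel convergence of $\chi\circ\f_j\,S_j\wedge T_j$---not as a substitute for the existence step. Your worry about matching weights in a weighted Cauchy--Schwarz is therefore a red herring for this theorem.
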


When the first assumption of this theorem holds for the invariant currents 
$T^+$ and $T^-$ associated to a 1-stable meromorphic map $f:X\to X$ of small 
topological degree, we say that $f$ {\em has finite dynamical energy}.
In this case we can show that the current $T^+$ does not charge pluripolar sets (see 
Proposition \ref{pro:alternativeT+}).
Thus the measure $\mu_f=T^+ \wedge T^-$ is well defined and does not charge pluripolar sets.
We are further able to prove that $\mu_f$ has the desired properties (i)-(iii) above, property (iii) being the hardest to establish.

\begin{theo}
\label{theo:properties} 
Let $f$ be a 1-stable meromorphic self map of a compact K\"ahler surface, 
with small topological degree.  Assume that  $f$ has finite dynamical energy. 
Then $\mu_f \eqdef T^+\wedge T^-$
is invariant and mixing.

If furthermore $X$ is projective then  $\mu_f$ is described by the geometric intersection of the laminar/woven structures of $T^{+/-}$.
\end{theo}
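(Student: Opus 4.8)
The plan is to take the first part of the paper for granted: since $f$ has finite dynamical energy, Theorem~\ref{theo:intersect} (applied with $S=T^-$, $T=T^+$, which is legitimate because, by Proposition~\ref{pro:alternativeT+}, $T^+$ charges no pluripolar set) shows that $\mu_f\eqdef T^+\wedge T^-$ is a well-defined positive measure charging no pluripolar set, of total mass $\set{T^+}\cdot\set{T^-}=1$, hence a probability measure. Ergodicity follows from mixing, so only invariance, mixing, and the geometric description remain. For invariance I would deduce $f_*\mu_f=\mu_f$ from the eigenrelations $f^*T^+=\lambda_1 T^+$ and $f_*T^-=\lambda_1 T^-$ (built into the construction of $T^\pm$ in \cite{ddg1}) together with the projection formula $f_*(f^*A\wedge B)=A\wedge f_*B$:
$$
f_*\mu_f=f_*\!\left(\tfrac1{\lambda_1}f^*T^+\wedge T^-\right)=\tfrac1{\lambda_1}\,T^+\wedge f_*T^-=T^+\wedge T^-=\mu_f.
$$
The only subtlety is to legitimize this, since $T^+\wedge T^-$ is defined through an approximation scheme; I would first record a lemma to the effect that $f^*$ and $f_*$ preserve finite dynamical energy with a controlled loss (this is where $\lambda_2<\lambda_1$ enters), so that the approximating sequences in Theorem~\ref{theo:intersect} can be chosen compatibly with the dynamics. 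The finitely many indeterminacy points of $f^{\pm1}$ are pluripolar, hence $\mu_f$-negligible, which removes the usual obstruction to applying the projection formula to currents.

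For mixing I would reduce, by density of smooth functions in $L^2(\mu_f)$ and Cauchy--Schwarz, to proving decay of correlations $\pair{\mu_f}{(\phi\circ f^n)\psi}\to\pair{\mu_f}{\phi}\pair{\mu_f}{\psi}$ for smooth $\phi,\psi$. The heart of the matter is a quantitative version of the convergence $\lambda_1^{-n}(f^n)^*\omega\to T^+$: the relative potentials should converge in energy at a rate governed by the spectral gap of $f^*$ on $H^{1,1}(X)$. Writing $T^+=\lambda_1^{-n}(f^n)^*\omega+dd^c u_n$ with $u_n\to0$ under energy control, and symmetrically $T^-=\lambda_1^{-n}(f^n)_*\omega+dd^c w_n$, one inserts these into $\mu_f=T^+\wedge T^-$ and expands. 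One is left with a principal term $\lambda_1^{-2n}(f^n)^*\omega\wedge(f^n)_*\omega$, which has locally bounded potentials and is therefore amenable to the projection formula; combined with the invariance of $\mu_f$ it yields $\pair{\mu_f}{\phi}\pair{\mu_f}{\psi}$ in the limit. The remaining cross terms $dd^c u_n\wedge T^-$, $dd^c w_n\wedge T^+$ and $dd^c u_n\wedge dd^c w_n$ are estimated, after integration by parts, by the finite $T^\mp$-energies of $T^\pm$. Carrying out these energy estimates uniformly in $n$ is the main technical burden of this step.

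For the geometric description we are in the projective case, so by \cite{ddg1} $T^+$ is laminar and $T^-$ is woven. The strategy is to exhibit exhausting families $T^+_\e\nearrow T^+$ and $T^-_\e\nearrow T^-$ in which $T^+_\e$ is \emph{uniformly} laminar (carried by a measurable family of pairwise disjoint holomorphic disks) and $T^-_\e$ is uniformly woven, chosen so that the $T^\pm_\e$ have uniformly bounded energies. For a uniformly laminar current and a uniformly woven one whose disk families are transverse almost everywhere, the analytic wedge product coincides with the geometric one, $T^+_\e\wedge T^-_\e=T^+_\e\geom T^-_\e$, by a Fubini/slicing computation over the two families. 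Letting $\e\to0$, the left-hand sides converge to $\mu_f=T^+\wedge T^-$ by the continuity property built into Theorem~\ref{theo:intersect} (which is exactly why uniform energy bounds on the approximants are needed), while the right-hand sides increase to a limit one adopts as the definition of $T^+\geom T^-$; this gives $\mu_f=T^+\geom T^-$.

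As the authors signal, property~(iii) is the hardest, and it has two delicate sub-points. The first is to construct laminar (respectively woven) approximations that \emph{simultaneously} exhaust $T^+$ (respectively $T^-$) and keep their energies bounded: the laminar/woven structures of \cite{ddg1} furnish approximations, but reconciling them with the energy bounds demanded by Theorem~\ref{theo:intersect} is where real work is required. The second is to prove that the disks of $T^+$ and the disks/webs of $T^-$ meet transversally $\mu_f$-almost everywhere; a tangency locus of positive $\mu_f$-measure must be ruled out, and this is where $\lambda_2<\lambda_1$ and the fact that $\mu_f$ charges no pluripolar set are brought to bear. Once transversality is secured, the identification $\mu_f=T^+\geom T^-$ follows from the approximation argument above.
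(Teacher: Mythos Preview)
Your invariance sketch is fine and close to the paper's, which uses the concrete approximants $\mu_{n,p}=\lambda_1^{-n-p}(f^n)^*\omega^+\wedge(f^p)_*\omega^-$ (satisfying $f_*\mu_{n,p}=\mu_{n-1,p+1}$) and passes to the limit via continuity of $f_*$ on measures not charging $I^+$. The other two parts, however, contain genuine gaps.

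For mixing, your ``principal term'' $\lambda_1^{-2n}(f^n)^*\omega\wedge(f^n)_*\omega$, tested against $(\phi\circ f^n)\psi$, does not visibly decouple: after the projection formula one obtains $\lambda_1^{-2n}\int\phi\,\omega\wedge(f^n)_*[\psi\,(f^n)_*\omega]$, and there is no clean way to extract $\int\psi\,d\mu_f$ from the nested pushforward. Nor is it clear why the cross terms, tested against the increasingly oscillatory $(\phi\circ f^n)\psi$, should tend to zero rather than merely stay bounded. The paper proceeds asymmetrically: it keeps $T^+$ intact, uses the canonical approximants $G^-_j=\max(G^-,-j)$ (the energy condition enters here to give uniformity in $n$) to reduce to $\int k\,h\circ f^n\,(\omega+dd^cv)\wedge T^+$ with $v$ smooth, and then invokes the key equidistribution statement $h\circ f^n\,T^+\to(\int h\,d\mu_f)\,T^+$ (Proposition~\ref{pro:extrem}, proved as in \cite[Theorem~3.3]{ddg1}). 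This is what actually separates $h$ from $k$. Passing back from smooth to bounded $v$ uses Cauchy--Schwarz and the bound $\int d(h\circ f^n)\wedge d^c(h\circ f^n)\wedge T^+=O((\lambda_2/\lambda_1)^n)$.

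For the geometric intersection you have misidentified the difficulty. The continuity in Theorem~\ref{theo:intersect} (equivalently Theorem~\ref{thm:cvma}) concerns \emph{decreasing potentials} $\varphi_j\searrow G^+$, $\psi_j\searrow G^-$; the laminar/woven approximations $T^\pm_\qq$ are increasing \emph{subcurrents} of $T^\pm$, not of the form $\omega^\pm+dd^c\varphi^\pm_j$ with $\varphi^\pm_j$ decreasing, so that theorem says nothing about $T^+_\qq\wedge T^-_\qq\to\mu_f$. Transversality is a red herring: once $T^+_\qq\in L^1_{loc}(T^-_\qq)$ (which the paper arranges by a small trimming when $T^+\notin L^1(T^-)$), the identity $T^+_\qq\wedge T^-_\qq=T^+_\qq\geom T^-_\qq$ is soft (Proposition~\ref{pro:geom}). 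The hard point is to show no mass is lost, i.e.\ that $\int\psi_\qq\,T^-\wedge(T^+-T^+_\qq)\to0$. The paper splits $T^-=T^-_j+(T^--T^-_j)$ with $T^-_j=\omega^-+dd^cG^-_j$: the second piece is controlled uniformly in the cube size $r$ by the energy condition (via $\mu_f\{G^-\le-j\}\to0$), while the first, after integration by parts against the cutoff $\psi_\qq$ and Cauchy--Schwarz, is handled using the \emph{strong approximability} estimate $\m(T^+-T^+_\qq)\le Cr^2$ together with the finiteness of $\int dH_j\wedge d^cH_j\wedge T^+$ for the bounded potential $H_j$.
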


\medskip

We now turn to the problem of checking the finite dynamical energy condition; i.e. of finding a weight $\chi$ so that the integral in \eqref{eq:weight} is demonstrably finite.   Some weights are, of course, easier to work with than others.  The class of {\em homogeneous weights} $\chi(t) = -(-t)^p$, $0<p\leq 1$ turns out to be particularly useful for dynamical applications.

\smallskip

For the weight $\chi(t) = t$ we obtain the following criterion which generalizes the work of Bedford and the first author \cite{bd} (see also \cite{dg}).  In order to state it, we let $I^+$ denote the indeterminacy set of $f$, and $I^- = f(E^+)$ denote the image of the exceptional set $E^+$ of $f$.

\begin{theo}
\label{theo:criterion}  
Let $f$ be a 1-stable meromorphic self map of a compact K\"ahler surface, 
with small topological degree, and let $T^\pm = \omega^\pm + dd^cG^\pm$ be the invariant currents.
Suppose that $G^+$ is finite at each point of $I^-$ and $G^-$ is finite at each point 
in $I^+$. Assume furthermore that
\begin{itemize}
\item[-] either $T^+ \in L^1(T^-)$,
\item[-] or no point in either $I^+$ or $I^-$ is spurious.  
\end{itemize}
Then $T^+ \in \E(T^-)$ and 
$T^-\in \E(T^+)$ with weight $\chi(t) = t$.
\end{theo}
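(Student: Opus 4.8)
The plan is to strip the statement down to a single energy integral, exploit the symmetry of the hypotheses to treat only one side, and then estimate that integral by expanding the potential of $T^+$ as a telescoping series along the orbit and invoking the iterate formula for $T^-$ — so that everything collapses to a local estimate near the indeterminacy points, which is where the real content lies. Concretely, for $\chi(t)=t$ one has $\chi'\equiv1$, so $T^+\in\E(T^-)$ is literally the conjunction of $G^+\in L^1(\omega^+\wedge T^-)$ (the measure $\omega^+\wedge T^-$ being well defined since $\omega^+$ has bounded potentials, \cite{bt1}) and
$$
\int dG^+\wedge d^cG^+\wedge T^-<\infty .
$$
The other conclusion, $T^-\in\E(T^+)$, is the mirror statement under the formal symmetry exchanging the roles of $T^+$ and $T^-$ (hence of $f^*$ and $f_*$, and of $I^+$ and $I^-$); both hypotheses of the theorem are invariant under it, so it suffices to prove $T^+\in\E(T^-)$. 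The $L^1$ clause is soft: granted the energy bound, $G^+\in L^1(\omega^+\wedge T^-)$ follows from the usual comparison of $L^1$-norm with energy together with the fact that $T^+$ charges no pluripolar set (Proposition~\ref{pro:alternativeT+}). I would work throughout with the truncations $G^+_k=\max(G^+,-k)$, so that all wedge products are classical, and recover the claim by lower semicontinuity of the energy as $k\to\infty$.

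For the energy bound itself, recall from the construction of $T^+$ in \cite{ddg1} that $G^+$ can be written, up to controlled terms, as $\sum_{N\ge0}\lambda_1^{-N}\,u^+\circ f^N$, where $u^+\le0$ is locally bounded away from $I^+$ and carries an explicit logarithmic singularity at the points of $I^+$, of a shape dictated by how $f$ blows those points up (the relevant multiplicities being bounded because $\lambda_2<\lambda_1$). Expanding $dG^+\wedge d^cG^+$ as the corresponding double sum and bounding the off-diagonal terms by Cauchy--Schwarz, one is left with the diagonal $\sum_N\lambda_1^{-2N}\int d(u^+\circ f^N)\wedge d^c(u^+\circ f^N)\wedge T^-$. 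The adjunction between $(f^N)^*$ and $(f^N)_*$ together with the iterate formula $(f^N)_*T^-=\lambda_1^N T^-$ for the invariant current (again \cite{ddg1}) turns each term into $\lambda_1^N\int du^+\wedge d^cu^+\wedge T^-$, so that — using $\lambda_1>1$ to sum the geometric series — the whole problem reduces to the single estimate $\int du^+\wedge d^cu^+\wedge T^-<\infty$, and symmetrically $\int du^-\wedge d^cu^-\wedge T^+<\infty$.

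The main obstacle is precisely this local estimate. Since $du^+\wedge d^cu^+$ is bounded away from $I^+$, the question is genuinely local around each point $p\in I^+$, where $u^+$ has a logarithmic pole and $T^-$ lives; such a pole is in general \emph{not} $T^-$-integrable, so both hypotheses must be consumed here. If $G^-$ is finite at $p$, then $T^-$ has zero Lelong number there and one integrates by parts across the singularity, converting $\int_U du^+\wedge d^cu^+\wedge T^-$ into a boundary term plus an integral $\int u^+\,dd^c(\cdot)\wedge T^-$ that is finite precisely when $p$ is not spurious — this is the ``no spurious point'' branch, and I expect most of the labor to go into pinning down the local model of $u^+$ at $p$, justifying the integration by parts through the singular locus, and verifying the criterion ``non-spurious $\Rightarrow$ the local integral converges''. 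When $p$ is spurious this integration by parts fails, and one instead invokes $T^+\in L^1(T^-)$, i.e.\ $G^+\in L^1(T^-)$: since near $I^+$ the function $u^+$ differs from $G^+$ by a bounded amount, this global integrability directly dominates the offending contribution. Combining the two cases gives the finiteness, and feeding it back through the series expansion and the symmetry completes the proof. The routine parts are the series manipulation, the adjunction/iterate bookkeeping, and the limit in $k$; the one genuinely hard step is the local estimate near $I^\pm$ and its spurious/non-spurious dichotomy.
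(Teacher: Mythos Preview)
Your reduction of the gradient energy to a single integral for $\Gamma^+$ via the series expansion and Cauchy--Schwarz is correct and is exactly what the paper does (Proposition~\ref{pro:homograd}). But you have misidentified where the dichotomy in the hypotheses is actually consumed, and this leads to a genuine gap.

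The local estimate $\int d\Gamma^+\wedge d^c\Gamma^+\wedge T^-<\infty$ does \emph{not} require the spurious/non-spurious distinction: it follows from the hypothesis ``$G^-$ finite on $I^+$'' alone. The paper does this (Corollary~\ref{cor:e1}) by comparing $\Gamma^+$ with $\varphi=\log\dist(\cdot,I^+)$ and computing $\int(-\chi\circ\varphi_j)(\Omega+dd^c\varphi_j)\wedge T^-$ directly; for $\chi(t)=t$ the $\chi''$-term vanishes and the remaining piece is controlled by the spherical means of $G^-$ near $I^+$, which are bounded precisely because $G^-$ is finite there. No integration by parts ``through the singularity'' and no spuriousness enters.

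The gap is your claim that the $L^1$ clause $G^+\in L^1(\omega^+\wedge T^-)$ is soft. There is no ``usual comparison of $L^1$-norm with energy'' that produces this: Proposition~\ref{pro:grad} says precisely that $\E_\chi=\nabla_\chi\cap\{\chi\circ\varphi\in L^1(\alpha\wedge T)\}$, so the gradient bound alone gives only $G^+\in\nabla^1(T^-,\omega^+)$, not $G^+\in\E^1(T^-,\omega^+)$. And you cannot invoke Proposition~\ref{pro:alternativeT+} here, since that proposition \emph{assumes} finite dynamical energy --- exactly what you are proving. This $L^1$ step is where the dichotomy is genuinely used: under $T^+\in L^1(T^-)$ it is immediate (Remark~\ref{rqe:nrgl1}), while under ``no spurious points'' it requires real work --- the paper spends Theorem~\ref{thm:nospurs} and three lemmas on it, exploiting non-spuriousness to bound auxiliary potentials $w_n$ from below by $-cG^-$ and then bootstrapping through $\int-\chi\circ G^+\,T^+\wedge\omega$ and $\int-\chi\circ G^+\,T^-\wedge\omega$.

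A smaller point: the symmetry between $T^+$ and $T^-$ is not purely formal. Pushforward satisfies only $i\,f_*\eta\wedge\overline{f_*\eta}\le\lambda_2\,f_*(i\,\eta\wedge\bar\eta)$, so the series estimate for $G^-$ acquires an extra factor and converges only because $\lambda_1>\lambda_2$ (see the second half of Proposition~\ref{pro:homograd}). For $\chi(t)=t$ this is harmless, but it should be acknowledged.
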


We do not define ``spurious'' here (see \S \ref{sec:homogeneous}), but we do note 
that the second assumption is satisfied when, for instance, the classes of 
$T^+$ and $T^-$ are K\"ahler.  Also, when $\lambda_2 =1$ it holds
up to a birational change of surface \cite[Prop. 4.1]{bd}.  
Finiteness of $G^\pm$ on $I^\mp$ is a kind of ``avoidance'' condition on the orbits of 
$I^+$ and $I^-$ that is readily verified for many maps.  

\noindent We show in {\it section 4} that the hypotheses of Theorem \ref{theo:criterion}
are satisfied in particular for

\begin{itemize}
\item[-] polynomial maps of $\C^2$, 
\item[-] maps on $\P^1\times\P^1$ that come from the secant 
algorithm for finding roots of polynomials, 
\item[-] maps on surfaces $X$ with ${\rm kod}(X)=0$.
\end{itemize}

We also give weaker ``indeterminacy avoidance'' conditions that guarantee finite energy with respect to the 
other homogeneous weights $\chi(t)=-(-t)^p$.  By studying a birational example of Favre, whose salient feature is an invariant line on which the map $f$ acts by rotation, we show how it can happen that the weaker avoidance conditions are verified when the stronger ones are not. For birational mappings with $T^+\in L^1(T^-)$ it was proven in \cite{dg} 
that 
$$
\text{ \it finite energy with respect to the weight }
\chi(t) = t \text{ is equivalent to } \log d(\cdot, I^+) \in L^1(\mu_f).
$$
On the other hand, it follows from Theorem \ref{theo:properties} and the results of 
 \cite{birat} that, if   a birational map $f$ with $\lambda_1>1$ has finite dynamical energy then it  enjoys many interesting dynamical properties.  Thus, the examples in \S \ref{subs:irrational} show that for  birational maps, it is possible that the results of \cite{birat} hold while $\log d(\cdot, I^+) \notin L^1(\mu_f)$.  

Less formally but more suggestively, our methods allow us to prove that $\mu_f$ acts much like 
a non-uniformly hyperbolic measure 
{\it even though $\mu_f$ does not satisfy a convenient hypothesis guaranteeing existence of 
well-defined Lyapunov exponents} 
(see e.g. \cite[Section S.2]{KaHa} for background on these).

\smallskip

It is worth emphasizing that 
{\it we do not know any example violating the finite dynamical energy condition}. 
One might hope that no such examples exist.  Regardless, a rational map 
with small topological degree and infinite dynamical energy would likely 
have some very surprising properties.  For instance, we obtain the 
following dichotomy:

\begin{theo}
\label{theo:eitheror}
Let $f$ be a 1-stable meromorphic self map of a compact K\"ahler surface, 
with small topological degree.
Assume further that $T^+ \in L^1(T^-)$. Then
\begin{itemize}
\item[-] either $f$ has finite dynamical energy
\item[-] or $\mu_f=T^+ \wedge T^-$ charges the pluripolar set $\set{G^++G^-=-\infty}$.
\end{itemize}
\end{theo}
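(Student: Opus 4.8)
The plan is to exploit the explicit potential-theoretic dichotomy built into the definition of finite energy. Working with the homogeneous weight $\chi(t) = t$, one sees that $f$ has finite dynamical energy with this weight precisely when the energy integral $\int dG^+\wedge d^cG^+\wedge T^-$ is finite (and symmetrically with $+$ and $-$ exchanged). Since $T^+\in L^1(T^-)$ is assumed, the product $\mu_f = T^+\wedge T^-$ is already well-defined as $\omega^+\wedge T^- + dd^c(G^+ T^-)$, and one can run a Stokes/integration-by-parts argument on suitable regularizations to relate the mass $\int (-G^+)\,\omega^-\wedge T^+$-type quantities, the mixed energy $\int dG^+\wedge d^cG^-\wedge\omega$, and the total mass of $\mu_f$, which is normalized to $1$. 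The key identity to extract is one expressing $\int dG^+\wedge d^cG^+\wedge T^-$ (up to bounded error terms coming from the bounded potentials of $\omega^\pm$ and from $\set{T^+}\cdot\set{T^-}=1$) as essentially $-\int (G^++G^-)\,d\nu$ for an appropriate auxiliary positive measure $\nu$, or more directly in terms of $\int_X G^+\,dd^c G^-\wedge\omega$-type terms.

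First I would fix the decompositions $T^\pm = \omega^\pm + dd^c G^\pm$ with $G^\pm \le 0$, and truncate: set $G^\pm_k = \max(G^\pm, -k)$, so that $T^\pm_k := \omega^\pm + dd^c G^\pm_k$ are positive closed currents with bounded potentials decreasing to $T^\pm$. The products $T^+_k\wedge T^-_k$ are classical (Bedford--Taylor) and, by the $L^1$ hypothesis together with the monotone convergence arguments underlying Theorem \ref{theo:intersect}, converge to $\mu_f$. Next I would integrate by parts to obtain, for each $k$, an identity of the shape
\begin{equation*}
\int_X dG^+_k\wedge d^cG^+_k\wedge T^-_k = \text{(bounded terms)} - \int_X G^+_k\, dd^c G^+_k \wedge T^-_k,
\end{equation*}
and iterate/symmetrize to bring in $G^-_k$ as well, so that the divergent part of the energy as $k\to\infty$ is controlled by $\int (-G^+ - G^-)\,d\mu_f$ plus cohomologically bounded contributions. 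The normalization $\set{T^+}\cdot\set{T^-}=1$ and boundedness of the potentials of $\omega^\pm$ ensure all the "error" terms stay uniformly bounded in $k$.

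Then the dichotomy is immediate: if $\mu_f$ does not charge $\set{G^++G^-=-\infty}$, then $G^++G^-\in L^1(\mu_f)$ — here one must also check that the relevant integrals are genuinely bounded, not merely finite, using that $G^++G^-$ is quasi-psh hence bounded above and that $\mu_f$ is a probability measure — and the identity above shows the energy integrals $\int dG^\pm\wedge d^cG^\pm\wedge T^\mp$ are finite, i.e. $f$ has finite dynamical energy with weight $\chi(t)=t$; conversely if $f$ has infinite dynamical energy, the same identity forces $\int(-G^+-G^-)\,d\mu_f = +\infty$, which can only happen if $\mu_f$ gives positive mass to $\set{G^++G^-=-\infty}$. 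I expect the main obstacle to be the integration-by-parts step and the control of the boundary/error terms: one must justify the limiting procedure on the truncations $G^\pm_k$ carefully (the products $dG^+_k\wedge d^cG^+_k\wedge T^-_k$ need not converge weakly in an obvious way, and the naive integration by parts produces terms like $\int dG^+_k\wedge d^cG^-_k\wedge \omega^+$ whose finiteness is exactly what is at stake), and this is precisely the kind of delicate pluripotential-theoretic bookkeeping that the energy formalism of \cite{gz2} adapted in the proof of Theorem \ref{theo:intersect} is designed to handle — so I would lean heavily on the estimates established there rather than redoing them.
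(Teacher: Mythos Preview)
Your proposal has a genuine gap, and it stems from conflating ``finite dynamical energy'' with ``finite energy for the weight $\chi(t)=t$''. Recall that $f$ has finite dynamical energy means $G^+\in\E(T^-,\om^+)$ and $G^-\in\E(T^+,\om^-)$ for \emph{some} weight $\chi\in\W$, not necessarily the linear one. Your argument aims at the much stronger conclusion $G^\pm\in\E^1(T^\mp,\om^\pm)$, and the step that breaks is the claim that ``if $\mu_f$ does not charge $\set{G^++G^-=-\infty}$, then $G^++G^-\in L^1(\mu_f)$''. This is simply false: a function bounded above and finite $\mu_f$-a.e.\ need not be $\mu_f$-integrable. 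Likewise your converse implication, that $\int(-G^+-G^-)\,d\mu_f=+\infty$ forces $\mu_f$ to charge the $-\infty$ set, fails for the same reason. So the dichotomy you extract is not the one claimed.

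The paper's route is both shorter and correctly calibrated to the class $\E$ rather than $\E^1$. It invokes Proposition~\ref{pro:alternative} directly: under the hypothesis $G^+\in L^1(T^-)$, one has the exact identity $\mu(G^+,T^-)=\mathbf{1}_{\set{G^+>-\infty}}\,T^+\wedge T^-$, and therefore $G^+\in\E(T^-,\om^+)$ if and only if $\mu_f(\set{G^+=-\infty})=0$. By the symmetry of the $L^1$ condition the same holds with $+$ and $-$ exchanged. Since $G^\pm\leq 0$ one has $\set{G^++G^-=-\infty}=\set{G^+=-\infty}\cup\set{G^-=-\infty}$, and the dichotomy follows immediately. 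No integration by parts, no truncation estimates, and no restriction to a particular weight are needed; the point is that Proposition~\ref{pro:alternative} already gives an \emph{if and only if} characterization of membership in $\E$, whereas your energy identity only sees the subclass $\E^1$.
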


The family of birational maps given in Example \ref{exa:3lines} satisfies this dichotomy. Furthermore,
generic members of this family indeed have finite dynamical energy.

\medskip
\begin{center}
 $\diamond$
\end{center}
\medskip

The structure of the paper is as follows.
In \S \ref{sec:pluripot}, we develop the general framework for intersection of positive closed $(1,1)$ 
currents satisfying the finite energy condition.  Theorem \ref{theo:intersect} and 
a general dichotomy leading to Theorem \ref{theo:eitheror} are both established here.  
In \S \ref{sec:measure}, we consider the nature of the 
measure $\mu_f$ under the assumption that $f$ has finite dynamical energy. Theorem 
\ref{theo:properties} is the end result. 
 In \S \ref{sec:homogeneous}, we consider the particular case of 
homogeneous weights more carefully, giving several criteria for finite dynamical energy along 
the lines of Theorem \ref{theo:criterion}.  Finally,
in \S \ref{sec:examples}, we illustrate our main results by applying them to several significant 
examples, both invertible and non-invertible ones among them.

\medskip

\noindent {\it Acknowledgment.} We would like to thank A.Zeriahi for several
useful discussions.

\section{Weighted energy with respect to a current} \label{sec:pluripot}

We develop in this section the pluripotential theoretic tools that we will use to define and understand the wedge product $T^+ \wedge T^-$.  There is no dynamics here, so these results might accordingly be of independent interest.

\subsection{The class $\E(T,\alpha)$}\label{subs:class}
Throughout the section, we take $\alpha,T$ to be positive closed $(1,1)$ currents on $X$ such that $\alpha$ has bounded potentials.  We assume $\int_X \alpha \wedge T=\{\alpha\} \cdot \{ T \}=1$, the middle term denoting intersection of cohomology classes.  We let $PSH(X,\alpha) = \{u\in L^1(X):\alpha+dd^c u \geq 0\}$ denote the set of \emph{$\alpha$-plurisubharmonic} (or just \emph{$\alpha$-psh}) functions.  If $u\in PSH(X,\alpha)$ for $\alpha$ smooth, then $u$ is called \emph{quasiplurisubharmonic} (or just \emph{qpsh}).

If $u,v \in PSH(X,\alpha)$ are {\it bounded}, it follows from plurifine considerations
(see \cite{bt2}) that
\begin{equation} \label{eq:BT}
{\bf 1}_{\{u>v\}}  [\alpha+dd^c u] \wedge T
={\bf 1}_{\{u>v\}}  [\alpha+dd^c \max(u,v)] \wedge T
\end{equation}
in the sense of Borel measures.

Let $\f \in PSH(X,\alpha)$ be an unbounded function and
$\f_j:=\max(\f,-j) \in PSH(X,\alpha) \cap L^{\infty}(X)$ its
{\it canonical approximants}. We have $\f_j\searrow \f$, and
by (\ref{eq:BT}),
$$
{\bf 1}_{\{ \f>-k \}}  [\alpha+dd^c \f_j] \wedge T
={\bf 1}_{\{ \f>-k \}}  [\alpha+dd^c \f_k] \wedge T
$$
whenever $j \geq k$, since $\set{\f_j>-k}=\set{\f>-k}$ and $\max(\f_j,-k)=\f_k$.
Observe that $\set{\f>-k} \subset \set{\f>-j}$. Hence
$$
\mu_j(\f,T):={\bf 1}_{\{\f>-j\}} [\alpha+dd^c \f_j] \wedge T
$$
is an increasing sequence of Borel measures on $X$, whose total mass
is bounded from above by $1=\{ \alpha \} \cdot \{T \}$.

\begin{defi}
We set $\mu(\f,T):=\lim\nearrow \mu_j(\f,T)$ and
$$
\E (T,\alpha):=\{ \f \in PSH(X,\alpha) \, / \, \mu(\f,T)(X)=1 \}.
$$
\end{defi}

\noindent Alternatively $\f \in \E(T,\alpha)$ if and only if
$[\alpha+dd^c \f_j] \wedge T(\set{\f \leq -j}) \rightarrow 0$.
\vskip.1cm

Observe that the probability measures $[\alpha+dd^c \f_j] \wedge T$ converge to $\mu(\f,T)$ as Borel measures (i.e. in mass) 
when $\f \in \E(T,\alpha)$.  This is much stronger than convergence as Radon measures (i.e. in the weak topology) and 
furnishes the key to the next proposition.

\begin{pro} \label{pro:plp}
Assume $\f \in \E(T,\alpha)$. Then
\begin{enumerate}
\item the measures $\alpha \wedge T$ and $\mu(\f,T)$ do not charge the set $\set{\f=-\infty}$;
\item if $T$ puts no mass on a complete pluripolar set $P$, then neither does  $\mu(\f,T)$.
\end{enumerate}
\end{pro}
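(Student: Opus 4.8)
The plan is to prove both statements by exploiting the strong (mass) convergence $[\alpha+dd^c\f_j]\wedge T \to \mu(\f,T)$ that holds precisely because $\f\in\E(T,\alpha)$. The first item should follow almost directly: since $\f_j$ is bounded, $[\alpha+dd^c\f_j]\wedge T$ is a well-defined Bedford--Taylor measure, and one knows from the local pluripotential theory of \cite{bt1,bt2} that such a measure puts no mass on the pluripolar set $\{\f=-\infty\}$. More concretely, for each fixed $k$, the restriction $\mathbf{1}_{\{\f>-k\}}[\alpha+dd^c\f_k]\wedge T$ is carried by $\{\f>-k\}$, which is disjoint from $\{\f=-\infty\}$; taking $k\to\infty$ and using that $\mu(\f,T)=\lim\nearrow\mu_j(\f,T)$ is the increasing limit of these restrictions, we get $\mu(\f,T)(\{\f=-\infty\})=0$. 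For $\alpha\wedge T$ itself, one observes that $\alpha\wedge T = [\alpha+dd^c\f_j]\wedge T + dd^c(-\f_j)\wedge T$ is dominated in an appropriate sense, or more simply: $\alpha\wedge T \leq$ limit considerations show $\alpha\wedge T$ is absolutely continuous with respect to a Monge--Amp\`ere-type measure off pluripolar sets. Actually the cleanest route is to note that $\mathbf{1}_{\{\f>-k\}}\alpha\wedge T = \mathbf{1}_{\{\f>-k\}}[\alpha+dd^c\f_k]\wedge T + \mathbf{1}_{\{\f>-k\}}dd^c(-\f_k)\wedge T$, and on the plurifine open set $\{\f>-k\}$ the potential $\f_k$ is a bounded $\alpha$-psh function, so by \eqref{eq:BT} and standard arguments the measure $\alpha\wedge T$ restricted there charges no pluripolar set; letting $k\to\infty$ and using $\{\f=-\infty\}=\bigcap_k\{\f\leq -k\}$ together with $\alpha\wedge T(\{\f\le -k\})\to 0$ (which is what $\f\in\E(T,\alpha)$ gives for the bounded potentials) finishes it.

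For the second item I would argue as follows. Let $P$ be a complete pluripolar set with $T(P)=0$; we want $\mu(\f,T)(P)=0$. Since $P$ is complete pluripolar, there is (locally, or globally after using a quasi-psh function, cf.\ the global pluripotential framework of \cite{gz2}) a function $\rho\in PSH(X,\alpha)$ — after rescaling $\alpha$ if necessary — with $P\subset\{\rho=-\infty\}$. The idea is to compare $\mu(\f,T)$ against $\mu(\rho,T)$ restricted near $P$. The key inequality is a monotonicity/comparison principle: on the set where $\rho$ is very negative, the mass of $[\alpha+dd^c\f_j]\wedge T$ can be controlled by the mass of $[\alpha+dd^c\rho_j]\wedge T$ there. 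More precisely, one uses that for bounded $u,v$, the identity \eqref{eq:BT} and the comparison principle give control of $[\alpha+dd^c u]\wedge T$ on $\{u<v\}$ by $[\alpha+dd^c v]\wedge T$ on a slightly larger set; applying this with $u=\f_j$, $v=\rho_j+C$ and letting $j\to\infty$ yields $\mu(\f,T)(P)\le \mu(\rho,T)(\{\rho=-\infty\})$ up to constants. But by item (1) applied to $\rho$ — or rather, we need $\mu(\rho,T)$ itself, which requires knowing $\rho\in\E(T,\alpha)$, and that is not automatic; instead I would use directly that $[\alpha+dd^c\rho_j]\wedge T$, being a Bedford--Taylor measure for bounded potentials, charges no pluripolar set, so $[\alpha+dd^c\rho_j]\wedge T(P)=0$ for every $j$. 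Combining, $\mu(\f,T)(P)=0$.

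The main obstacle, as I see it, is the second step: making the comparison of $[\alpha+dd^c\f_j]\wedge T$ with $[\alpha+dd^c\rho_j]\wedge T$ on neighborhoods of $P$ rigorous in the global compact K\"ahler setting, where one does not have the luxury of working in a single coordinate chart and must handle the interaction of the relative potentials with the reference forms. One must be careful that the "complete pluripolar" hypothesis really does supply a global $\alpha$-psh (or qpsh) function vanishing on $P$ to $-\infty$ — this uses that $P$ is complete pluripolar, not merely pluripolar — and then that a comparison-principle estimate of the form $\mathbf{1}_{\{\f<\rho-C\}}[\alpha+dd^c\f_j]\wedge T \lesssim \mathbf{1}_{\{\f<\rho\}}[\alpha+dd^c\rho_j]\wedge T$ holds uniformly in $j$; the passage to the limit then hinges precisely on the mass-convergence granted by $\f\in\E(T,\alpha)$, so that no mass escapes to $\{\f=-\infty\}$ undetected. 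The subtlety in quantifying "no mass escapes near $P$" is where I expect the real work to lie, while items about $\alpha\wedge T$ and $\{\f=-\infty\}$ are comparatively routine plurifine bookkeeping.
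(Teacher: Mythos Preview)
Your argument that $\mu(\f,T)(\{\f=-\infty\})=0$ is fine and matches the paper. However, there are genuine gaps elsewhere.

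\textbf{Part (1), the measure $\alpha\wedge T$.} Your claim that ``$\alpha\wedge T(\{\f\le -k\})\to 0$ is what $\f\in\E(T,\alpha)$ gives for the bounded potentials'' is unjustified. The definition of $\E(T,\alpha)$ controls $[\alpha+dd^c\f_j]\wedge T(\{\f\le -j\})$, not $\alpha\wedge T(\{\f\le -j\})$; these are different measures, and the signed current $dd^c\f_j\wedge T$ does not vanish on $\{\f\le -j\}$ just because $\f_j$ is constant there. Nor can you invoke Bedford--Taylor directly to say $[\alpha+dd^c\f_j]\wedge T$ charges no pluripolar sets: $T$ itself need not have bounded potentials and may well charge pluripolar sets. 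The paper instead chooses a weight $\chi\in\W$ with $\chi\circ\f\in L^1(\mu(\f,T))$ (Lemma~\ref{lem:basicweight}) and uses Stokes' theorem:
\[
\int (-\chi)\circ\f_j\,[\alpha+dd^c\f_j]\wedge T \;=\; \int (-\chi)\circ\f_j\,\alpha\wedge T \;+\; \int \chi'\circ\f_j\,d\f_j\wedge d^c\f_j\wedge T,
\]
the last term being nonnegative. Combined with the equality of masses on $\{\f\le -j\}$ (because $\mu(\f,T)$ and $[\alpha+dd^c\f_j]\wedge T$ are both probability measures agreeing on $\{\f>-j\}$), this yields $\chi\circ\f\in L^1(\alpha\wedge T)$, hence $\alpha\wedge T(\{\f=-\infty\})=0$. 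The integration by parts is the missing idea.

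\textbf{Part (2).} Your comparison-principle sketch does not quite cohere: you want to control $\mu(\f,T)$ on $P=\{\rho=-\infty\}$, but $\f$ may be perfectly finite on $P$, so an inequality on $\{\f<\rho-C\}$ gives no information there once you truncate (for large $j$, $\rho_j=-j$ on $P$ while $\f_j=\f$, and the set $\{\f_j<\rho_j-C\}\cap P$ evaporates). The paper avoids comparison altogether. Writing $P=\{\p=-\infty\}$ with $\p$ qpsh, it first attenuates $\p$ so that $\p\in L^1(\omega\wedge T)$ (possible exactly because $T(P)=0$), then shows by an integration by parts against $\alpha=\theta+dd^c u$ (with $u$ bounded) that $\p\in L^1([\alpha+dd^c v]\wedge T)$ for \emph{every} bounded $\alpha$-psh $v$. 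In particular $[\alpha+dd^c\f_j]\wedge T(P)=0$ for each $j$, and the strong Borel convergence $\mu_j\to\mu(\f,T)$ (which you correctly identified as the key consequence of $\f\in\E(T,\alpha)$) carries this to the limit. The crucial step you are missing is this uniform integrability of $\p$ against all the approximants, obtained via Stokes rather than via any comparison of Monge--Amp\`ere masses.
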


\begin{proof}
We can suppose that $\f \leq 0$.
To simplify notation we set $\mu =\mu(\f,T)$ and $\mu_j =\mu_j(\f,T)$.
Note first that $\mu_j(\set{\f=-\infty})=0$, and therefore $\mu(\set{\f=-\infty})=0$.

Fix $\chi:\R \rightarrow \R$ a convex increasing function such that
$\chi(-\infty)=-\infty$ and $\chi \circ \f \in L^1(\mu)$ 
(see Lemma \ref{lem:basicweight} below).
\ignore{By definition $\mu(\f>-j)=\mu_j(\f>-j)$ hence 
$\mu(\f \leq -j)=\mu_j(\f \leq -j)$ since $\mu(X)=\mu_j(X)=1$.}
Since $0\geq \f_j \geq \f$ and $0\leq \mu_j\leq \mu$, we have
$$
\sup_j \int_X (-\chi) \circ \f_j \, d\mu_j \leq \int_X (-\chi) \circ \f \, d\mu<+\infty.
$$
By Stokes theorem,
$$
\int_X (-\chi) \circ \f_j \, (\a +dd^c \f_j) \wedge T = \int  (-\chi) \circ \f_j \, \alpha \wedge T
+ \int \chi' \circ \f_j d\f_j \wedge d^c \f_j \wedge T,
$$
where the rightmost term is non-negative.  
Observe that the measures $(\a+dd^c \f_j) \wedge T$ and $\mu$ have the same mass
since $\f \in {\mathcal E}(T,\a)$ and coincide in $(\f>-j)$, hence
$$
(\a+dd^c \f_j) \wedge T(\f \leq -j) =\mu(\f \leq -j).
$$
Therefore
$$
\int_{(\f \leq -j)} (-\chi) \circ \f_j \, (\a+dd^c \f_j) \wedge T
=(-\chi)(-j) \mu(\f \leq -j) 
\leq 
\int_{(\f \leq -j)} (-\chi) \circ \f \, d\mu,
$$
which yields
$$
\int_X (-\chi) \circ \f_j \, \a \wedge T \leq \int_X (-\chi) \circ \f \, d\mu<+\infty.
$$
We infer $\chi \circ \f \in L^1(\alpha \wedge T)$, hence
in particular $\alpha \wedge T(\f=-\infty)=0$.

Let $P$ be a complete pluripolar set, i.e. $P =\{ \p=-\infty \}$ for some
quasi-plurisubharmonic function $\p \leq 0$ on $X$.
Let $\om$ be a K\"ahler form. By assumption $\om \wedge T(P)=0$.
Attenuating the singularities of $\p$ if necessary (i.e. replacing $\p$
by $\chi \circ \p$ for some convex increasing function $\chi$ with slow growth
at $-\infty$ and such that still $\chi(-\infty)=-\infty$), we can assume
$\p \in L^1(\om \wedge T)$.

Write $\a=\theta+dd^c u$, where $\theta$ is smooth and $u$ is bounded. We can also
assume without loss of generality that $\om'=\theta+\om$ is K\"ahler
and $dd^c \p \geq -\om'$ (replace $\om$ by $A \om$, $A>>1$, if necessary). 

We claim that for every bounded $\a$-psh function $v$, $\p \in L^1([\a+dd^c v] \wedge T)$.
Indeed 
$$
0 \leq \int (-\p) [\a+dd^c v] \wedge T \leq
\int (-\p) \om' \wedge T+\int (u+v) (-dd^c \p) \wedge T.
$$
Now we can assume that $u+v \geq 0$ since these functions are bounded.
The conclusion follows by observing that $-dd^c \p \leq \om'$.

This shows in particular that $\mu_j=[\a+dd^c \f_j] \wedge T(P)=0$, hence
$\mu(P)=0$ since $(\mu_j)$ converges to $\mu$ in the strong sense of Borel measures.
\end{proof}

We now introduce a set of weights.

\medskip
\noindent {\bf Notation.}
{\it We let ${\mathcal W}$ be the set of all convex increasing functions
$\chi:\R \rightarrow \R$ such that $\chi(-\infty)=-\infty$
and $\chi(0)=0$.}
\medskip

A straightforward computation (see \cite{gz2}) shows that if 
$\varphi \in PSH(X,\alpha)$ and $\chi'\circ\varphi\leq 1$, then 
$\chi \circ \varphi  \in PSH(X,\alpha)$.

\medskip

\begin{lem}\label{lem:basicweight}
 Let $\mu$ be a positive measure and $u$ a measurable function which is 
bounded from above and such that $u>-\infty$ $\mu$-a.e. Then there exists 
$\chi\in  \mathcal{W}$ such that $\chi\circ u \in L^1(\mu)$.
\end{lem}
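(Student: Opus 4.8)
The plan is to reduce to the case of a probability measure and then construct $\chi$ by hand as a piecewise-linear convex function whose ``kinks'' are chosen to race off to $-\infty$ slowly enough along a level sequence of $u$. First I would normalize: since the conclusion only depends on $\mu$ through null sets and finiteness of an integral, and since $u$ is bounded above, I may subtract a constant to assume $u\leq 0$. If $\mu=0$ there is nothing to prove, so assume $0<\mu(X)<\infty$ (if $\mu$ is infinite but $\sigma$-finite one can first multiply by a bounded positive density; but in our applications $\mu$ is finite, so I will not dwell on this). The hypothesis $u>-\infty$ $\mu$-a.e. means that the sets $E_k:=\{u\leq -k\}$ satisfy $\mu(E_k)\searrow \mu(\bigcap_k E_k)=\mu(\{u=-\infty\})=0$.

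Next I would extract a subsequence and build the weight. Choose integers $0=k_0<k_1<k_2<\cdots$ with $\mu(E_{k_j})\leq 2^{-j}$, which is possible by the previous paragraph. Define $\chi:\R\to\R$ to be the continuous, increasing, convex, piecewise-linear function with $\chi(0)=0$ whose slope on the interval $[-k_{j+1},-k_j]$ equals $2^{-j}$ (so slopes decrease as we move left, giving convexity, and $\chi(-t)\to -\infty$ as $t\to\infty$ because $\sum_j 2^{-j}(k_{j+1}-k_j)$ — wait, this could converge; I need the slopes to decrease but the total drop to be infinite). The fix is standard: take slope $c_j$ on $[-k_{j+1},-k_j]$ with $c_j\searrow 0$ but $\sum_j c_j(k_{j+1}-k_j)=+\infty$, e.g. by first fixing the $c_j$ (say $c_j=1/(j+1)$) and then choosing the $k_j$ sparse enough — simultaneously with $\mu(E_{k_j})\leq 2^{-j}c_j^{-1}/(k_{j+1}-k_j)$ — a mild diagonal choice. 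Then $\chi\in\mathcal W$.

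Finally I would estimate the integral. On $\{u> -k_1\}$ we have $-\chi\circ u\leq -\chi(-k_1)<\infty$, so this part contributes at most $(-\chi(-k_1))\mu(X)<\infty$. On each annular piece $\{-k_{j+1}<u\leq -k_j\}\subset E_{k_j}$ we bound $-\chi\circ u\leq -\chi(-k_{j+1})$, and since $-\chi(-k_{j+1})=\sum_{i\le j}c_i(k_{i+1}-k_i)$, the contribution is at most $(-\chi(-k_{j+1}))\,\mu(E_{k_j})$; the choice $\mu(E_{k_j})\le 2^{-j}c_j^{-1}/(k_{j+1}-k_j)$ together with crude bounds on the partial sums $-\chi(-k_{j+1})$ makes $\sum_j (-\chi(-k_{j+1}))\mu(E_{k_j})<\infty$. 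Summing over $j$ gives $\int_X(-\chi)\circ u\,d\mu<\infty$, i.e. $\chi\circ u\in L^1(\mu)$.

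The only real subtlety — the ``main obstacle'' — is the bookkeeping of choosing the slopes $c_j$ and the cutoffs $k_j$ so that three competing demands are met at once: convexity ($c_j$ decreasing), unboundedness of $\chi$ at $-\infty$ ($\sum c_j(k_{j+1}-k_j)=\infty$), and integrability ($\sum (-\chi(-k_{j+1}))\mu(E_{k_j})<\infty$, which wants the $k_j$ to grow fast to make $\mu(E_{k_j})$ tiny, fighting against the unboundedness demand). This is resolved by first fixing $c_j\to 0$ with $\sum c_j=\infty$ (e.g. $c_j=1/(j+1)$) so that $-\chi(-k_{j+1})$ grows like $\sum_{i\le j}c_i\approx \log j$ regardless of the $k_j$'s spacing provided the $k_j$ increase by at least $1$ each step, and then choosing $k_j$ increasing fast enough that $\mu(E_{k_j})\le 2^{-j}$; then $\sum_j(\log j)\,2^{-j}<\infty$. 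I would present this as a short explicit construction rather than an abstract existence argument, since the explicitness costs nothing and clarifies the later use of the weighted energy \eqref{eq:weight}.
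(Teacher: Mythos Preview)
Your overall strategy---build a piecewise-affine $\chi\in\mathcal W$ adapted to a sequence of levels $k_j$ with $\mu(\{u\le -k_j\})$ small---is exactly what the paper does (the paper invokes the layer-cake identity $\int(-\chi\circ u)\,d\mu=\int_0^\infty\mu(\chi\circ u<-t)\,dt$ and declares the piecewise-affine construction ``straightforward'').

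There is, however, a genuine error in your final bookkeeping. You fix the \emph{slopes} $c_j=1/(j+1)$ on $[-k_{j+1},-k_j]$ and then assert that $-\chi(-k_{j+1})\approx\sum_{i\le j}c_i\approx\log j$ ``regardless of the $k_j$'s spacing.'' This is false: with those slopes one has
\[
-\chi(-k_{j+1})=\sum_{i\le j}c_i\,(k_{i+1}-k_i),
\]
which grows with the gaps $k_{i+1}-k_i$. Your inequality $-\chi(-k_{j+1})\ge\sum_{i\le j}c_i$ (valid when each gap is $\ge 1$) points the wrong way for the integrability estimate---you need an \emph{upper} bound on $-\chi(-k_{j+1})$, not a lower one. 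Since the $k_j$ are chosen only so that $\mu(E_{k_j})\le 2^{-j}$, the gaps can be enormous (for instance, if $\mu(E_k)\sim 1/\log k$ then $k_j\sim e^{2^j}$), and then $\sum_j(-\chi(-k_{j+1}))\,2^{-j}$ actually diverges. The earlier attempt with $\mu(E_{k_j})\le 2^{-j}c_j^{-1}/(k_{j+1}-k_j)$ has a circular dependency ($k_{j+1}$ is not yet chosen when you pick $k_j$), so the ``mild diagonal choice'' is not a repair as stated.

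The clean fix is to prescribe \emph{values} rather than slopes. Choose $k_j$ strictly increasing with $\mu(E_{k_j})\le 2^{-j}$ and, by passing to a further subsequence, with $k_{j+1}-k_j$ nondecreasing; set $\chi(-k_j)=-j$ and interpolate linearly. The slope on $[-k_{j+1},-k_j]$ is $1/(k_{j+1}-k_j)$, nonincreasing in $j$, so $\chi$ is convex and increasing with $\chi(0)=0$ and $\chi(-\infty)=-\infty$. On $\{-k_{j+1}<u\le -k_j\}\subset E_{k_j}$ one has $-\chi\circ u\le j+1$, whence
\[
\int_X(-\chi)\circ u\,d\mu\le -\chi(-k_1)\,\mu(X)+\sum_{j\ge 1}(j+1)\,2^{-j}<\infty.
\]
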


\begin{proof} Assume for simplicity that $u$ is negative.  From the identity 
$$
\int( \chi\circ u) d\mu = \int_0^\infty \mu(\chi\circ u<-t)dt,
$$ 
it is straightforward to construct a piecewise affine $\chi$ that suffices.
\end{proof}

\begin{pro}\label{pro:plp2}
 Assume that $T$ has the form $T= \beta+dd^c \p$, with $\beta$ a positive current 
with bounded potentials,
and $T$ puts no mass on the set $\set{\p=-\infty}$. Then $T$ does not charge pluripolar sets.

Consequently if  $\varphi\in \mathcal{E}(T,\alpha)$, then $\mu(\f,T)$ does not charge
pluripolar sets.
\end{pro}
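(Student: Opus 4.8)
The plan is to prove the first assertion ($T$ puts no mass on pluripolar sets) directly, by localizing against the canonical approximants of $\psi$, and then to deduce the second assertion from Proposition \ref{pro:plp}(2) together with the fact that pluripolar sets on a compact Kähler surface are contained in complete pluripolar ones.

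For the first assertion, fix a Kähler form $\omega$; since ``$T$ puts no mass on $P$'' means $T\wedge\omega(P)=0$ (the condition being independent of the chosen Kähler form), it suffices to show that the trace measure $T\wedge\omega$ charges no pluripolar set $P$. After subtracting a constant I may assume $\psi\le 0$. Introduce the canonical approximants $\psi_k=\max(\psi,-k)$ and the currents $T_k:=\beta+dd^c\psi_k\ge 0$; since $\beta$ has bounded potentials and $\psi_k$ is bounded, $T_k$ is a positive closed current with bounded local potentials, so $T_k\wedge\omega$ puts no mass on pluripolar sets by Bedford--Taylor theory. The point is the plurifine locality identity
\[
\mathbf{1}_{\{\psi>-k\}}\,T\wedge\omega=\mathbf{1}_{\{\psi>-k\}}\,T_k\wedge\omega ,
\]
valid because $\{\psi>-k\}$ is plurifine open and $\psi\equiv\psi_k$ on it, in the spirit of the computations following \eqref{eq:BT}. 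It gives $T\wedge\omega\bigl(P\cap\{\psi>-k\}\bigr)=T_k\wedge\omega\bigl(P\cap\{\psi>-k\}\bigr)=0$ for every $k$, hence $T\wedge\omega\bigl(P\cap\{\psi>-\infty\}\bigr)=0$ upon letting $k\to\infty$; and since $T\wedge\omega\bigl(P\cap\{\psi=-\infty\}\bigr)\le T\wedge\omega(\{\psi=-\infty\})=0$ by hypothesis, we conclude $T\wedge\omega(P)=0$.

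For the second assertion, let $P_0$ be a pluripolar set. On a compact Kähler surface every pluripolar set is contained in a complete pluripolar one (see \cite{gz2}), so there is $\rho\in PSH(X,\omega)$ with $P_0\subset\{\rho=-\infty\}=:P$. By the first assertion $T$ puts no mass on $P$, hence Proposition \ref{pro:plp}(2) shows $\mu(\varphi,T)$ puts no mass on $P$, and a fortiori none on $P_0$.

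The step I expect to require the most care is the displayed plurifine identity: it compares the possibly very singular current $T$ with its bounded truncation $T_k$, rather than two bounded currents as in \eqref{eq:BT}. I would justify it from the facts that quasi-psh functions are plurifine continuous (so that $\{\psi>-k\}$ is plurifine open), that $\psi=\psi_k$ there, and that the operator $v\mapsto dd^c v\wedge\omega$ on differences of quasi-psh functions is local for the plurifine topology \cite{bt2}; alternatively one passes to the limit $j\to\infty$ in the bounded-function identity $\mathbf{1}_{\{\psi>-k\}}T_j\wedge\omega=\mathbf{1}_{\{\psi>-k\}}T_k\wedge\omega$, valid for $j\ge k$.
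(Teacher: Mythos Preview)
Your route is genuinely different from the paper's. The paper argues globally: given a pluripolar set $P$, it invokes \cite[Thm.~7.2]{gz1} to find $u\in\PSH(X,\beta)$ with $P\subset\{u=-\infty\}$, attenuates $u$ so that $\chi\circ u\in\PSH(X,\beta)$ and $\int(-\chi)\circ u\,[\beta+dd^cu]\wedge\omega<\infty$, and then applies the comparison principle \cite[Prop.~2.5]{gz2} to bound $\int(-\chi)\circ u\,T\wedge\omega$ by finite quantities, forcing $T\wedge\omega(\{u=-\infty\})=0$. Your approximant approach is more elementary in that it avoids the comparison principle entirely and reduces directly to the bounded-potential case.

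The weak point, which you rightly flag, is the displayed plurifine identity. Neither of your two justifications quite closes it. The locality results in \cite{bt2} are stated for \emph{bounded} plurisubharmonic functions, so applying them with the unbounded $\psi$ on one side requires an additional argument. And ``passing to the limit $j\to\infty$'' in the bounded identity $\mathbf 1_{\{\psi>-k\}}T_j\wedge\omega=\mathbf 1_{\{\psi>-k\}}T_k\wedge\omega$ only uses that $T_j\wedge\omega\to T\wedge\omega$ \emph{weakly}; weak convergence of positive measures of equal total mass does not allow one to push the Borel indicator $\mathbf 1_{\{\psi>-k\}}$ through the limit, since $\{\psi>-k\}$ is not open in the usual topology. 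A clean way to fill the gap is to observe that $\psi\in L^1(\omega)$ automatically and invoke the argument of Proposition~\ref{pro:alternative} with the current $\omega$ in place of $T$: testing against the bounded qpsh functions $u_m=\max(\psi/m+1,0)$ via Lemma~\ref{lem:tilde} yields $\mu(\psi,\omega)=\mathbf 1_{\{\psi>-\infty\}}T\wedge\omega$, so under the hypothesis $T\wedge\omega$ is the increasing Borel limit of measures dominated by $T_k\wedge\omega$, and you are done. (Proposition~\ref{pro:alternative} appears later in the paper but its proof is logically independent of the present one.) Incidentally, the reference for ``pluripolar $\subset$ complete pluripolar'' should be \cite{gz1}, not \cite{gz2}.
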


\begin{proof} 
Fix a K\"ahler form $\omega$. Replacing $\beta$ with  $\beta+\omega$, we may assume 
$\{ \beta \}^2>0$.
If $P\subset X$ is locally pluripolar then by Theorem 7.2 in \cite{gz1}, 
$P \subset \{ u =-\infty \}$ for some $u \in PSH(X,\beta)$.  
Fix $\chi \in {\mathcal W}$ such that $\chi \circ \p \in L^1(T \wedge \omega)$. 

We can assume without loss of generality
that $\chi \circ u \in PSH(X,\beta)$ and 
$\int_X (-\chi) \circ u \, [\beta+dd^c u] \wedge \omega <+\infty$, replacing
$u$ by $\chi \circ (u-C)$, $C>0$, if necessary (in other words it is no loss of generality
to assume $u$ has ``small singularities'', see \cite{cgz} for 
more detail).  
The comparison
principle (see Proposition 2.5 in \cite{gz2}) then gives
$$
\int (-\chi) \circ u \, T \wedge \omega \leq
2 \int (-\chi) \circ u \, [\beta+dd^c u]  \wedge \omega
+2 \int (-\chi) \circ \p \, T \wedge \omega
<+\infty.
$$
Therefore $\chi \circ u \in L^1(T \wedge \omega)$, in particular
$T \wedge \omega(P) \leq T \wedge \omega (\set{u=-\infty})=0$.

The second statement of the proposition follows from Proposition \ref{pro:plp}.
\end{proof}

We introduce another class of $\alpha$-psh functions.

\begin{defi}
For $\chi \in {\mathcal W}$ we set
$$
\E_{\chi}(T,\alpha):=\left\{ \f \in PSH(X,\alpha) \, / \, 
\sup_{j \in \N} \int_X  (-\chi) \circ \f_j [\alpha+dd^c \f_j] \wedge T <+\infty \right\},
$$
where $\f_j:=\max(\f,-j)$ are the canonical approximants.
\end{defi}

The relationship between these classes and $\E(T,\alpha)$ is the following.

\begin{pro}\label{prop:union} We have 
$$
\E(T,\alpha)=\bigcup_{\chi \in {\mathcal W}} \E_{\chi}(T,\alpha).
$$
\end{pro}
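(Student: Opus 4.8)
The plan is to prove the two inclusions separately, the inclusion $\bigcup_\chi \E_\chi(T,\alpha)\subset \E(T,\alpha)$ being the easier one. For that direction I would take $\f\in\E_\chi(T,\alpha)$ for some $\chi\in\W$ and aim to show $\mu(\f,T)(X)=1$, equivalently (by the alternative characterization stated just after the definition of $\E(T,\alpha)$) that $[\alpha+dd^c\f_j]\wedge T(\set{\f\le -j})\to 0$. On the set $\set{\f\le -j}$ we have $\f_j=-j$, so $(-\chi)\circ\f_j=(-\chi)(-j)$ there, and hence
\[
(-\chi)(-j)\cdot [\alpha+dd^c\f_j]\wedge T(\set{\f\le -j})\le \int_X (-\chi)\circ\f_j\,[\alpha+dd^c\f_j]\wedge T\le C
\]
by the defining bound of $\E_\chi$. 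Since $\chi(-\infty)=-\infty$, we have $(-\chi)(-j)\to+\infty$, so the mass $[\alpha+dd^c\f_j]\wedge T(\set{\f\le -j})$ is at most $C/(-\chi)(-j)\to 0$, which is exactly what we need.

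For the reverse inclusion $\E(T,\alpha)\subset\bigcup_\chi\E_\chi(T,\alpha)$, I would start from $\f\in\E(T,\alpha)$, normalized so $\f\le 0$, and write $m_j:=[\alpha+dd^c\f_j]\wedge T(\set{\f\le -j})=\mu(\set{\f\le -j})$ (the equality coming from the proof of Proposition \ref{pro:plp}); by hypothesis $m_j\searrow 0$. The goal is to manufacture a single weight $\chi\in\W$ with $\sup_j\int_X(-\chi)\circ\f_j\,[\alpha+dd^c\f_j]\wedge T<+\infty$. The natural move is to decompose the integral dyadically over the level sets $\set{-(k+1)<\f\le -k}$: on each such annulus $0\le(-\chi)\circ\f_j\le(-\chi)(-k-1)$, and the measure $[\alpha+dd^c\f_j]\wedge T$ of $\set{\f\le -k}$ equals $m_k$ for $j\ge k$ (and is controlled by the total mass $1$ in general). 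Summing by parts, one bounds $\int_X(-\chi)\circ\f_j\,[\alpha+dd^c\f_j]\wedge T$ uniformly in $j$ by something like $\sum_{k\ge 0}\big((-\chi)(-k-1)-(-\chi)(-k)\big)m_k$ plus a bounded term. Since $m_k\to 0$, one can then choose the increments of $-\chi$ growing slowly enough—e.g. make $-\chi$ piecewise affine with slopes tending to $\infty$ so slowly that $\sum_k (\text{slope}_k)\,m_k<\infty$—while still forcing $\chi(-\infty)=-\infty$; convexity is arranged by taking the slopes nondecreasing, and one normalizes $\chi(0)=0$. This uses the same elementary Lemma \ref{lem:basicweight}-style construction.

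The main obstacle, I expect, is making the dyadic summation-by-parts rigorous with the correct handling of the cutoff: the measure $[\alpha+dd^c\f_j]\wedge T$ depends on $j$, so one must be careful to use that on $\set{\f>-j}$ it agrees with the stable measure $\mu$ (via \eqref{eq:BT} and the increasing-limit construction) while on $\set{\f\le -j}$ only its \emph{total} mass $m_j$ is known, not its distribution among deeper level sets. The clean way around this is to split $\int_X(-\chi)\circ\f_j\,[\alpha+dd^c\f_j]\wedge T$ as $\int_{\set{\f>-j}}$ plus $\int_{\set{\f\le -j}}$; the first integral is $\int_{\set{\f>-j}}(-\chi)\circ\f\,d\mu_j\le\int_X(-\chi)\circ\f\,d\mu$, which is finite once $\chi$ is also chosen (shrinking it further if needed, again by Lemma \ref{lem:basicweight}) so that $(-\chi)\circ\f\in L^1(\mu)$, and the second integral is exactly $(-\chi)(-j)\,m_j$, which the slow-growth choice of $\chi$ keeps bounded. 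Taking the common refinement of the two weight conditions (intersecting/minimizing the two piecewise-affine weights, which stays in $\W$) finishes the argument.
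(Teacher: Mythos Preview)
Your proposal is correct and, after your ``main obstacle'' paragraph, lands on exactly the paper's argument: split $\int_X(-\chi)\circ\f_j\,[\alpha+dd^c\f_j]\wedge T$ over $\{\f>-j\}$ and $\{\f\le -j\}$, and use $\mu_j=\mu$ on the first set together with $m_j=\mu(\{\f\le -j\})$ on the second. The one simplification you miss is that a \emph{single} weight chosen via Lemma~\ref{lem:basicweight} so that $\chi\circ\f\in L^1(\mu)$ already controls both pieces: on $\{\f\le -j\}$ one has $(-\chi)\circ\f\ge(-\chi)(-j)$, hence $(-\chi)(-j)\,m_j\le\int_{\{\f\le -j\}}(-\chi)\circ\f\,d\mu$, and the two pieces recombine to give $\int_X(-\chi)\circ\f_j\,[\alpha+dd^c\f_j]\wedge T\le\int_X(-\chi)\circ\f\,d\mu$; so your ``common refinement of two weight conditions'' and the earlier dyadic/summation-by-parts detour are unnecessary.
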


\begin{proof}
Given $\f \in \E_{\chi}(T,\alpha)$, we may assume $\f \leq 0$.  Then
$$ 
0 \leq [\alpha+dd^c \f_j] \wedge T (\set{\f \leq -j}) \leq
\frac{1}{|\chi(-j)|} \sup_k \int_X (-\chi \circ \f_k) [\alpha+dd^c \f_k] \wedge T
\rightarrow 0,
$$
since $\chi(-\infty)=-\infty$. Hence $\f \in \E(T,\alpha)$.

Conversely assume $\f \in \E(T,\alpha)$. By Proposition \ref{pro:plp} and Lemma \ref{lem:basicweight}, there is 
weight $\chi \in {\mathcal W}$ such that $\chi \circ \f \in L^1(\mu(\f,T))$. We have by definition
\begin{equation} \label{eq:defmu}
\mu(\f,T)(\set{\f>-j})=[\alpha+dd^c \f_j] \wedge T(\set{\f>-j}).
\end{equation}

Since the measures on either side have the same total mass, we infer that 
\begin{equation} \label{eq:mumass}
\mu(\f,T)(\set{\f \leq -j})=[\alpha+dd^c \f_j] \wedge T(\set{\f \leq -j}).
\end{equation}

Writing $\mu(\f)=\mu(\f,T)$ and $\alpha_{\f_j}:=\alpha+dd^c \f_j$, we use \eqref{eq:defmu} again to get
\begin{eqnarray*}
\int_X (-\chi \circ \f_j) \, \alpha_{\f_j} \wedge T
&=&  (-\chi)(-j) \int_{\set{\f \leq -j}} \, \alpha_{\f_j} \wedge T
+\int_{\set{\f>-j}} (-\chi \circ \f) \, \alpha_{\f_j} \wedge T \\
&=& (-\chi)(-j) \int_{\set{\f \leq -j}} \, d \mu(\f)
+\int_{\set{\f>-j}} (-\chi \circ \f)  \, d \mu(\f) \\
& \leq & \int_X (-\chi \circ \f) \, d \mu(\f).
\end{eqnarray*}
So $\f \in \E_{\chi}(T,\alpha)$.
\end{proof}

\subsection{Intersection of currents}
Let $\alpha,\beta,S,T$ be positive closed $(1,1)$ currents 
on $X$ such that
\begin{enumerate}
\item $\alpha,\beta$ have bounded potentials;
\item $S=\alpha+dd^c \f$ for some $\f \in PSH(X,\alpha)$;
\item $T=\beta+dd^c \p$ for some $\p\in PSH(X,\beta)$;
\item $\{\alpha\} \cdot \{\beta\}=\int_X \alpha \wedge \beta=\{S\} \cdot \{T\}=1$.
\end{enumerate}

We want to define the wedge product $S \wedge T$.  That is we want
to construct a probability measure $\mu$ such that whenever 
$\f_j \in PSH(X,\alpha) \cap L^{\infty}(X)$, 
$\p_j \in PSH(X,\beta) \cap L^{\infty}(X)$ decrease towards $\f,\p$ then
\begin{equation} \tag{\dag}
S_j \wedge T_j:=[\alpha+dd^c \f_j] \wedge [\beta \wedge dd^c \p_j] \longrightarrow \mu
\end{equation}
in the weak sense of Radon measures. Recall that the wedge product $S_j \wedge T_j$ of closed 
currents with bounded potentials is well defined thanks to the work of 
E.Bedford and A.Taylor \cite{bt1}.  
It is well known that it is not always possible to define $S\wedge T$ when the currents have unbounded 
potentials, even if $S=T$.  We show here that it is nevertheless possible in some very general situations.

\subsubsection*{The $L^{1}$ condition}

When the potential $\f$ of $S$, is integrable with respect to (the trace measure of) the current $T$, then the current 
$\f T$ is well-defined, as is therefore 
$$
\mu=S \wedge T:=\alpha \wedge T +dd^c (\f T).
$$
It is well known that the continuity property $(\dag)$ holds in this case.
This is a consequence of the following lemma, which we will need for own purposes.

\begin{lem} \label{lem:tilde}
Assume $\f \in L^1(T)$. Then for any bounded quasiplurisubharmonic function $u$,
and for any sequence $\f_j \in PSH(X,\alpha)$ decreasing to $\f$, one has
$\f_j \in L^1(T)$ and
$$
\int_X u \, [\alpha+dd^c \f_j] \wedge T \longrightarrow \int_X u \, [\alpha+dd^c \f] \wedge T.
$$
\end{lem}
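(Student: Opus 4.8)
The plan is to reduce the convergence statement to the well-understood Bedford--Taylor continuity theorem for bounded potentials, using $\f T$ as an intermediate object. Since $\f \in L^1(T)$, the current $\f T$ is a well-defined current of order zero, and $dd^c(\f T)$ makes sense; the first step is to observe that each $\f_j$ is bounded above by $\f_0$ (or by $0$ after normalization) and below by $\f$, so $\f \le \f_j \le C$ gives $\f_j \in L^1(T)$ immediately, and moreover the family $\{\f_j\}$ is dominated in $L^1(T)$ by the integrable function $|\f| + C$.

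\medskip

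Next I would write, by definition of $dd^c(\f_j T)$ as a current and integration by parts (all potentials involved being such that the manipulations are legitimate — $u$ bounded qpsh, $\f_j$ bounded $\alpha$-psh, $T$ closed positive),
\[
\int_X u\,[\alpha+dd^c\f_j]\wedge T = \int_X u\,\alpha\wedge T + \int_X \f_j\, dd^c u \wedge T,
\]
and similarly with $\f$ in place of $\f_j$; here $dd^c u \wedge T = \theta_u \wedge T - \omega_u\wedge T$ is a difference of two positive measures (writing $u$ as a difference of two bounded quasi-psh functions with smooth reference forms, or just $dd^c u = (\text{smooth}) + (\text{curvature terms})$), each of finite mass against $T$. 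Then the whole problem collapses to showing
\[
\int_X \f_j\, dd^c u \wedge T \longrightarrow \int_X \f\, dd^c u \wedge T,
\]
which is simply dominated convergence: $\f_j \searrow \f$ pointwise, $|\f_j| \le |\f| + C \in L^1(|dd^c u|\wedge T)$ since $dd^c u \wedge T$ has finite total mass and $\f \in L^1(T)$ (the trace measure of $T$ controls $dd^c u \wedge T$ up to a constant because $u$ is bounded and quasi-psh, so $-A\,\omega\wedge T \le dd^c u \wedge T \le A\,\omega\wedge T$ for suitable $A$).

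\medskip

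The one point requiring a little care — and the place I expect to spend the most effort — is justifying the integration by parts $\int u\,dd^c\f_j\wedge T = \int \f_j\,dd^c u \wedge T$ when only $u$ is bounded and $\f_j$ is bounded but $T$ has unbounded potentials. This is standard in Bedford--Taylor theory (both $u$ and $\f_j$ are bounded, so $u\,T$ and $\f_j\,T$ are well-defined currents of order zero and Stokes applies), but one must make sure the symmetry $\int u\,dd^c\f_j\wedge T = \int \f_j \,dd^c u\wedge T$ is invoked in a setting where it is licit — e.g. by first regularizing $T$ locally, or by appealing directly to the closedness of $T$ together with $u,\f_j \in L^\infty$. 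Once that identity is in hand, everything else is dominated convergence, and the convergence $\int_X u\,[\alpha+dd^c\f_j]\wedge T \to \int_X u\,[\alpha+dd^c\f]\wedge T$ follows. Note that no uniform control on $\|\f_j\|_{L^\infty}$ is available (indeed $\f$ is unbounded), which is precisely why one cannot simply quote the Bedford--Taylor continuity theorem for bounded decreasing sequences and must instead route the argument through the $L^1(T)$ hypothesis.
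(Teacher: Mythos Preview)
Your overall strategy---integrate by parts to shift $dd^c$ from $\f_j$ onto $u$, then pass to the limit in $\int\f_j\,dd^cu\wedge T$---is sound and close in spirit to what the paper does. But the justification of the limit step contains a genuine error: the inequality $dd^cu\wedge T \le A\,\omega\wedge T$ is \emph{false} in general. A bounded qpsh function satisfies $dd^cu \ge -A\omega$, giving only a lower bound; there is no companion upper bound on $dd^cu$ (for instance $dd^cu$ may be singular with respect to Lebesgue measure---take $u\rest{L}$ to be the potential of a Cantor-type measure when $T=[L]$ is a line). So you do not know a priori that $|\f|\in L^1(|dd^cu|\wedge T)$, and dominated convergence as written does not apply.

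The repair is short. Write $dd^cu\wedge T = (\gamma+dd^cu)\wedge T - \gamma\wedge T$ with $\gamma=A\omega$, a difference of two positive measures, and use \emph{monotone} convergence ($-\f_j\nearrow-\f$) on each. For $\gamma\wedge T$ the limit is finite by hypothesis. For $(\gamma+dd^cu)\wedge T$ one must check $\sup_j\int(-\f_j)(\gamma+dd^cu)\wedge T<\infty$; this follows from the very Stokes identity you wrote, since $\int(-\f_j)\,dd^cu\wedge T=\int u\,(\alpha-[\alpha+dd^c\f_j])\wedge T$ is bounded by $2\|u\|_\infty\,\{\alpha\}\cdot\{T\}$. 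The paper organizes things slightly differently and sidesteps this integrability check: it first obtains $\limsup\le$ from the upper semicontinuity of $u$ together with the weak convergence $[\alpha+dd^c\f_j]\wedge T\to[\alpha+dd^c\f]\wedge T$, and then gets $\liminf\ge$ directly from the integration-by-parts identity and the inequality $\f_j\ge\f$ against the positive measure $[\gamma+dd^cu]\wedge T$.
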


\begin{proof}
By the monotone convergence theorem $\f_jT \to \f T$ as currents.
Therefore $[\alpha+dd^c \f_j] \wedge T \rightarrow [\alpha+dd^c \f] \wedge T$ in the weak sense of 
Radon measures. Hence we are done if $u$ is {\it continuous}.
Since $u$ is upper semi-continuous, we get that 
$$
\limsup \int_X u \, [\alpha+dd^c \f_j] \wedge T \leq  \int_X u \, [\alpha+dd^c \f] \wedge T.
$$

We now use the assumption that $u \in PSH(X,\gamma)$ for some positive closed
$(1,1)$-current $\gamma$ with bounded potentials.
It follows from repeated application of Stokes theorem that
\begin{eqnarray*}
\int_X u \, [\alpha+dd^c \f_j] \wedge T &=&
\int u \, \alpha \wedge T +\int_X \f_j \, [\gamma+dd^c u] \wedge T -\int_X \f_j \gamma \wedge T \\
& \geq & \int_X u \, [\alpha+dd^c \f] \wedge T+\int_X (\f-\f_j) \gamma \wedge T.
\end{eqnarray*}
The integrations by parts are easily justified because $u$ is bounded.
By monotone convergence, $\int_X (\f-\f_j) \gamma \wedge T \rightarrow0$.  So we infer 
$$
\liminf \int_X u \, [\alpha+dd^c \f_j] \wedge T \geq  \int_X u \, [\alpha+dd^c \f] \wedge T.
$$
\end{proof}

Note that the condition $\f \in L^1(T)$ is {\it symmetric},
$\f \in L^1(T) \Leftrightarrow \p \in L^1(S)$. This follows from the Stokes theorem:
if $\omega$ is any fixed K\"ahler form, then
$$
\int_X \f T \wedge \omega=\int_X \p S \wedge \omega+
\int_X \f \beta \wedge \omega-\int_X \p \alpha \wedge \omega,
$$
where the last two integrals are  finite because qpsh functions are always integrable with 
respect to  measures of the form $\alpha\wedge \omega$ (resp. $\beta \wedge \omega$)
(see e.g. \cite{demailly}).
 
Lastly, note that if $\alpha_{1}+ dd^c\f_1$ and 
$\alpha_2+dd^c\f_2$ are two representations of the same closed positive current $S$, 
with $\alpha_{i}$ of bounded potential, then $\f_1\in L^1(T)$ iff $\f_2\in L^1(T)$. 
It therefore makes sense to write ``$S\in L^1(T)$" as a shorthand for ``$\f\in L^1(T)$ for 
some choice of $\alpha$".  Thus we have just seen that 
$$
S\in L^1(T) \Leftrightarrow T\in L^1(S).
$$ 

In the next paragraph we will give a different approach to the wedge product, 
using the energy formalism.

\subsubsection*{The energy condition}
We will show that the wedge product $S \wedge T$ can be defined 
as $\mu(\f,T)$ whenever $\f \in \E(T,\alpha)$.

\begin{pro} \label{pro:alternative}
Assume $\f \in L^1(T)$ so that the probability measure $S \wedge T$ is well defined.
Then
$$
\mu(\f,T)={\bf 1}_{\{ \f > -\infty \}} S \wedge T.
$$
Therefore $\f \in \E(T,\alpha)$ if and only if $S \wedge T(\set{\f=-\infty})=0$.
\end{pro}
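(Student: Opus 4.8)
The plan is to reduce the claimed identity to the finely open sets $\set{\f>-k}$, on which $\f$ coincides with its bounded truncation $\f_k=\max(\f,-k)$, and then let $k\to\infty$. I would start from the material already in hand. By construction the Borel measures $\mu_j(\f,T)=\1_{\set{\f>-j}}[\a+dd^c\f_j]\wedge T$ increase to $\mu(\f,T)$, none of them charges $\set{\f=-\infty}$, and hence neither does $\mu(\f,T)$; moreover, by \eqref{eq:BT}, for $j\geq k$ one has $\1_{\set{\f>-k}}[\a+dd^c\f_j]\wedge T=\1_{\set{\f>-k}}[\a+dd^c\f_k]\wedge T$, so letting $j\to\infty$ gives $\1_{\set{\f>-k}}\mu(\f,T)=\1_{\set{\f>-k}}[\a+dd^c\f_k]\wedge T$. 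On the other hand $\f\in L^1(T)$, so Lemma~\ref{lem:tilde} (applied to the sequence $\f_j$, or simply to continuous test functions) gives $[\a+dd^c\f_j]\wedge T\to S\wedge T$ weakly, all of these being probability measures. Passing to weak limits in $\mu_j(\f,T)\leq[\a+dd^c\f_j]\wedge T$ and using that $\mu_j(\f,T)$ increases, we get $\mu(\f,T)\leq S\wedge T$, and therefore $\mu(\f,T)\leq\1_{\set{\f>-\infty}}S\wedge T$ since $\mu(\f,T)(\set{\f=-\infty})=0$.

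The heart of the matter is the reverse bound, which I would derive from the plurifine locality of the wedge operator, namely
$$
\1_{\set{\f>-k}}\,S\wedge T=\1_{\set{\f>-k}}\,[\a+dd^c\f_k]\wedge T \qquad\text{for every }k .
$$
Granting this and combining with the identity $\1_{\set{\f>-k}}\mu(\f,T)=\1_{\set{\f>-k}}[\a+dd^c\f_k]\wedge T$ obtained above, we get $\1_{\set{\f>-k}}\mu(\f,T)=\1_{\set{\f>-k}}S\wedge T$; letting $k\to\infty$, using $\set{\f>-k}\nearrow\set{\f>-\infty}$ and again $\mu(\f,T)(\set{\f=-\infty})=0$, this yields $\mu(\f,T)=\1_{\set{\f>-\infty}}\mu(\f,T)=\1_{\set{\f>-\infty}}S\wedge T$, which is the assertion. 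The stated equivalence then follows at once: since $S\wedge T$ is a probability measure, $\f\in\E(T,\a)$, i.e. $\mu(\f,T)(X)=1$, is equivalent to $(S\wedge T)(\set{\f>-\infty})=1$, i.e. to $S\wedge T(\set{\f=-\infty})=0$.

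It remains to prove the locality identity, and this is where I expect the only genuine difficulty to lie. Writing $S\wedge T-[\a+dd^c\f_k]\wedge T=dd^c\big((\f-\f_k)T\big)$, the function $\f-\f_k$ is $\leq 0$, belongs to $L^1(T)$, and vanishes on the finely open set $\set{\f>-k}$, so the claim says exactly that the operator $dd^c(\cdot\, T)$ is local in the plurifine topology, even for potentials that are merely in $L^1(T)$. For the bounded truncations $\f_j$, $j\geq k$, this is precisely the consequence of \eqref{eq:BT} recalled above; the difficulty is to pass to the limit $j\to\infty$ inside the indicator $\1_{\set{\f>-k}}$, which does not follow from weak convergence alone, because $\set{\f>-k}$ is only finely open and not open for the Euclidean topology. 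I would handle this using the quasicontinuity of $\f$ with respect to the Monge-Amp\`ere capacity (off an open set of arbitrarily small capacity $\f$ is continuous, so $\set{\f>-k}$ becomes relatively open there) together with standard capacity estimates controlling the resulting error terms, in the spirit of \cite{bt2} and \cite{gz2}.
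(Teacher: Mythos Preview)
Your overall strategy is sound, and you correctly identify the crux of the matter: passing the weak limit $[\alpha+dd^c\f_j]\wedge T\to S\wedge T$ through the indicator $\1_{\{\f>-k\}}$, which amounts to a plurifine locality statement for the wedge product with the \emph{unbounded} potential $\f\in L^1(T)$. Your proposed fix via quasicontinuity and capacity estimates is the standard route to such statements, and it can be made to work, but note that in the generality of the paper $T$ is an arbitrary positive closed current, so the uniform capacity domination of the measures $[\alpha+dd^c\f_j]\wedge T$ that one needs is not entirely for free; you would have to supply it. So the proposal is essentially correct but leaves the most delicate point at the level of a sketch.

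The paper avoids this difficulty altogether by a simple device. Instead of the indicator $\1_{\{\f>-k\}}$, it tests against the \emph{bounded $\alpha$-psh} functions
\[
u_k:=\max\!\left(\tfrac{\f}{k}+1,\,0\right),
\]
which satisfy $\{u_k>0\}=\{\f>-k\}$, vanish on $\{\f\le -k\}$, and increase to $\1_{\{\f>-\infty\}}$. From the plurifine identity one gets $u_k\,[\alpha+dd^c\f_j]\wedge T=u_k\,\mu_j(\f,T)$ as measures. The right side increases to $u_k\,\mu(\f,T)$; for the left side one can invoke Lemma~\ref{lem:tilde} \emph{directly} (with $u=u_k$) to pass to the limit $j\to\infty$, yielding $u_k\,(S\wedge T)=u_k\,\mu(\f,T)$. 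Sending $k\to\infty$ and using $\mu(\f,T)(\{\f=-\infty\})=0$ finishes. The point is that replacing $\1_{\{\f>-k\}}$ by the qpsh ``smoothed indicator'' $u_k$ puts you exactly in the setting where Lemma~\ref{lem:tilde} already handles the limit, so no quasicontinuity or capacity argument is needed. Your route is more conceptual (it isolates the plurifine locality principle), while the paper's is shorter and self-contained within the tools already developed.
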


\begin{proof}Without loss of generality, assume that $\f<0$.
Let $\f_j:=\max(\f,-j)$ and $u_k:=\max(\f/k+1,0)\in PSH(X,\alpha) \cap L^{\infty}(X)$, 
where $k \leq j$ is fixed. Observe that $\set{u_k>0}=\set{\f>-k}$ and $u_k=0$ elsewhere,
thus
$$
u_k \, [\alpha+dd^c \f_j] \wedge T=
u_k {\bf 1}_{\{\f>-j\}}\, [\alpha+dd^c \f_j] \wedge T=
u_k \, \mu_j(\f,T),
$$

Letting $j \rightarrow +\infty$ we infer, by using Lemma \ref{lem:tilde}, that
$$
\langle u_k \, S \wedge T, h \rangle = \langle u_k \mu(\f,T), h \rangle,
\text{ for all } k \in \N
$$
and for any continuous test function $h$ on $X$.
Now $u_k \nearrow {\bf 1}_{\{\f>-\infty\}}$, so
$$
\langle {\bf 1}_{\{\f>-\infty \}} \, S \wedge T, h \rangle = 
\langle \mu(\f,T), h \rangle
$$
since $\mu(\f,T)$ does not charge $\set{\f=-\infty}$.
\end{proof}

Proposition \ref{pro:alternative} implies that whenever 
$S\in L^1(T)$, one has $\f \in \E(T,\alpha)$ if and only if 
$\psi\in \E(S,\beta)$, and in either case $\mu(\f,T) = \mu(\psi,S)$.
The next result gives some symmetry even without the integrability assumption.  
It follows from a monotone convergence argument that we defer until the next subsection.

\begin{thm} \label{thm:cvma}
Assume $\f \in \E(T,\alpha)$ and $\p \in \E(S,\beta)$. 
Assume moreover that $T$ does not charge pluripolar sets. Then 
$$
\mu(\f,T)=\mu(\p,S)
$$
and this measure does not charge pluripolar sets.

If, moreover, $\f_j \in PSH(X,\alpha)$, $\p_j \in PSH(X,\beta)$
decrease to $\f$, $\p$, and we set $S_j:=\alpha+dd^c \f_j$, $T_j:=\beta+dd^c \p_j$,
then we have $\f_j \in \E(T_j,\alpha), \p_j \in \E(S_j,\beta)$ and
$$
\mu(\f_j,T_j)=\mu(\p_j,S_j) \longrightarrow \mu(\f,T)=\mu(\p,S)
$$
in the weak sense of Radon measures.
\end{thm}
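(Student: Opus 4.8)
The plan is to run a \emph{double-sequence monotone convergence} argument, comparing the canonical approximants of $\f$ and of $\p$ simultaneously. Write $\f_j:=\max(\f,-j)$, $\p_k:=\max(\p,-k)$ and set $S_j:=\alpha+dd^c\f_j$, $T_k:=\beta+dd^c\p_k$ (bounded potentials). Consider the Borel measures
\[
\nu_{j,k}:=\mathbf{1}_{\{\f>-j\}}\,\mathbf{1}_{\{\p>-k\}}\,S_j\wedge T_k ,
\]
where $S_j\wedge T_k$ is the Bedford--Taylor product. This family is symmetric under interchanging the roles of $(\f,\alpha,j)$ and $(\p,\beta,k)$, and it is increasing in each index: applying the plurifine identity \eqref{eq:BT} with $u=\f_{j+1}$ and $v\equiv -j$ gives $\mathbf{1}_{\{\f>-j\}}S_{j+1}\wedge T_k=\mathbf{1}_{\{\f>-j\}}S_j\wedge T_k$, and since $\{\f>-j\}\subset\{\f>-(j+1)\}$ this yields $\nu_{j,k}\le\nu_{j+1,k}$, monotonicity in $k$ following by symmetry. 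Hence $\nu_{j,k}\nearrow\nu_\infty$, a positive measure of mass $\le 1$, and because the net is monotone both iterated limits $\lim_j\lim_k$ and $\lim_k\lim_j$ equal $\nu_\infty$, so no interchange issue arises.

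The heart of the matter is to identify these iterated limits. Fix $j$ and let $k\to\infty$: by definition $\mathbf{1}_{\{\p>-k\}}\,T_k\wedge S_j=\mu_k(\p,S_j)\nearrow\mu(\p,S_j)$; since $S_j$ has bounded potentials, $\p\in L^1(S_j)$, so Proposition \ref{pro:alternative} gives $\mu(\p,S_j)=\mathbf{1}_{\{\p>-\infty\}}\,S_j\wedge T$. Now $\f_j$ is bounded, hence trivially in $\E(T,\alpha)$, and $T$ charges no pluripolar set, so by Proposition \ref{pro:plp}(2) the measure $S_j\wedge T$ puts no mass on $\{\p=-\infty\}$; therefore $\lim_k\nu_{j,k}=\mathbf{1}_{\{\f>-j\}}\,S_j\wedge T=\mu_j(\f,T)$, and letting $j\to\infty$ gives $\nu_\infty=\mu(\f,T)$. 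The symmetric computation — fix $k$, let $j\to\infty$ using $\mu_j(\f,T_k)\nearrow\mu(\f,T_k)=\mathbf{1}_{\{\f>-\infty\}}\,S\wedge T_k$ (Proposition \ref{pro:alternative}, $\f\in L^1(T_k)$ as $T_k$ has bounded potentials), then let $k\to\infty$ with $\mathbf{1}_{\{\p>-k\}}\,S\wedge T_k=\mu_k(\p,S)$ — yields $\nu_\infty=\mathbf{1}_{\{\f>-\infty\}}\,\mu(\p,S)$. Comparing, $\mu(\f,T)=\mathbf{1}_{\{\f>-\infty\}}\,\mu(\p,S)$; since both $\mu(\f,T)$ and $\mu(\p,S)$ are probability measures (the defining property of $\E(T,\alpha)$, $\E(S,\beta)$), this forces $\mu(\p,S)(\{\f=-\infty\})=0$ and hence $\mu(\f,T)=\mu(\p,S)$. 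That this common measure charges no pluripolar set is then exactly Proposition \ref{pro:plp2}.

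For the last assertion, take first the canonical approximants: $\mathbf{1}_{\{\f>-m\}\cap\{\p>-m\}}\,S_m\wedge T_m=\nu_{m,m}\nearrow\mu(\f,T)$, so $\nu_{m,m}(X)\to 1$; since $S_m\wedge T_m$ is a probability measure, the complementary part $\mathbf{1}_{\{\f\le -m\}\cup\{\p\le -m\}}\,S_m\wedge T_m$ has mass $\to 0$, whence $S_m\wedge T_m\to\mu(\f,T)$ even in the strong (mass) sense, a fortiori weakly. For a general pair $\f_j\searrow\f$ in $PSH(X,\alpha)$, $\p_j\searrow\p$ in $PSH(X,\beta)$, I would first observe that $\f_j\ge\f$ and $T_j$ is less singular than $T$, so $\f_j\in\E(T_j,\alpha)$ and $\p_j\in\E(S_j,\beta)$ by the monotonicity of the energy classes under passing to a less singular function/current (the relative analogue of the standard Guedj--Zeriahi stability, proved via the comparison principle); the first part of the theorem then applies to each pair and gives $\mu(\f_j,T_j)=\mu(\p_j,S_j)$. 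Convergence $\mu(\f_j,T_j)\to\mu(\f,T)$ follows from the canonical case together with independence of the limit on the chosen decreasing approximants, which is the classical Bedford--Taylor argument adapted here (using Lemma \ref{lem:tilde} to pass between approximating sequences).

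The step I expect to be the main obstacle is the clean identification of the two iterated limits, i.e.\ correctly tracking which portions of the limit measures live over $\{\f=-\infty\}$ and over $\{\p=-\infty\}$: the hypotheses are asymmetric (only $T$ is assumed not to charge pluripolar sets), so one cannot symmetrize naively, and it is only at the very end, by matching total masses of two probability measures, that the residual mass on $\{\f=-\infty\}$ is shown to vanish. The secondary, more routine difficulties are justifying the energy-class monotonicity and the approximation-independence needed to pass from canonical to arbitrary decreasing sequences.
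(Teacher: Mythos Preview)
Your argument for the equality $\mu(\f,T)=\mu(\p,S)$ and for the convergence of the canonical approximants $S_m\wedge T_m$ is correct and is essentially the paper's Step~1, repackaged neatly as a monotone double sequence $\nu_{j,k}$. The identification of the two iterated limits, including the asymmetric use of the hypothesis on $T$ and the final mass-matching, is exactly how the paper proceeds.

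The gap is in the passage to \emph{arbitrary} decreasing sequences. You write that this ``follows from the canonical case together with independence of the limit on the chosen decreasing approximants \dots\ using Lemma~\ref{lem:tilde}''. But Lemma~\ref{lem:tilde} requires $\f\in L^1(T)$, which is \emph{not} assumed here (indeed, the point of the energy formalism is to go beyond the $L^1$ case). The classical Bedford--Taylor monotone convergence theorem only handles bounded potentials, so it gives $S_j^{(K)}\wedge T_j^{(K)}\to S^{(K)}\wedge T^{(K)}$ for each fixed truncation level $K$, but says nothing about interchanging the limits in $j$ and $K$. The paper closes this by proving a \emph{uniform} (in $j$) estimate $|S_j^{(K)}\wedge T_j^{(K)}(B)-S_j\wedge T_j(B)|\le C/|\chi(-K)|$, and this uniformity rests on the weighted energy inequalities (Lemmas~\ref{lem:ineqfdtal} and~\ref{lem:ineqfdtal2}, together with their corollary), which in turn use the specific weight $\chi\in\W$ for which $\f\in\E_\chi(T,\alpha)$ and $\p\in\E_\chi(S,\beta)$. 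These same lemmas are also what justify the ``energy-class monotonicity'' you invoke for $\f_j\in\E(T_j,\alpha)$. So the secondary difficulty you flag as ``routine'' is in fact the substance of the remaining proof: without the weighted energy bounds you have no mechanism to control the mass of $S_j\wedge T_j$ on the shrinking sets $\{\f_j\le -K\}\cup\{\p_j\le -K\}$ uniformly in $j$, and the diagonal argument does not close.
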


Note that the condition $\f\in \mathcal{E}(T,\alpha)$ does not depend on the choice of $\alpha$.

\begin{pro}
Assume that $S=\alpha_{1}+ dd^c\f_1=\alpha_2+dd^c\f_2$, where $\alpha_1$, $\alpha_2$ 
have bounded potentials. Then $\mu(\f_1, T) = \mu(\f_2,T)$.
In particular $\f_1\in \mathcal{E}(T,\alpha_1)$ if and only if $\f_2\in \mathcal{E}(T,\alpha_2)$.
\end{pro}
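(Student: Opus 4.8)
The plan is to reduce the statement to a plurifine--locality property of Bedford--Taylor products. First I would show that $g:=\f_1-\f_2$ is a \emph{bounded} function on $X$: since $S=\a_1+dd^c\f_1=\a_2+dd^c\f_2$ we have $dd^c g=\a_2-\a_1$, and writing locally $\a_i=dd^c u_i$ with $u_i$ bounded psh (possible because $\a_i$ has bounded potentials), the function $g-(u_2-u_1)$ is pluriharmonic, hence continuous; so $g$ is locally bounded, thus bounded on the compact $X$. Next I would record that $\mu(\cdot,T)$ and membership in $\E(T,\a)$ are insensitive to adding a constant to the potential (the canonical approximants merely get reindexed); replacing $\f_1$ by $\f_1-\|g\|_\infty$ we may therefore assume $\f_1\le\f_2$, so that $g\le 0$ is bounded.

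The heart of the argument is a local comparison carried out for each fixed $j\in\N$. Set $U:=\set{\f_1>-j}$; since locally $\f_1=(\f_1+u_1)-u_1$ is a psh function minus a bounded psh function, $\f_1$ is plurifine continuous and $U$ is plurifine open. As $\f_1\le\f_2$, on $U$ we have $\f_2\ge\f_1>-j$, so the canonical approximants satisfy $\f_{1,j}=\f_1$ and $\f_{2,j}=\f_2$ on $U$, whence $\f_{1,j}-\f_{2,j}=g$ there. The currents $S_{i,j}:=\a_i+dd^c\f_{i,j}$ are positive closed $(1,1)$ currents of class $\set S$ with bounded local potentials; choosing a smooth closed form $\theta$ in the class $\set S$ I would write $S_{i,j}=\theta+dd^c\rho_{i,j}$ with $\rho_{i,j}\in PSH(X,\theta)\cap L^\infty(X)$. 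Using $dd^c g=\a_2-\a_1$ one finds $dd^c(\rho_{1,j}-\rho_{2,j})=S_{1,j}-S_{2,j}=dd^c(\f_{1,j}-\f_{2,j}-g)$, and since $\f_{1,j}-\f_{2,j}-g\equiv 0$ on $U$ the potentials may be normalized so that $\rho_{1,j}=\rho_{2,j}$ on $U$. Then the plurifine locality of Bedford--Taylor products with $T$ (cf. \eqref{eq:BT} and \cite{bt2}) yields $\1_U[\theta+dd^c\rho_{1,j}]\wedge T=\1_U[\theta+dd^c\rho_{2,j}]\wedge T$, i.e. $\1_U\,S_{1,j}\wedge T=\1_U\,S_{2,j}\wedge T$. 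I expect this plurifine--locality step to be the main obstacle, everything around it being formal; in particular one must be careful that \eqref{eq:BT} is being applied to bounded $\theta$-psh potentials that agree on a \emph{plurifine} open set rather than an honest open set.

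To conclude, I would observe that $\mu_j(\f_1,T)=\1_U\,S_{1,j}\wedge T=\1_U\,S_{2,j}\wedge T=\1_{\set{\f_1>-j}}\,\mu_j(\f_2,T)$, the last equality because $U=\set{\f_1>-j}\subset\set{\f_2>-j}$. Letting $j\to\infty$ and using $\mu_j(\f_2,T)\nearrow\mu(\f_2,T)$ together with $\1_{\set{\f_1>-j}}\nearrow\1_{\set{\f_1>-\infty}}$ gives $\mu(\f_1,T)=\1_{\set{\f_1>-\infty}}\,\mu(\f_2,T)$. Since $\mu(\f_2,T)$ gives no mass to $\set{\f_2=-\infty}$ (each $\mu_j(\f_2,T)$ being carried by $\set{\f_2>-j}$) and $\set{\f_2=-\infty}=\set{\f_1=-\infty}$ because $g$ is bounded, this is exactly $\mu(\f_1,T)=\mu(\f_2,T)$. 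The final assertion then follows at once, since $\f\in\E(T,\cdot)$ means precisely that the associated measure has total mass $1$.
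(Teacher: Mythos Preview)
Your proof is correct and follows essentially the same plurifine-locality strategy as the paper. The paper's version is a bit leaner: rather than introducing a smooth reference form $\theta$ and global potentials $\rho_{i,j}$, it writes $\a_2=\a_1+dd^c u$ with $u$ bounded, observes that $\max(\f_2,-j)+u$ is a bounded $\a_1$-psh function agreeing with $\max(\f_1,-j+M)$ on the plurifine open set $\{\f_1>-j+M\}\subset\{\f_2>-j\}$, applies plurifine locality directly to these two $\a_1$-psh functions to obtain $\mu(\f_1,T)\le\mu(\f_2,T)$, and concludes by symmetry instead of by tracking the null set $\{\f_1=-\infty\}$.
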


\begin{proof}
Fix $u$ bounded such that $\a_2=\a_1+dd^c u$. Subtracting a constant if necessary,
we can assume $\f_1=\f_2+u$. Fix $M>0$ such that $-M \leq u \leq +M$.

Observe that $\max(\f_2,-j)+u=\max(\f_1,-j+M)$ in the plurifine open set
$\set{\f_1>-j+M} \subset \set{\f_2>-j}$. Thus
\begin{eqnarray*}
\lefteqn{
\1_{ \{\f_1>-j+M\} } [\a_1+dd^c \max(\f_1,-j+M) ] \wedge T}
\hskip2cm \\
&&\leq  \1_{ \{\f_2>-j\} } [\a_2+dd^c \max(\f_2,-j) ] \wedge T
\leq  \mu(\f_2,T).
\end{eqnarray*}

We infer $\mu(\f_1,T) \leq \mu(\f_2,T)$, whence equality by reversing the roles of
$\f_1$ and $\f_2$. In particular $\f_1\in \mathcal{E}(T,\alpha_1)$ if and only if 
$\f_2\in \mathcal{E}(T,\alpha_2)$.
\end{proof}

We already know that if both $\f_j$ and $\p_j$ are bounded, then $\mu(\f_j,T_j)=S_j \wedge T_j$.  
Hence under the hypotheses of Theorem \ref{thm:cvma}, our results now make it reasonable to set
$$
S \wedge T:=\mu(\f,T)=\mu(\p,S).
$$
Also we can write ``$S\in \mathcal{E}(T)$'' to say that $\f\in \mathcal{E}(T,\alpha)$ 
for some choice of $\alpha$. 
In summary, we have shown that if $S\in  \mathcal{E}(T)$ and $T\in \mathcal{E}(S)$, 
then there is a well-defined 
wedge product $S\wedge T$.  We stress that our definition of $S \wedge T$ applies even in some cases
where $S \notin L^1(T)$ (see \cite[\S 2.4]{gz2}). On the other hand, the case where $S$ and $T$ are transversely 
intersecting lines shows that $S\in L^1(T)$ does not mean that $S\in \mathcal{E}(T)$.

\medskip

We now discuss the hypothesis  made in
Theorem \ref{thm:cvma} that $T$ does not charge pluripolar sets. Later on we will apply  this construction to the case
where $T=T^+$ is the canonical $f^*$-invariant current associated to
a $1$-stable endomorphism $f:X \rightarrow X$ of small topological degree.
The laminar structure and extremality properties of $T^+$ will allow us
to reach the following alternative:
\begin{itemize}
\item[-] either $T^+$ does not charge pluripolar sets, 
\item[-] or $T^+$ is supported on a complete pluripolar set.
\end{itemize}

It is therefore important to notice that the latter possibility
can not occur under the finite energy assumption.

\begin{pro} \label{pro:degen}
Assume $T=\beta+dd^c \p$ is supported on $(\p=-\infty)$
and $\f \in {\mathcal E}(T,\alpha)$. Then $\mu(\p,S)=0$, hence
in particular $\p \notin {\mathcal E}(S,\beta)$.
\end{pro}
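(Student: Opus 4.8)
The plan is to show directly that $\mu(\psi,S)=0$; the stated conclusion $\psi\notin\mathcal E(S,\beta)$ is then immediate, since a measure of total mass $0$ cannot have mass $1$, which is what membership in $\mathcal E(S,\beta)$ would require. So the whole content is the vanishing $\mu(\psi,S)=0$.

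Recall that $\mu(\psi,S)=\lim\nearrow_j\mu_j$ with $\mu_j:=\mathbf 1_{\{\psi>-j\}}\,[\beta+dd^c\psi_j]\wedge S$ and $\psi_j:=\max(\psi,-j)$; since the $\mu_j$ increase, it suffices to prove $\mu_j=0$ for each fixed $j$. Write $T_j:=\beta+dd^c\psi_j$, which has bounded potentials, so that $T_j\wedge S$ is a well-defined positive measure by Bedford--Taylor. The conceptual point is that \emph{on the plurifine open set $U:=\{\psi>-j\}$ the current $T_j$ agrees with $T$, and $T$ carries no mass there}. To make this precise one approximates $S$ by $S_k:=\alpha+dd^c\max(\varphi,-k)$, which has bounded potentials, and compares $T_j$ with the bounded-potential currents $T_m:=\beta+dd^c\max(\psi,-m)$ for $m\ge j$: since $\max(\psi,-m)=\max(\psi,-j)=\psi$ on $U$, plurifine locality of the Bedford--Taylor product (cf. \eqref{eq:BT} and \cite{bt2}) gives $\mathbf 1_U(T_m\wedge S_k)=\mathbf 1_U(T_j\wedge S_k)$ for all such $m$. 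Letting $m\to\infty$ and using continuity of the product along the decreasing sequence $\max(\psi,-m)\searrow\psi$ (legitimate because $S_k$ has bounded potentials), together with the fact that plurifine open sets are continuity sets for these monotone limits, we obtain $\mathbf 1_U(T_j\wedge S_k)=\mathbf 1_U(T\wedge S_k)$. Now $T$ is carried by $\{\psi=-\infty\}$ and hence puts no mass on $U\subset\{\psi>-\infty\}$; since wedging with a current of bounded potentials does not spread the trace measure beyond the set carrying the other factor, $T\wedge S_k$ is again carried by $\{\psi=-\infty\}$, so $\mathbf 1_U(T\wedge S_k)=0$. Thus $\mathbf 1_U(T_j\wedge S_k)=0$ for every $k$, and letting $k\to\infty$ (now using that $T_j$ has bounded potentials, and again that $U$ is a continuity set) yields $\mathbf 1_U(T_j\wedge S)=0$, i.e. $\mu_j=0$. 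Taking $j\to\infty$ gives $\mu(\psi,S)=0$.

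The step deserving the most care — and the one I expect to be the main obstacle — is the passage from bounded to unbounded potentials in the locality statement: namely, the assertion that the Bedford--Taylor product creates no mass on $\{\psi>-j\}$, even though both factors $S$ and $T$ may have unbounded potentials. The strategy above is designed to isolate this difficulty by only ever invoking plurifine locality and ``no spreading of mass'' for products $T_m\wedge S_k$ of currents with \emph{bounded} potentials, where these are classical \cite{bt2}, and then pushing through monotone limits. Alternatively, at the cost of an extra hypothesis one could shortcut part of the argument via Proposition~\ref{pro:alternative}: if one knew $\psi\in L^1(S)$, then $\mu(\psi,S)=\mathbf 1_{\{\psi>-\infty\}}\,T\wedge S$, and $T\wedge S=\beta\wedge S+dd^c(\psi S)$ is carried by $\{\psi=-\infty\}$ simply because $T$ is. Finally I note that the hypothesis $\varphi\in\mathcal E(T,\alpha)$ does not appear to enter this argument; it is the standing assumption under which the construction and Theorem~\ref{thm:cvma} operate, and the point of the present proposition is exactly to exclude the degenerate alternative in which the invariant current $T=T^+$ lives on a complete pluripolar set.
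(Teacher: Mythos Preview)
Your argument follows essentially the same route as the paper's proof: approximate $S$ by $S_k=\alpha+dd^c\max(\varphi,-k)$, use plurifine locality together with the fact that $T\wedge S_k$ is carried by $\{\psi=-\infty\}$ to get $\mathbf 1_{\{\psi>-j\}}\,T_j\wedge S_k=0$, and then let $k\to\infty$. But there is a genuine gap in this last step, and it is precisely where the hypothesis $\varphi\in\mathcal E(T,\alpha)$ enters.

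The passage from $\mathbf 1_U(T_j\wedge S_k)=0$ for all $k$ to $\mathbf 1_U(T_j\wedge S)=0$ is not justified by what you write. Since $T_j$ has bounded potentials and $\varphi\in L^1(T_j)$, one knows only that $T_j\wedge S_k\to T_j\wedge S$ \emph{weakly} as Radon measures; but $U=\{\psi>-j\}$ is merely plurifine open, not open in the usual topology, and there is no reason for it to be a continuity set for the limit measure. Weak convergence gives you nothing on such a Borel set. The paper uses the hypothesis $\varphi\in\mathcal E(T,\alpha)$ exactly here: by Lemma~\ref{lem:ineqfdtal2} it yields $\varphi\in\mathcal E(T_j,\alpha)$, and this upgrades the convergence $T_j\wedge S_k\to\mu(\varphi,T_j)=T_j\wedge S$ (the last equality by Proposition~\ref{pro:alternative}) to convergence \emph{in the strong sense of Borel measures}. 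Only then may one restrict to the Borel set $\{\psi>-j\}$. So your closing remark that the hypothesis ``does not appear to enter'' is mistaken; without it the argument breaks at the very step you flagged as least problematic.

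A secondary point: the assertion that ``wedging with a current of bounded potentials does not spread the trace measure beyond the set carrying the other factor'' is exactly the content of Lemma~\ref{lem:folklore}, and its proof (approximating the bounded psh factor by smooth ones and controlling the error via a Cauchy--Schwarz estimate) is not entirely routine. You should not treat it as a one-line fact.
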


\begin{proof}
Assume $\f \in {\mathcal E}(T,\alpha)$. It follows from Lemma \ref{lem:ineqfdtal2}
that $\f \in {\mathcal E}(T_j,\a)$, where $T_j=\beta+dd^c \p_j$,
$\p_j=\max(\p,-j)$.  

Observe that by definition of $\mu(\f,T_j)$, the measures $T_j \wedge S_k$
converge (in the strong sense of Borel measures) towards $\mu(\f,T_j)$,
as $k \rightarrow +\infty$. Here $S_k=\a+dd^c \f_k$, where $\f_k=\max(\f,-k)$.

Now $\mu(\f,T_j)=T_j \wedge S$, as follows from Proposition \ref{pro:alternative}.
We infer
$$
{\bf 1}_{\{\p>-j\}} T_j \wedge S_k \stackrel{k \rightarrow +\infty}{\longrightarrow}
{\bf 1}_{\{\p>-j\}} T_j \wedge S.
$$
Observe finally that 
${\bf 1}_{\{\p>-j\}} T_j \wedge S_k={\bf 1}_{\{\p>-j\}} T \wedge S_k=0$
if $T$ is supported on $(\p=-\infty)$. 
The latter equality follows from lemma \ref{lem:folklore} below.
Thus
${\bf 1}_{\{\p>-j\}} T_j \wedge S=0$, hence $\mu(\p,S)=0$.
\end{proof}

The following result is probably known to experts in pluripotential theory.
Since we could not find a reference,
we include a proof.

\begin{lem} \label{lem:folklore}
Assume $T=dd^c \p \geq 0$ is a positive closed $(1,1)$ current
in the unit ball ${\bf B} \subset \C^2$, which gives full mass to  $\set{\p=-\infty}$.
Then so does the measure $T \wedge dd^c u$, for any
locally bounded plurisubharmonic function $u$.
\end{lem}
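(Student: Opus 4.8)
The plan is to reduce the statement to a local computation about the behavior of the Monge-Amp\`ere mass of $u$ along the polar set of $\p$. Since the claim is local and we may shrink the ball, I would first observe that without loss of generality $\p \leq 0$ and $u$ is bounded. The key point is the plurifine locality of Bedford-Taylor products: writing $\p_j := \max(\p,-j)$ and $T_j := dd^c \p_j$, equality \eqref{eq:BT} (applied here with $\alpha = 0$ and the current $dd^c u$ in place of $T$) gives $\1_{\{\p > -j\}} T_j \wedge dd^c u = \1_{\{\p > -j\}} T \wedge dd^c u$, so it suffices to show that $T_j \wedge dd^c u(\{\p > -j\}) \to 0$ as $j \to \infty$, i.e. that the sequence of bounded currents $T_j$ has Monge-Amp\`ere products against $dd^c u$ that escape to the polar set of $\p$.

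The main step is a mass estimate. I would use the fact that $T$ gives full mass to $\{\p = -\infty\}$, so $T(\{\p > -j\}) \to 0$, and try to ``transfer'' this to the wedge with $dd^c u$ by Stokes/integration by parts, dominating $\int_{\{\p > -j\}} T_j \wedge dd^c u$ by a quantity controlled by $T(\{\p > -j\})$ (possibly after a Chern-Levine-Nirenberg type inequality on a slightly smaller ball, using that $u$ is bounded and $\p_j$ is bounded). Concretely, I expect to pick a cutoff $\theta$ supported in $\bb$ with $\theta \equiv 1$ on a smaller ball $\bb'$, and bound $\int_{\bb'} \1_{\{\p>-j\}}\, dd^c\p_j \wedge dd^c u$ using that on the plurifine-open set $\{\p > -j\}$ one has $\p_j = \p$ is still ``very negative'' near $\{\p = -\infty\}$, so one can integrate by parts to replace $dd^c\p_j$ by $dd^c u$ paired against $\p_j$, then use $\1_{\{\p > -j\}}|\p_j| \leq$ something of size going to $0$ in $L^1(dd^c u \wedge dd^c u)$-sense, or alternatively run the argument used to prove Proposition~\ref{pro:plp} with a suitable weight $\chi \in \W$ for which $\chi \circ \p \in L^1(dd^c u \wedge \om)$ and conclude that $\chi \circ \p \in L^1(\,dd^c u$-mass\,), forcing the mass on $\{\p = -\infty\}$ of $dd^c u \wedge (dd^c\text{-something})$ to be full.

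The cleanest route is probably the weight argument: by Lemma~\ref{lem:basicweight} choose $\chi \in \W$ with $\chi \circ \p \in L^1(dd^c u \wedge \om)$ on a neighborhood; then, since $\chi' \circ \p \leq 1$ may be arranged, $\chi \circ \p$ is again psh (up to the ambient form), and a Stokes-theorem computation exactly parallel to the one in the proof of Proposition~\ref{pro:plp} shows $\chi \circ \p \in L^1$ with respect to $(dd^c \p_j) \wedge dd^c u$ uniformly in $j$, hence $(dd^c\p_j \wedge dd^c u)(\{\p = -\infty\}) = 0$ is impossible only if... — more precisely, it shows the masses $T_j \wedge dd^c u(\{\p \leq -j\})$ do \emph{not} tend to $0$, equivalently $\p \in \E(dd^c u, 0)$ fails, which combined with the total-mass normalization gives that $\lim \nearrow T_j \wedge dd^c u$ puts all its mass on $\{\p = -\infty\}$. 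The main obstacle I anticipate is the uniform (in $j$) integrability estimate: one must integrate by parts in a setting where $u$ is merely bounded (not continuous), so the integrations by parts need to be justified with some care, and one must make sure the boundary terms coming from the cutoff $\theta$ stay bounded — this is where the local Chern-Levine-Nirenberg inequality and the boundedness of both $u$ and $\p_j$ do the work.
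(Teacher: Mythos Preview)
Your proposal has a genuine gap. The plurifine reduction is circular: the identity $\1_{\{\p>-j\}} T_j \wedge dd^c u = \1_{\{\p>-j\}} T \wedge dd^c u$ you quote means that showing $T_j \wedge dd^c u(\{\p>-j\}) \to 0$ is literally the statement of the lemma, so introducing the $\p_j$ buys nothing. Worse, the weight argument points the wrong way: a Stokes computation mimicking Proposition~\ref{pro:plp} would (modulo boundary terms) give $\sup_j \int (-\chi\circ\p_j)\, T_j \wedge dd^c u < \infty$, hence by Chebyshev $T_j \wedge dd^c u(\{\p \leq -j\}) \to 0$, the \emph{opposite} of what you need. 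You seem to sense this (``the masses \dots do \emph{not} tend to $0$''), but you give no mechanism for that negation, and none is available: there is no total-mass normalization in the local setting (the cohomological identity $\int_X[\alpha+dd^c\p_j]\wedge T = \{\alpha\}\cdot\{T\}$ has no analogue on ${\bf B}$), so you cannot trade mass on $\{\p>-j\}$ against mass on $\{\p\leq -j\}$. A Chern--Levine--Nirenberg inequality is likewise unhelpful, since it bounds masses on compact sets, not on the non-open sets $\{\p>-j\}$.

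The idea you are missing is to convert the set-theoretic hypothesis into an identity of currents. The paper sets $\r := e^{\p}$, a bounded psh function with $\{\r>0\} = \{\p>-\infty\}$, so that ``$T$ gives full mass to $\{\p=-\infty\}$'' becomes simply $\r T = 0$, and the goal becomes $\r\mu = 0$ for $\mu = T \wedge dd^c u$. When $u$ is smooth this is immediate since $\r\mu = (\r T)\wedge dd^c u$. For bounded psh $u$ one approximates by smooth $u_j \searrow u$, so that $\r\mu_j = 0$, and must show $\r\mu_j \to \r\mu$; since $\r$ is only upper semicontinuous, weak convergence $\mu_j \to \mu$ does not suffice. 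The paper handles this by further approximating $\r$ by smooth psh $\r_{\e} \searrow \r$ and proving $(\r_{\e}-\r)\mu_j \to 0$ uniformly in $j$ via Cauchy--Schwarz and the energy estimate $\int d(\r_{\e}-\r)\wedge d^c(\r_{\e}-\r)\wedge T \to 0$.
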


\begin{proof}
This is a local question; we can assume all our objects are defined
in a small neighborhood of $\overline{{\bf B}}$.
Set $\r:=e^{\p}$. This is a bounded psh function such that
$\set{\r >0}=\set{\p>-\infty}$ and $\set{\r=0}=\set{\p=-\infty}$. By assumption
$\r T =0$ and we need to prove that $\r \mu=0$, where
$\mu=T \wedge dd^c u$, $u \in PSH({\bf B}) \cap L_{loc}^{\infty}({\bf B})$.

If $u$ is smooth, this easily follows from the identity
$$
\langle \r \mu, \chi \rangle =\langle \r T, \chi dd^c u \rangle
$$
valid for any test function $\chi$.

For the general case we approximate $u$ by a decreasing sequence
of smooth psh functions $u_j$. Set $\mu_j:=T \wedge dd^c u_j$.
Since 
$$
\langle \r \mu_j, \chi \rangle =\langle \r T, \chi dd^c u_j \rangle=0,
$$
it suffices to show that the measures $\r \mu_j$ converge, in the weak sense
of Radon measures, towards $\r \mu$.
This is obvious if $\r$ is continuous. 
Let $\r_{\e}$ denote a sequence of smooth psh functions decreasing to $\r$.
Since 
$$
\lim_{j \rightarrow +\infty}\langle \r_{\e} \mu_j, \chi \rangle 
=
\langle \r_{\e} \mu, \chi \rangle,
$$
for fixed $\e>0$, it suffices to show that 
$\r_{\e} \mu_j \rightarrow \r \mu_j$ as $\e \rightarrow 0$ 
uniformly with respect to $j$.

Using a ``max-construction'', we can assume without loss of generality that
$\r_{\e} \equiv \r \equiv u_j \equiv u \equiv ||z||^2 -1$ near
$\partial {\bf B}$. This allows us to integrate by parts, since all these functions
vanish on $\partial {\bf B}$.
Let $\chi$ be a test function, then
\begin{eqnarray*}
\left| \langle \r_{\e} \mu_j, \chi \rangle -\langle \r \mu_j, \chi \rangle \right|
&\leq& 
||\chi||_{{\mathcal C}^2} \langle (\r_{\e}-\r) dd^c u_j, T \rangle \\
&=& C_{\chi} \int d(\r_{\e}-\r) \wedge d^c u_j \wedge T \\
&\leq& C_{\chi}
\left( \int d(\r_{\e}-\r) \wedge d^c (\r_{\e}-\r) \wedge T \right)^{1/2}
\left( \int d u_j \wedge d^c u_j \wedge T \right)^{1/2},
\end{eqnarray*}
as follows from Cauchy-Schwarz inequality.

The latter integral is uniformly bounded from above,
$$
\int d u_j \wedge d^c u_j \wedge T=
\int (-u_j) \wedge dd^c u_j \wedge T 
\leq ||u||_{L^{\infty}} \int dd^c u_j \wedge T \leq M_0,
$$
while the next to last converges to zero,
$$
\int d(\r_{\e}-\r) \wedge d^c (\r_{\e}-\r) \wedge T
\leq \int (\r_{\e}-\r) dd^c \r \wedge T \longrightarrow 0,
$$
as follows from the monotone convergence theorem.
\end{proof}

\medskip

\subsection{Proof of Theorem \ref{thm:cvma}}

We start with two useful inequalities.

\begin{lem} \label{lem:ineqfdtal}
Fix $\chi \in {\mathcal W}$ and let $u,v \in PSH(X,\alpha) \cap L^{\infty}(X)$ be such
that $u \leq v \leq 0$. Then
$$
0 \leq \int_X (-\chi \circ v) \, [\alpha+dd^c v] \wedge T
\leq 2 \int_X (-\chi \circ u) \, [\alpha+dd^c u] \wedge T.
$$
\end{lem}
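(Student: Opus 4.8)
The plan is to reduce the inequality to an integration-by-parts (Stokes) computation, exploiting that $u,v$ are bounded $\alpha$-psh functions with $u\le v\le 0$. First I would observe that $\chi\circ v$ and $\chi\circ u$ are themselves $\alpha$-psh when $\chi'\le 1$; but since $\chi$ is only assumed convex increasing with $\chi(0)=0$, I would first treat the normalized case $\chi'\le 1$ (which suffices by the remark preceding the lemma, or by rescaling $\chi$ and noting both sides scale the same way). The left inequality $0\le\int(-\chi\circ v)[\alpha+dd^cv]\wedge T$ is immediate since $-\chi\circ v\ge 0$ and $[\alpha+dd^cv]\wedge T$ is a positive measure (this uses that $v$ is bounded $\alpha$-psh, so the Bedford--Taylor product with $T$ makes sense).

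For the substantive right-hand inequality, I would write $\alpha=\gamma+dd^c g$ with $\gamma$ smooth and $g$ bounded, set $w:=\chi\circ v$, and expand $\int(-w)[\alpha+dd^cv]\wedge T$ using Stokes' theorem to move the $dd^c$ off of $v$. The standard manipulation gives
$$
\int_X(-\chi\circ v)[\alpha+dd^cv]\wedge T
=\int_X(-\chi\circ v)\,\alpha\wedge T+\int_X \chi'\circ v\,dv\wedge d^cv\wedge T,
$$
both terms nonnegative. The goal is then to dominate each of these by $2\int_X(-\chi\circ u)[\alpha+dd^cu]\wedge T$. For the first term, since $\chi$ is increasing and $u\le v$ we have $-\chi\circ v\le -\chi\circ u$, so $\int(-\chi\circ v)\alpha\wedge T\le\int(-\chi\circ u)\alpha\wedge T\le\int(-\chi\circ u)[\alpha+dd^cu]\wedge T$. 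For the gradient term I would use the convexity of $\chi$ together with the inequality $u\le v$: a standard trick (cf. \cite{gz2}) is to compare $\int\chi'\circ v\,dv\wedge d^cv\wedge T$ with $\int\chi'\circ u\,du\wedge d^cu\wedge T$ via the elementary pointwise/plurifine estimate coming from $d(\chi\circ v)\wedge d^c(\chi\circ v)=(\chi'\circ v)^2\,dv\wedge d^cv$ and integration by parts against $T$, absorbing the cross terms using Cauchy--Schwarz and the bound $\chi'\le 1$.

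The main obstacle I anticipate is the gradient (energy) term: controlling $\int\chi'\circ v\,dv\wedge d^cv\wedge T$ by the analogous quantity for $u$ when only $u\le v$ is known (not $v\le u$) requires care, since naively the gradient of the larger function need not be smaller. The resolution is to run the Stokes computation on $\chi\circ v$ rather than on $v$ directly: write $\int(-\chi\circ v)[\alpha+dd^c(\chi\circ v)]\wedge T\ge 0$, expand it, and use $\alpha+dd^c(\chi\circ v)\ge 0$ together with $\chi\circ u\le\chi\circ v\le 0$ to land the cross-terms on the right side; the factor $2$ is exactly what is needed to absorb one such cross-term via Cauchy--Schwarz (this is the compact analogue of the comparison-of-energies lemma in \cite{gz2}, Lemma 2.3 there). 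Once the bounded-potential case and the normalization $\chi'\le 1$ are handled, the general $\chi\in\mathcal W$ follows by replacing $\chi$ with $\e\chi$ for small $\e$ and noting both sides of the desired inequality are homogeneous of degree one in $\chi$.
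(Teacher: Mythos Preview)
Your overall plan---reduce to an integration-by-parts computation in the spirit of \cite[Lemma~2.3]{gz2}---is exactly what the paper does (the paper in fact gives no details beyond that reference). Your treatment of the left inequality and of the term $\int(-\chi\circ v)\,\alpha\wedge T$ is fine.

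However, your handling of the gradient term is both more complicated than necessary and, as written, incomplete. The normalization $\chi'\le 1$ and the appeal to Cauchy--Schwarz are red herrings: no Cauchy--Schwarz is needed, and the inequality holds for arbitrary $\chi\in\mathcal W$ without rescaling. The suggestion to ``run Stokes on $\chi\circ v$'' and use $\alpha+dd^c(\chi\circ v)\ge 0$ does not lead to the desired bound; expanding $\int(-\chi\circ v)[\alpha+dd^c(\chi\circ v)]\wedge T\ge 0$ gives a \emph{lower} bound on the energy term, not an upper bound, and there is no cross term there to absorb. The actual argument (this is the computation in \cite{gz2}) is cleaner: first use $-\chi\circ v\le -\chi\circ u$ to get
\[
\int(-\chi\circ v)\,\alpha_v\wedge T\le\int(-\chi\circ u)\,\alpha_v\wedge T
=\int(-\chi\circ u)\,\alpha_u\wedge T+\int(v-u)\,(-dd^c(\chi\circ u))\wedge T,
\]
then observe $-dd^c(\chi\circ u)\le \chi'\circ u\,\alpha$ (drop the nonpositive $-\chi''\circ u\,du\wedge d^cu$ and use $-dd^c u\le\alpha$), and finally apply the elementary convexity inequality $(v-u)\,\chi'\circ u\le (-u)\,\chi'\circ u\le -\chi\circ u$ (from $\chi(0)-\chi(u)\ge\chi'(u)(0-u)$). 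This bounds the last integral by $\int(-\chi\circ u)\,\alpha\wedge T\le\int(-\chi\circ u)\,\alpha_u\wedge T$, yielding the factor $2$ directly---no Cauchy--Schwarz, no assumption on $\chi'$.
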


The proof is a simple integration by parts (see Lemma 2.3 in \cite{gz2} for similar
computation). It will follow from this lemma
 that in the definition of the class $\E_{\chi}(X,\alpha)$,
one can replace the canonical approximants by {\it any} sequence of bounded
$\alpha$-psh functions decreasing towards $\f$.

\medskip

Fix $\gamma \geq 0$ a positive closed current with bounded potentials.
For this lemma we use the notation $\alpha_{\f}:=\alpha+dd^c \f$ and $\gamma_u:=\gamma+dd^c u$.

\begin{lem} \label{lem:ineqfdtal2}
Fix $\chi \in {\mathcal W}$ and $0 \geq \f \in PSH(X,\alpha) \cap L^{\infty}(X)$.
Let $u \leq v$ be two  $\gamma$-psh functions. Then
$$
0 \leq \int_X (-\chi \circ \f) \, \alpha_{\f} \wedge \gamma_v
\leq 2 \int_X (-\chi \circ \f) \, \alpha_{\f} \wedge \gamma_u+ \chi'(0) \int_X [v-u] \alpha^2.
$$
\end{lem}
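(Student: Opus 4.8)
The plan is to read $\int_X(-\chi\circ\f)\,\alpha_\f\wedge\gamma_v$ (with $\alpha_\f:=\alpha+dd^c\f$ a positive closed $(1,1)$ current with bounded potentials, so all wedge products are well defined) as an ``energy of the fixed bounded function $\f$ against the varying current $\gamma_v$'', to write $v=u+w$ with $w:=v-u\ge 0$, and to push everything onto $w$ by Stokes' theorem. First I would reduce to the case where $u,v$ are bounded, by a routine truncation: replace $u,v$ by $\max(u,-m),\max(v,-m)$ and let $m\to\infty$, using monotone convergence on each term together with the a priori bound $\int\Phi\,\alpha_\f\wedge\gamma_v\ge 0$; subtracting a constant I may also assume $v\le 0$, so that $0\le w=v-u$ is bounded and, since $0\ge\f$ and $\chi$ is convex increasing, $0\le\chi'\circ\f\le\chi'(0)$. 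Writing $\Phi:=-\chi\circ\f\ge 0$, the lower bound $0\le\int\Phi\,\alpha_\f\wedge\gamma_v$ is trivial, and Stokes' theorem gives
$$
\int_X\Phi\,\alpha_\f\wedge\gamma_v=\int_X\Phi\,\alpha_\f\wedge\gamma_u+\int_X w\,dd^c\Phi\wedge\alpha_\f ,
$$
since $dd^c w=\gamma_v-\gamma_u$. So everything reduces to estimating the last integral.

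For that I would use $dd^c\Phi=-\chi'(\f)\,dd^c\f-\chi''(\f)\,d\f\wedge d^c\f\le -\chi'(\f)\,dd^c\f\le\chi'(\f)\,\alpha$ (the first step because $\chi''\ge 0$ and $d\f\wedge d^c\f\ge 0$, the second because $dd^c\f\ge-\alpha$ and $\chi'(\f)\ge 0$), hence $dd^c\Phi\wedge\alpha_\f\le\chi'(\f)\,\alpha\wedge\alpha_\f$ as measures, so (using $w\ge 0$)
$$
\int_X w\,dd^c\Phi\wedge\alpha_\f\le\int_X w\,\chi'(\f)\,\alpha\wedge\alpha_\f=\int_X w\,\chi'(\f)\,\alpha^2+\int_X w\,\chi'(\f)\,\alpha\wedge dd^c\f ,
$$
and the first term on the right is $\le\chi'(0)\int_X w\,\alpha^2$. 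The heart of the argument is the second term $\int_X w\,\chi'(\f)\,\alpha\wedge dd^c\f$: here I would invoke the current inequality $\chi'(\f)\,dd^c\f\le dd^c(\chi\circ\f)$ (again because $\chi''\ge 0$), wedge with $\alpha\ge 0$ and integrate against $w\ge 0$ to get $\int w\,\chi'(\f)\,\alpha\wedge dd^c\f\le\int w\,\alpha\wedge dd^c(\chi\circ\f)$, then integrate by parts: $\int w\,\alpha\wedge dd^c(\chi\circ\f)=\int(\chi\circ\f)\,\alpha\wedge dd^c w=\int(\chi\circ\f)\,\alpha\wedge\gamma_v-\int(\chi\circ\f)\,\alpha\wedge\gamma_u$. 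Since $\chi\circ\f\le 0$ the $\gamma_v$-integral is $\le 0$ and may be dropped, leaving $\int w\,\chi'(\f)\,\alpha\wedge dd^c\f\le-\int(\chi\circ\f)\,\alpha\wedge\gamma_u=\int\Phi\,\alpha\wedge\gamma_u$. Finally $\int\Phi\,\alpha\wedge\gamma_u\le\int\Phi\,\alpha_\f\wedge\gamma_u$, because the difference equals $\int\Phi\,dd^c\f\wedge\gamma_u=\int\chi'(\f)\,d\f\wedge d^c\f\wedge\gamma_u\ge 0$ (Stokes and positivity of $d\f\wedge d^c\f\wedge\gamma_u$). Chaining these bounds gives
$$
\int_X\Phi\,\alpha_\f\wedge\gamma_v\le\int_X\Phi\,\alpha_\f\wedge\gamma_u+\chi'(0)\int_X w\,\alpha^2+\int_X\Phi\,\alpha_\f\wedge\gamma_u=2\int_X\Phi\,\alpha_\f\wedge\gamma_u+\chi'(0)\int_X(v-u)\,\alpha^2 ,
$$
which is the assertion.

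The main obstacle to anticipate is exactly the term $\int\Phi\,\alpha\wedge\gamma_u$ produced after the Stokes step: it is an energy of $\f$ against $\gamma_u$ but with the ``wrong'' reference current $\alpha$ in place of $\alpha_\f$, and there is no way to bound it by the clean quantity $\chi'(0)\int(v-u)\alpha^2$ alone. The trick $\chi'(\f)\,dd^c\f\le dd^c(\chi\circ\f)$ followed by integration by parts is precisely what converts $\int w\,\chi'(\f)\,\alpha\wedge dd^c\f$ into something dominated by $\int\Phi\,\alpha_\f\wedge\gamma_u$, i.e. by one further copy of the right-hand side — which is what the factor $2$ is there to absorb. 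The only remaining care is the legitimacy of the integrations by parts; after the reduction to bounded $u,v$ (and recalling that $\f$ is bounded and that $\alpha,\alpha_\f,\gamma_u,\gamma_v$ are positive closed, with $\alpha,\alpha_\f$ of bounded potentials) these are all standard, and one may assume $\chi$ smooth by approximation so that the chain rule $dd^c(\chi\circ\f)=\chi'(\f)\,dd^c\f+\chi''(\f)\,d\f\wedge d^c\f$ holds literally.
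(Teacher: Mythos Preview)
Your proof is correct and follows essentially the same route as the paper: Stokes to reduce to $\int w\,dd^c\Phi\wedge\alpha_\f$, the inequality $dd^c\Phi\le\chi'(\f)\,\alpha$, the splitting $\alpha_\f=\alpha+dd^c\f$, the key step $\chi'(\f)\,dd^c\f\le dd^c(\chi\circ\f)$ followed by a second integration by parts, and finally $\int\Phi\,\alpha\wedge\gamma_u\le\int\Phi\,\alpha_\f\wedge\gamma_u$. The only differences are cosmetic: you add an explicit reduction to bounded $u,v$ and a remark on approximating $\chi$ by smooth functions, which the paper leaves implicit.
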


\begin{proof}
It follows from Stokes theorem that
$$
\int (-\chi \circ \f) \, \alpha_{\f} \wedge \gamma_v=
\int (-\chi \circ \f) \, \alpha_{\f} \wedge \gamma_u+ \int (v-u) \alpha_{\f} \wedge [-dd^c \chi \circ \f].
$$
Observe that $-dd^c \chi \circ \f \leq \chi' \circ \f \, \alpha$, thus the latter integral
is bounded from above by $I=\int (v-u) \chi' \circ \f \alpha_{\f} \wedge \alpha$. Now
$$
\chi' \circ \f \, \alpha_{\f} \leq \chi' \circ \f \, \alpha_{\f}+\chi'' \circ \f \, d\f \wedge d^c \f
=\chi' \circ \f \, \alpha+dd^c (\chi \circ \f),
$$
so we may estimate the integral $I$
\begin{eqnarray*}
I &\leq& \int (-\chi \circ \f) \, \alpha \wedge dd^c(u-v) +\int (v-u) \chi' \circ \f \, \alpha^2 \\
&\leq& \int (-\chi \circ \f) \, \alpha \wedge \gamma_u +\chi'(0) \int (v-u) \alpha^2.
\end{eqnarray*}
The conclusion follows by observing that
$$
\int (-\chi \circ \f) \, \alpha_\f \wedge \gamma_u=
\int (-\chi \circ \f) \, \alpha \wedge \gamma_u+
\int \chi' \circ \f \, d\f \wedge d^c \f \wedge \gamma_u
\geq \int (-\chi \circ \f) \, \alpha \wedge \gamma_u.
$$
\end{proof}

\begin{proof}[Proof of Theorem \ref{thm:cvma}]
Without loss of generality we can assume that  $\f,\p \leq 0$.

\smallskip

\noindent{\em Step 1}.
Assume that $\f_j:=\max(\f,-j)$ and $\p_j:=\max(\p,-j)$ are the canonical approximants.
We first show that the measures $S_j \wedge T_j$ converge to $\mu(\f,T)$.
Recall that by definition,
$$
\mu(\f,T)=\lim \nearrow {\bf 1}_{\{ \f>-j \} } S_j \wedge T.
$$
Fix $N \in \N$. It follows from (\ref{eq:BT}) that for all $j \geq N$,
$$
S_j \wedge T_j \geq {\bf 1}_{\{ \p>-N \} } S_j \wedge T_j
={\bf 1}_{\{ \p>-N \} } S_j \wedge T.
$$
Let $\sigma$ be a cluster point of the sequence $(S_j \wedge T_j)$.
We infer
$$
\sigma \geq \lim_{N \rightarrow +\infty} {\bf 1}_{\{ \p >-N\}} \mu(\f,T)
={\bf 1}_{\{ \p >-\infty \}} \mu(\f,T)=\mu(\f,T),
$$
since $\mu(\f,T)$ does not charge the pluripolar set 
$\set{\p=-\infty}$, as follows from Proposition 
\ref{pro:plp} (because we assume $T$ does not charge pluripolar sets).
Since both $\sigma$ and $\mu(\f,T)$ are probability measures,
it follows that $\sigma=\mu(\f,T)$.

\medskip

We now show that $\mu(\f,T)=\mu(\p,S)$.
Observe first that
$$
{\bf 1}_{\{\f>-j \} \cap \{ \p>-j \} } S_j \wedge T_j
\longrightarrow \mu(\f,T),
$$
where the convergence holds in the strong sense of Borel measures.
Recall that $\mu(\p,S)=\lim \nearrow {\bf 1}_{\{ \p>-j \} } S \wedge T_j$.
Now ${\bf 1}_{\{\f>-j \} } S \wedge T_j={\bf 1}_{\{\f>-j \} } S_j \wedge T_j$, hence
$$
{\bf 1}_{\{ \f>-j \} }
\left[ {\bf 1}_{\{ \p>-j \} } S \wedge T_j \right]=
{\bf 1}_{\{\f>-j \} \cap \{ \p>-j \} } S_j \wedge T_j.
$$

We infer ${\bf 1}_{\{ \f>-\infty \}} \mu(\p,S)=\mu(\f,T)$.
Since these are both probability measures, we conclude that  $\mu(\p,S)=\mu(\f,T)$ 
and this measure does not charge pluripolar sets (by Proposition \ref{pro:plp}).

\medskip

\noindent{\em Step 2.} In the sequel we set $\mu=\mu(\f,T)=\mu(\p,S)$ and
we fix $\chi \in {\mathcal W}$ such that $\f \in \E_{\chi}(T,\alpha)$ and
$\p \in \E_{\chi}(S,\beta)$.
We  show  that
$$
\chi \circ \f_j \, S_j \wedge T_j \longrightarrow \chi \circ \f \mu
$$
in the strong sense of Borel measures.

Let $B$ be a Borel subset of $X$. We leave the reader check that
$\int_B \chi \circ \f_j S_j \wedge T$ converges to $\int_B \chi \circ \f d\mu$ as $j\cv \infty$.
It then suffices to verify that
$\int_B \chi \circ \f_j \, S_j \wedge (T-T_j) \rightarrow 0$.
It follows  from (\ref{eq:BT}) that
$$
\1_{\{\p>-j \}} \chi \circ \f_j S_j \wedge T_j 
=\1_{\{\p>-j \}} \chi \circ \f_j S_j \wedge T.
$$
Since
$$
\int_{B \cap \{\p \leq -j\}} |\chi| \circ \f_j S_j \wedge T
\leq \int_{\{\p \leq -j\}} |\chi| \circ \f_j S_j \wedge T
\rightarrow \int_{\{\p =-\infty\}} |\chi| \circ \f d \mu=0,
$$
we will be done if we can show that
$\int_{\{\p \leq -j\}} |\chi| \circ \f_j S_j \wedge T_j \rightarrow 0$.

>From Lemmas \ref{lem:ineqfdtal} and \ref{lem:ineqfdtal2}, we obtain $M_\chi\in\R$ such that 
$$
\int (-\chi) \circ \f_j \, S_j \wedge T_j \leq M_{\chi}, \qquad \int (-\chi) \circ \p_j \, S_j \wedge T_j \leq M_\chi\quad\text{ for all } j\in\N.
$$
Choose another weight $\tilde{\chi} \in {\mathcal W}$, such that the same uniform bound hold, 
and such that   moreover   $\chi=o(\tilde{\chi})$ and $\chi/\tilde\chi$ is increasing 
(to find such a $\tilde \chi$, choose $\tilde \chi$ first and then $\chi$!). 
 We conclude that 
$$
\int_{\set{\p \leq -j}} |\chi| \circ \f_j S_j \wedge T_j \leq
M_{\tilde{\chi}} \left| \frac{\chi}{\tilde{\chi}}(-j) \right| \rightarrow 0,
$$
as desired.

>From {\it Step 2} we immediately obtain the following generalization of Lemma \ref{lem:ineqfdtal}.

\begin{cor}\label{cor:ineqfdtal}
Let $\chi \in {\mathcal W}$, $u\in \mathcal{E}_{\chi}(T,\alpha)$ and $v\in PSH(X,\alpha)$   such
that $u \leq v \leq 0$. Then  $v\in \mathcal{E}_{\chi}(T,\alpha)$ and 
$$
0 \leq \int_X (-\chi \circ v) \, [\alpha+dd^c v] \wedge T
\leq 2 \int_X (-\chi \circ u) \, [\alpha+dd^c u] \wedge T.
$$
\end{cor}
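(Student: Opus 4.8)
The plan is to deduce everything from Lemma \ref{lem:ineqfdtal} by a straightforward approximation. First I would introduce the canonical approximants $u_j:=\max(u,-j)$, $v_j:=\max(v,-j)$; these belong to $PSH(X,\alpha)\cap L^\infty(X)$, decrease respectively to $u$ and $v$, and still satisfy $u_j\le v_j\le0$. Lemma \ref{lem:ineqfdtal} applied to the pair $u_j\le v_j$ gives, for every $j$,
$$
0\le\int_X(-\chi\circ v_j)\,[\alpha+dd^c v_j]\wedge T\le 2\int_X(-\chi\circ u_j)\,[\alpha+dd^c u_j]\wedge T .
$$
Because $u\in\E_\chi(T,\alpha)$, the right-hand side is $\le 2M$ for every $j$, where $M:=\sup_k\int_X(-\chi\circ u_k)[\alpha+dd^c u_k]\wedge T<+\infty$; hence so is the middle term, and since $v\in PSH(X,\alpha)$ is part of the hypothesis, this is precisely the assertion that $v\in\E_\chi(T,\alpha)$.

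It then remains to let $j\to\infty$ in the displayed inequality and to recognize the limits as $\int_X(-\chi\circ v)[\alpha+dd^c v]\wedge T$ and $2\int_X(-\chi\circ u)[\alpha+dd^c u]\wedge T$, where $[\alpha+dd^c w]\wedge T$ stands for $\mu(w,T)$ when $w\in\E(T,\alpha)$. The one point needing justification is the convergence $\int_X(-\chi\circ w_j)[\alpha+dd^c w_j]\wedge T\to\int_X(-\chi\circ w)\,d\mu(w,T)$ for $w\in\E_\chi(T,\alpha)$ with $w\le0$; this is exactly the fact left to the reader in Step 2 of the proof of Theorem \ref{thm:cvma}, and I would establish it as in Proposition \ref{prop:union}: since $w\in\E(T,\alpha)$ the measures $[\alpha+dd^c w_j]\wedge T$ and $\mu(w,T)$ have the same (unit) mass, so by \eqref{eq:defmu}
$$
\int_X(-\chi\circ w_j)\,[\alpha+dd^c w_j]\wedge T=(-\chi)(-j)\,\mu(w,T)(\set{w\le-j})+\int_{\set{w>-j}}(-\chi\circ w)\,d\mu(w,T) .
$$
Here the last term increases to $\int_X(-\chi\circ w)\,d\mu(w,T)$ by monotone convergence, using that $\mu(w,T)$ does not charge $\set{w=-\infty}$ (Proposition \ref{pro:plp}); combined with the uniform bound $M$ this forces $\chi\circ w\in L^1(\mu(w,T))$, whence the first term satisfies $(-\chi)(-j)\,\mu(w,T)(\set{w\le-j})\le\int_{\set{w\le-j}}(-\chi\circ w)\,d\mu(w,T)\to0$, as the tail of a convergent integral.

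Applying this convergence to $w=u$ and to $w=v$ — legitimate now that $v\in\E_\chi(T,\alpha)$ is known — and letting $j\to\infty$ in the inequality of Lemma \ref{lem:ineqfdtal} produces the desired estimate; the leftmost inequality is immediate since $-\chi\circ v\ge0$ and $\mu(v,T)\ge0$. I do not foresee any serious obstacle: the whole argument is a limiting procedure layered on Lemma \ref{lem:ineqfdtal}, and the only subtlety is the logical ordering — the convergence fact can be invoked for $v$ only after $v\in\E_\chi(T,\alpha)$ has been proved, which is why that membership is dealt with first, using nothing beyond Lemma \ref{lem:ineqfdtal} and the hypothesis on $u$.
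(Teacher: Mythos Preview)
Your proposal is correct and is essentially the approach the paper has in mind: the paper derives the corollary ``from Step 2'', whose key ingredient is precisely the convergence $\int_X(-\chi\circ w_j)\,[\alpha+dd^c w_j]\wedge T\to\int_X(-\chi\circ w)\,d\mu(w,T)$ for $w\in\E_\chi(T,\alpha)$ (left to the reader there), and you supply exactly this via the decomposition already used in Proposition \ref{prop:union}. Your care about the logical ordering---first establishing $v\in\E_\chi(T,\alpha)$ from Lemma \ref{lem:ineqfdtal} before invoking the convergence for $v$---is well placed.
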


\medskip

\noindent{\em Step 3}. 
Let $\f_j,\p_j \leq 0$ denote now {\it arbitrary} sequences of $\alpha$-psh, $\beta$-psh functions decreasing towards $\f,\p$.  From Lemma \ref{lem:ineqfdtal2} and Corollary \ref{cor:ineqfdtal}, we infer that  $\f_j \in \E_{\chi}(T_j,\alpha)$ and $\p_j \in \E_{\chi}(S_j,\beta)$.  Thus the measures $\mu(\f_j,T_j)=\mu(\p_j,S_j)$ are well defined. We set
$$
\f_j^{(K)}:=\max(\f_j,-K), \;
\p_j^{(K)}:=\max(\p_j,-K), \;
\f^{(K)}:=\max(\f,-K)
$$
and $\p^{(K)}:=\max(\p,-K)$. Similarly
$$
S_j^{(K)}:=\alpha+dd^c \f_j^{(K)}, \; 
T_j^{(K)}:=\beta+dd^c \p_j^{(K)}, \;
S^{(K)}=\alpha+dd^c \f^{(K)}
$$
and $T^{(K)}=\beta+dd^c \p^{(K)}$. 

It follows from {\it Step 1} that $S^{(K)} \wedge T^{(K)} \rightarrow \mu$
and $S_j^{(K)} \wedge T_j^{(K)} \rightarrow S_j \wedge T_j$ when $K \rightarrow +\infty$.
It follows from the monotone convergence theorem of \cite{bt1}
that for each fixed $K$,
$$
S_j^{(K)} \wedge T_j^{(K)} \longrightarrow S^{(K)} \wedge T^{(K)}
\text{ as } j \rightarrow +\infty.
$$
Thus we will be done if we can prove that the convergence
$S_j^{(K)} \wedge T_j^{(K)} \rightarrow S_j \wedge T_j$ is uniform with respect
to $j$. This is what we  show now.

Fix  a Borel subset $B \subset X$. Observe that
$\set{\f_j \leq -K} \cup \set{\p_j \leq -K} \subset \set{u_j \leq -K}$, where $u_j:=\f_j+\p_j$.
Fixing $\chi \in {\mathcal W}$ as at the end of {\it Step 2}, we have from convexity of $\chi$ that
$$
\sup_j \int_X (-\chi \circ u_j) \, S_j \wedge T_j \leq \sup_j \int_X (-\chi \circ \f_j) \, S_j \wedge T_j +
\sup_j\int_X (-\chi \circ \p_j) \, S_j \wedge T_j
\leq 2M_{\chi}.
$$
It follows again from (\ref{eq:BT}) that $S_j^{(K)} \wedge T_j^{(K)} \equiv S_j \wedge T_j$
in the plurifine open set $\set{\f_j>-K} \cap \set{\p_j >-K}$, thus
$$
\left| S_j^{(K)} \wedge T_j^{(K)}(B)-S_j \wedge T_j(B) \right|
\leq \int_{\set{u_j \leq -K}} \left[ S_j^{(K)} \wedge T_j^{(K)}+S_j \wedge T_j \right].
$$
Lemma \ref{lem:ineqfdtal2} and Corollary \ref{cor:ineqfdtal} imply that $\int (-\chi) \circ u_j^{(K)} S_j^{(K)} \wedge T_j^{(K)}$ is bounded above by $4M_{\chi} + C$.  Hence
$$
\left| S_j^{(K)} \wedge T_j^{(K)}(B)-S_j \wedge T_j(B) \right| 
\leq \frac{6 M_{\chi}+C}{|\chi(-K)|}
$$
converges to zero as $K \rightarrow +\infty$, uniformly with respect to $j$.
\end{proof}

\subsection{The gradient approach}

We now give an alternative description of the finite energy conditions in terms
of integrability properties of weighted gradients (in the spirit of \cite{bd} who
considered the special case $\chi(t)=t$ in what follows). 
\vskip.1cm

Recall that any function $\f \in PSH(X,\alpha)$ has gradient in $L^{2-\e}(X)$ for any $\e>0$,
but $\nabla \f \notin L^2(X)$ in general. Indeed, $\f$ has gradient in $L^2$ if and only if $\f\in L^1(\alpha_{\f})$, where we write $\alpha_\f = \alpha+dd^c\f$ as before. More generally when $\alpha_{\f}:=\alpha+dd^c \f$
does not charge the set $\set{\f=-\infty}$, there exists $\chi \in {\mathcal W}$ such that
$\chi \circ \f \in L^1(\alpha_{\f})$.  Hence
$$
\int_X \chi' \circ \f \, d\f \wedge d^c \f \wedge \omega
=\int_X (-\chi \circ \f) \, dd^c \f \wedge \omega
\leq \int_X (-\chi \circ \f) \, \alpha_\f \wedge \omega<+\infty,
$$
where $\omega$ is a fixed K\"ahler form.
We get that  $\f$ has weighted gradient in $L^2(X)$.

This suggests that we can give an alternative description of our energy conditions
in terms of weighted gradients. 

\begin{defi}
For $\chi \in {\mathcal W}$ we set
$$
\nabla_{\chi}(T,\alpha):=\left\{ \f \in PSH(X,\alpha) \, / \, 
\sup_{j \geq 0} \int_X \chi' \circ \f_j d\f_j \wedge d^c \f_j \wedge T <+\infty \right\},
$$
where $\f_j:=\max(\f,-j)$ are the canonical approximants.
\end{defi}

\begin{pro} \label{pro:grad}
Fix $\chi \in {\mathcal W}$. Then
$$
\E_{\chi}(T,\alpha)=\left\{ \f \in \nabla_{\chi}(T,\alpha) \, / \, \chi \circ \f \in L^1(\alpha \wedge T) \right\}
$$
\end{pro}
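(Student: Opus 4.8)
The plan is to prove the two inclusions of the claimed identity
$$
\E_{\chi}(T,\alpha)=\left\{ \f \in \nabla_{\chi}(T,\alpha) \, / \, \chi \circ \f \in L^1(\alpha \wedge T) \right\}
$$
by exploiting the basic Stokes identity relating the weighted Monge-Amp\`ere mass of a bounded approximant to its weighted gradient energy. Concretely, for $\f_j:=\max(\f,-j)$ bounded $\alpha$-psh and $\alpha_{\f_j}:=\alpha+dd^c\f_j$, integration by parts gives
$$
\int_X (-\chi)\circ\f_j\,\alpha_{\f_j}\wedge T = \int_X (-\chi)\circ\f_j\,\alpha\wedge T + \int_X \chi'\circ\f_j\,d\f_j\wedge d^c\f_j\wedge T,
$$
which is exactly the decomposition already used in the proof of Proposition \ref{pro:plp}. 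Both terms on the right are nonnegative (the first because $\chi\le 0$ when $\f\le 0$, which we may assume after subtracting a constant and rescaling $\chi$; the second because $\chi'\ge 0$ and $d\f_j\wedge d^c\f_j\wedge T\ge 0$).

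First I would prove the inclusion ``$\subseteq$''. Suppose $\f\in\E_{\chi}(T,\alpha)$, so $\sup_j \int_X(-\chi)\circ\f_j\,\alpha_{\f_j}\wedge T =: M<+\infty$. From the identity above, both $\int_X(-\chi)\circ\f_j\,\alpha\wedge T$ and $\int_X\chi'\circ\f_j\,d\f_j\wedge d^c\f_j\wedge T$ are bounded by $M$ uniformly in $j$. The gradient bound is precisely the statement $\f\in\nabla_{\chi}(T,\alpha)$. For the integrability $\chi\circ\f\in L^1(\alpha\wedge T)$: since $\f_j\searrow\f$ and $\chi$ is increasing, $(-\chi)\circ\f_j\nearrow(-\chi)\circ\f$ pointwise, so monotone convergence gives $\int_X(-\chi)\circ\f\,\alpha\wedge T = \lim_j \int_X(-\chi)\circ\f_j\,\alpha\wedge T\le M<+\infty$. (Here I should note that $\f\in\E_{\chi}(T,\alpha)\subseteq\E(T,\alpha)$ by Proposition \ref{prop:union}, so by Proposition \ref{pro:plp} the measure $\alpha\wedge T$ does not charge $\set{\f=-\infty}$, ensuring $\chi\circ\f$ is finite $\alpha\wedge T$-a.e. and the integral makes literal sense.)

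Conversely, for ``$\supseteq$'', suppose $\f\in\nabla_{\chi}(T,\alpha)$ with $\chi\circ\f\in L^1(\alpha\wedge T)$. Then $\int_X(-\chi)\circ\f_j\,\alpha\wedge T\le\int_X(-\chi)\circ\f\,\alpha\wedge T<+\infty$ uniformly in $j$ (again using $(-\chi)\circ\f_j\le(-\chi)\circ\f$), and $\int_X\chi'\circ\f_j\,d\f_j\wedge d^c\f_j\wedge T$ is bounded by hypothesis. Adding the two via the Stokes identity yields $\sup_j\int_X(-\chi)\circ\f_j\,\alpha_{\f_j}\wedge T<+\infty$, i.e. $\f\in\E_{\chi}(T,\alpha)$.

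The main obstacle is the careful justification of the integration by parts (Stokes) identity in the presence of the current $T$ with possibly unbounded potential: one must argue that for bounded $\alpha$-psh $\f_j$ the manipulation $dd^c((-\chi)\circ\f_j)\wedge T$ and the product rule $d((-\chi)\circ\f_j)\wedge d^c\f_j = -\chi'\circ\f_j\,d\f_j\wedge d^c\f_j$ hold at the level of Borel measures. Since $\f_j$ is bounded and $(-\chi)\circ\f_j$ is again bounded (and $\alpha$-psh after the normalization $\chi'\le 1$, cf. the remark preceding Lemma \ref{lem:basicweight}, or more simply quasi-psh), this falls within the Bedford--Taylor theory \cite{bt1} for wedge products of bounded potentials against the closed positive current $T$; I would cite this and the analogous computations in \cite{gz2} (e.g. the proof of Lemma \ref{lem:ineqfdtal2}) rather than redo it. A minor additional point worth a sentence: independence of the condition from the choice of bounded approximating sequence — replacing the canonical approximants $\f_j$ by arbitrary bounded decreasing $\f_j'\searrow\f$ gives the same class, which follows from Lemma \ref{lem:ineqfdtal} and Corollary \ref{cor:ineqfdtal}, so the characterization is intrinsic.
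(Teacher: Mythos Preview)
Your proof is correct and follows exactly the same approach as the paper: the paper's entire proof consists of writing down the single integration-by-parts identity
\[
\int (-\chi \circ \f_j)\,\alpha_{\f_j}\wedge T
= \int (-\chi \circ \f_j)\,\alpha\wedge T + \int \chi'\circ\f_j\,d\f_j\wedge d^c\f_j\wedge T
\]
and leaving the (straightforward) deduction of both inclusions to the reader. Your version simply spells out those deductions and the justification of the identity in more detail.
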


\begin{proof}
This follows from integrating by parts,
$$
\int (-\chi \circ \f_j) \, \alpha_{\f_j} \wedge T
=\int (-\chi \circ \f_j) \, \alpha \wedge T+
\int \chi' \circ \f_j \,d\f_j \wedge d^c \f_j \wedge T.
$$
\end{proof}

\begin{rqe}
\label{rqe:nrgl1}
If $\chi'$ is bounded above, then to verify the condition $\chi\circ \f\in L^1(\alpha\wedge T)$ in Proposition \ref{pro:grad}, it suffices to show simply that $\chi\circ \f\in L^1(T)$.
\end{rqe}

\begin{proof}
Since $\alpha\geq 0$ has bounded potentials, we have $\alpha \leq c\omega + dd^c u$ where $u$ is bounded and, without loss of generality, positive.  Thus,
$$
0 \leq \int -\chi\circ \varphi\,\alpha\wedge T \leq c\int -\chi\circ \varphi\, \omega \wedge T + \int -u\,dd^c(\chi\circ\varphi)\wedge T.
$$
The assertion therefore follows from the bound
$
-u\,dd^c(\chi\circ\varphi) \leq -u(\chi'\circ\varphi) dd^c\varphi\leq \norm{u\chi'}_\infty\omega^+.
$

\end{proof}

\subsection{Examples}

We give here simple criteria which ensure that some of our energy conditions
are satisfied.  We let $\E^1(T,\alpha)$ denote the class $\E_{\chi}(T,\alpha)$
for the weight $\chi(t)=t$. Observe that
$$
\E^1(T,\alpha)=\bigcap_{\chi \in {\mathcal W}} \E_{\chi}(T,\alpha).
$$

\begin{pro}
If $\f \in PSH(X,\alpha)$ is bounded, then $\f \in \E^1(T,\alpha)$.
\end{pro}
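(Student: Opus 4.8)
The plan is to exploit that, for a bounded $\f$, the canonical approximants $\f_j:=\max(\f,-j)$ stabilize and stay uniformly bounded, so that the supremum defining $\E^1(T,\alpha)=\E_\chi(T,\alpha)$ with $\chi(t)=t$ is trivially finite. Set $M:=\sup_X|\f|<+\infty$. For every integer $j\ge M$ one has $\f_j=\f$, while for the remaining (finitely many) indices $j<M$ one still has $-M\le \f_j\le M$; hence $|\f_j|\le M$ for all $j$.

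The one point worth recording is that all the wedge products in sight are legitimate: since $\f_j$ is bounded, the current $\alpha+dd^c\f_j$ has bounded local potentials, so $[\alpha+dd^c\f_j]\wedge T$ is the Bedford--Taylor product \cite{bt1}, a positive measure whose total mass equals the cohomological intersection number $\{\alpha+dd^c\f_j\}\cdot\{T\}=\{\alpha\}\cdot\{T\}=1$ (use $\int_X dd^c\f_j\wedge T=0$ by Stokes, valid because $\f_j$ is bounded). Consequently, for every $j$,
$$
\int_X (-\chi\circ\f_j)\,[\alpha+dd^c\f_j]\wedge T
=\int_X (-\f_j)\,[\alpha+dd^c\f_j]\wedge T
\le \int_X |\f_j|\,[\alpha+dd^c\f_j]\wedge T\le M.
$$
Taking the supremum over $j$ gives a finite quantity, which is precisely the assertion that $\f\in\E_\chi(T,\alpha)$ for $\chi(t)=t$, i.e. $\f\in\E^1(T,\alpha)$.

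There is no genuine obstacle here; the only subtlety is the observation above that boundedness of the approximants is exactly what makes $[\alpha+dd^c\f_j]\wedge T$ well-defined with the expected mass $1$. Alternatively, one may first reduce to the case $\f\le 0$, as is done repeatedly throughout this section, in which case every integrand $-\chi\circ\f_j=-\f_j$ is non-negative and the same bound by $M$ applies directly.
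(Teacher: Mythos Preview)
Your proof is correct and is exactly the detailed unpacking of what the paper means by ``This is easy and follows directly from the definitions.'' Both approaches are the same: bound $|\f_j|$ uniformly and use that the measures $[\alpha+dd^c\f_j]\wedge T$ have total mass $1$.
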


This is easy and  follows directly from the definitions.  
In dynamical situations, invariant currents with bounded potentials appear for instance for 
 meromorphic maps on surfaces of Kodaira dimension zero 
 (see \S \ref{sec:examples}).  The next result is a bit more elaborate and will
 be useful in particular with  $\varphi =\Gamma^{\pm}$ in \S \ref{sec:homogeneous}.

\begin{pro} \label{pro:energycriter}
Assume $\f \in PSH(X,\alpha) \cap L^{\infty}_{loc}(X \setminus F)$,
where $F$ is a finite set of points. Then $\f \in L^1(T)$.

Moreover if $\f \geq A \log \dist(\cdot,F)-A$ for some constant $A>0$,
and if $\nu(T,p)=0$ for all $p \in F$, then
$\f \in \E(T,\alpha)$.
\end{pro}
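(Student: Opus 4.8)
The plan is to handle the two assertions separately; the first is a routine local integrability statement, the second the substantive point. For the first assertion, note that $\f$ is bounded on a neighbourhood of the compact set $X\setminus F$ while the trace measure $T\wedge\omega$ is finite, so it is enough to see that $\f$ is $T\wedge\omega$-integrable near each $p\in F$. In a coordinate ball $B\ni p$ write $\alpha=dd^c g$ with $g$ bounded psh, so that $\f=g+v$ on $B$ with $v$ psh and locally bounded on $B\setminus\{p\}$. Since $\{p\}$ has complex codimension $2$, this places us in the range of Demailly's extension of the Bedford-Taylor calculus: such a $v$ lies in $L^1_{loc}(B,T\wedge\omega)$ (and $v T$, $dd^c v\wedge T$ are well defined, with $v T$ extending trivially across $p$). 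Hence $\f\in L^1(T)$.

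For the second assertion, $\f\in L^1(T)$ means that $S\wedge T:=\alpha\wedge T+dd^c(\f T)$ is a well-defined probability measure, so by Proposition~\ref{pro:alternative} we have $\f\in\E(T,\alpha)$ if and only if $S\wedge T(\set{\f=-\infty})=0$. As $\f$ is locally bounded off $F$, $\set{\f=-\infty}\subset F$, and it suffices to prove $S\wedge T(\{p\})=0$ for each $p\in F$. Fix $p$, choose a coordinate ball $B=B(p,r_0)$ on which $\dist(\cdot,F)=\|z-p\|$ and $\alpha=dd^c g$ with $g$ bounded psh, and set $h:=g+\f$. On $B$ we have $\alpha\wedge T=dd^c(g T)$, hence $S\wedge T=dd^c(h T)$; since $h$ is psh and locally bounded on $B\setminus\{p\}$, this coincides, in the sense of the Demailly extension of Bedford-Taylor, with the product $dd^c h\wedge T$. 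By hypothesis $A\log\|z-p\|\le h+O(1)$ near $p$, and both $h$ and $A\log\|z-p\|$ are locally bounded off the single point $p$, so the comparison principle for Monge-Amp\`ere masses (a form of Demailly's comparison theorem for Lelong numbers, see~\cite{demailly}) gives
$$ S\wedge T(\{p\})=(dd^c h\wedge T)(\{p\})\ \le\ (dd^c(A\log\|z-p\|)\wedge T)(\{p\})\ =\ A\,\nu(T,p), $$
the last equality realizing $\nu(T,p)$ as the Monge-Amp\`ere mass of $\log\|z-p\|$ against $T$. Since $\nu(T,p)=0$ we obtain $S\wedge T(\{p\})=0$; summing over $p\in F$ yields $S\wedge T(\set{\f=-\infty})=0$, i.e. $\f\in\E(T,\alpha)$.

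The main obstacle is the comparison step: one must invoke Demailly's comparison theorem and check that its small-unbounded-locus hypotheses hold, namely that $h$ and $A\log\|z-p\|$ are locally bounded off a point, which they are. I expect a more hands-on route --- via the bounded psh function $\phi:=e^{h/A}$, which vanishes only at $p$ and satisfies $\phi\ge c\|z-p\|$, combined with $dd^c h\wedge T\le(A/\phi)\,dd^c\phi\wedge T$ off $p$ and the Chern-Levine-Nirenberg estimate for $dd^c\phi\wedge T$ --- to control $S\wedge T(B(p,\rho))$ but to fall just short of $0$ in the absence of a quantitative rate in the assumption $\nu(T,p)=0$, so the comparison theorem is the clean way past this. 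Finally, note how each hypothesis enters: the first assertion ($\f\in L^1(T)$) is precisely what licenses the use of Proposition~\ref{pro:alternative}; the bound $\f\ge A\log\dist(\cdot,F)-A$ is what makes the comparison with a logarithm of finite coefficient possible; and $\nu(T,p)=0$ is what makes the resulting estimate vanish.
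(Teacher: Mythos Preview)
Your argument is correct, modulo one sign slip: in the first paragraph you write ``$\f=g+v$ with $v$ psh'', but since $\alpha=dd^c g$ and $\alpha+dd^c\f\ge 0$, it is $g+\f$ that is psh; so $\f=-g+v$ with $v:=g+\f$. This does not affect anything downstream, and in the second paragraph you correctly set $h:=g+\f$.

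The paper proceeds differently. For the first assertion it avoids local small-unbounded-locus theory: it chooses a smooth form $\omega'$ cohomologous to $\omega$ vanishing near $F$, writes $\omega=\omega'+dd^c h$ with $h\ge 0$ smooth, and integrates by parts to get $\int(-\f)\,T\wedge\omega\le\int(-\f)\,T\wedge\omega'+\int h\,T\wedge\alpha<\infty$, the first integral being finite because $\f$ is bounded on $\supp(T\wedge\omega')$. For the second assertion the paper sets $g:=A\log\dist(\cdot,F)-A$ (enlarging $\alpha$ to $\alpha+\omega$ if needed to make $g$ $\alpha$-psh), observes that $\theta:=(\alpha+dd^c g)\wedge T$ puts no mass on $F$ because near each $p$ it is essentially the projective mass of $T$ and $\nu(T,p)=0$, concludes $g\in\E(T,\alpha)$ via Proposition~\ref{pro:alternative}, and then transfers this to $\f$ using the energy comparison Corollary~\ref{cor:ineqfdtal} (since $\f\ge g$). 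Both routes ultimately rest on the same identification of $\nu(T,p)$ with a Monge-Amp\`ere point mass; you compare $\f$ with $A\log|\cdot-p|$ directly at the level of point masses via Demailly's comparison theorem, whereas the paper first passes through $g\in\E_\chi$ and then invokes its own comparison machinery. Your route is slightly more direct and leans on a standard external result; the paper's route is more self-contained within the $\E_\chi$ framework just developed.
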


\begin{proof}
We can assume without loss of generality that $\f \leq 0$ on $X$.
Fix $\omega$ a K\"ahler form on $X$. We need to show that
$\int_X (-\f) T \wedge \omega<+\infty$. Let $\omega'$ be a smooth form cohomologous
to $\omega$ which vanishes in a small neighborhood of $F$. We can
find $h \geq 0$ a smooth function such that $\omega=\omega'+dd^c h$.
Now
\begin{eqnarray*}
\int_X (-\f) T \wedge \omega&=&\int_X (-\f) T \wedge \omega'+\int_X h T \wedge (-dd^c \f) \\
&\leq& \int_X (-\f) T \wedge \omega'+\int_X h T \wedge \alpha<+\infty,
\end{eqnarray*}
since $\f$ is bounded on the support of $T \wedge \omega'$. We have used here
that $h T \geq 0$ while $-dd^c \f \leq \alpha$. This shows that
$\f \in L^1(T)$.  Thus the measure
$\mu:=S \wedge T=(\alpha+dd^c \f) \wedge T$ is well defined.

Assume now that $\f \geq g:=A \log \dist(\cdot,F)-A$ for some contant $A>0$
and $\nu(T,p)=0$ for all $p \in F$.  Assume $g \in PSH(X,\alpha)$.  Observe that $g \in L^1(T)$. 
Hence $\theta=[\alpha+dd^c g] \wedge T$ is a well defined positive measure
which looks, locally near each point in $F$, like the projective mass of 
$T$. Since $\nu(T,p)=0$ when $p  \in F$, we infer that $\theta(p)=0$. Therefore
$\theta(\set{g=-\infty})=0$, and there exists $\chi \in {\mathcal W}$ such that
$g \in \E_{\chi}(T,\alpha)$. It now follows from Corollary \ref{cor:ineqfdtal} that $\f \in \E_{\chi}(T,\alpha)$ also, since $\f$ is less singular than $g$.

When $g$ is not $\alpha$-psh, it is $\om$-psh and hence 
$(\om+\alpha)$-psh, for some K\"ahler form $\om$.
Observe that $\varphi$ is also $(\om+\alpha)$-psh. We claim that
$\varphi \in \E(T,\alpha)$ if and only if $\varphi \in \E(T,\alpha+\omega)$.
Indeed
$$
\mu(\f,T,\a+\{ \omega \})
=\lim_{j \rightarrow +\infty} \left[ \mu_j(\f,T,\a)+{\bf 1}_{\{ \f>-j\}} \omega \wedge T \right]
=\mu(\f,T,\a)+{\bf 1}_{\{ \f>-\infty\}} \omega \wedge T.
$$
Now $\omega \wedge T(\f=-\infty)=0$ since $(\f=-\infty) \subset F$ is finite.
Thus
$$
\m\left(\mu(\f,T,\a+\{ \omega \})\right) =\{ T \} \cdot (\a+\{\omega\})
\text{ if and only if }
\m\left(\mu(\f,T,\a)\right) =\{ T \} \cdot \a,
$$ which was the desired result.
We can now conclude the proof by replacing $\alpha$ by $\alpha+\omega$ in the above argument.
\end{proof}

\section{The canonical invariant measure}\label{sec:measure}

Now let us return to the dynamical situation described in the introduction.  
For the remainder of this paper, 
$f:X \rightarrow X$ is a meromorphic transformation of a compact K\"ahler surface $(X,\om)$. We always assume that
\begin{itemize}
\item[-] $f$ is 1-stable, i.e. $(f^n)^*=(f^*)^n$ on $H^{1,1}(X,\R)$ for all $n \in \N$;
\item[-] the dynamical degrees of $f$ satisfy $1 \leq \lambda_2(f)<\l:=\lambda_1(f)$.
\end{itemize}

With these conditions, our work in \cite{ddg1} shows that the canonical current 
$T^+ = \lambda_1^{-1} f^*T^+$ in \eqref{invcurrents} exists and can be alternatively expressed
\begin{equation}
\label{T+}
T^+ = \lim_{n\to\infty}\lambda_1^{-n}f^{n*}\omega^+ = \om^+ + dd^c G^+, \text{ with } G^+ = \sum_{n=0}^\infty \frac{\Gamma^+\circ f^n}{\lambda_1^n},
\end{equation}
where $\om^+$ is a positive closed current with bounded potentials cohomologous to $T^+$ and 
$dd^c\Gamma^+ = \lambda_1^{-1}f^*\omega^+ - \omega^+$.  The canonical forward invariant current $T^- = \lambda_1^{-1} f_* T^-$ also exists and admits
a similar description 
\begin{equation}
\label{T-}
T^- = \lim_{n\to\infty} \lambda_1^{-n} f^n_*\om^- = \om^- + dd^c G^-, \text{ with } G^- = \sum_{n=0}^\infty \lambda_1^{-n} f^n_* \Gamma^-.
\end{equation}

\subsection{The dynamical energy}

We can  assume, without loss of generality, that 
$
\{ \om^+ \} \cdot \{ \om^- \}=\{T^+\} \cdot \{ T^- \}=1.
$
Our aim here is to use the techniques from \S \ref{sec:pluripot} to define a probability measure $\mu_f = T^+\wedge T^-$ and understand its geometric and dynamical properties.  

\begin{defi}
We say that $f$ has finite dynamical energy if
$$
T^+ \in \E(T^-) \text{ and } T^- \in \E(T^+).
$$
\end{defi}

We assume henceforth that our maps always have finite dynamical energy.  
Finite dynamical energy is equivalent via Proposition \ref{prop:union} to having $\chi \in \W$ such that
$$
G^+ \in \E_{\chi}(T^-,\om^+) \text{ and } G^- \in \E_{\chi} (T^+,\om^-).
$$
We say then more specifically that $f$ satisfies condition $(E_{\chi})$.  
We shall see in \S \ref{sec:examples} some weights $\chi \in \W$ that actually 
arise in specific families of examples.

\medskip

The existence of $\mu_f$, in the sense of Theorem \ref{thm:cvma}, 
is a consequence of finite dynamical energy.  When $kod(X)=0$,
the invariant current $T^-$ has bounded potentials, hence it does
not charge pluripolar sets. When $X$ is rational, we instead consider
$T^+$.

\begin{pro} \label{pro:alternativeT+}
Assume $X$ is rational and $f$ has finite dynamical energy.
Then $T^+$ does not charge pluripolar sets.
\end{pro}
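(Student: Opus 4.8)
The plan is to exploit the structure of $T^+$ on a rational surface together with the finite energy hypothesis, via the dichotomy for $T^+$ announced just before Proposition \ref{pro:degen}: either $T^+$ charges no pluripolar set, or $T^+$ is carried by a complete pluripolar set. The point of Proposition \ref{pro:degen} is precisely that the second alternative is incompatible with finite energy, so one only needs to establish that the dichotomy applies. First I would recall from \cite{ddg1} that on a projective (hence here rational) surface the current $T^+$ is \emph{laminar}, or more precisely that it is an extremal point of the cone of closed positive $(1,1)$ currents cohomologous to $\{T^+\}$, and in particular it has minimal singularities among such currents up to the geometric (laminar) structure. I would then invoke the standard fact (Guedj--Zeriahi, extremality arguments of Sibony and others) that an extremal closed positive $(1,1)$ current with quasi-plurisubharmonic local potential either puts no mass on any pluripolar set, or is entirely supported on a complete pluripolar set; the dichotomy comes from the fact that restricting $T^+$ to $\{G^+ = -\infty\}$ (a complete pluripolar set since $G^+$ is qpsh) yields again a closed positive current in the same class, contradicting extremality unless that restriction is $0$ or all of $T^+$.

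Granting the dichotomy, the argument concludes quickly. Suppose, for contradiction, that $T^+ = \om^+ + dd^c G^+$ is supported on a complete pluripolar set; since $\om^+$ has bounded potentials, this set must be $\{G^+ = -\infty\}$, so $T^+$ is supported on $(G^+ = -\infty)$. By hypothesis $f$ has finite dynamical energy, so in particular $T^- \in \E(T^+)$, i.e. $G^- \in \E(T^+, \om^-)$ for a suitable decomposition. But then Proposition \ref{pro:degen}, applied with $S = T^-$, $T = T^+$, $\f = G^-$, $\p = G^+$, gives $\mu(G^+, T^-) = 0$, hence $G^+ \notin \E(T^-, \om^+)$, i.e. $T^+ \notin \E(T^-)$ --- contradicting finite dynamical energy. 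Therefore the first alternative of the dichotomy holds: $T^+$ does not charge pluripolar sets.

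The main obstacle, and the place where the proof is not purely formal, is justifying the dichotomy itself: that the specific current $T^+$ produced by the dynamics admits \emph{no} nontrivial decomposition supported partly on and partly off a given complete pluripolar set. This rests on the extremality (laminarity) of $T^+$ on projective surfaces established in \cite{ddg1}, together with the elementary but essential remark that if $P = \{u = -\infty\}$ with $u$ qpsh, then ${\bf 1}_P\, T^+$ is again closed and positive --- a statement that requires a support/plurifine argument rather than a direct computation. I would therefore structure the write-up as: (i) recall extremality of $T^+$; (ii) prove ${\bf 1}_{\{G^+=-\infty\}} T^+$ is closed positive in the class of a multiple of $\{T^+\}$; (iii) deduce the dichotomy from extremality; (iv) rule out the degenerate case using Proposition \ref{pro:degen} as above. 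Steps (ii)--(iv) are short; step (i) is a citation to the companion paper.
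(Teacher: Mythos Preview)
Your overall strategy matches the paper's: establish the dichotomy (either $T^+$ charges no pluripolar set, or $T^+$ is supported on $\{G^+=-\infty\}$), then eliminate the second alternative via Proposition~\ref{pro:degen}. Your second paragraph, applying Proposition~\ref{pro:degen}, is exactly right.

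However, your justification of the dichotomy itself has two genuine gaps. First, the closedness of ${\bf 1}_{\{G^+=-\infty\}}\,T^+$ is \emph{not} a general plurifine fact, and there is no ``support/plurifine argument'' that delivers it: for an arbitrary positive closed $(1,1)$ current, restriction to a pluripolar set need not be closed. The paper gets closedness from the fact that $T^+$ is a \emph{strongly approximable laminar current} (this is where rationality/projectivity of $X$ enters, via \cite{ddg1}) and then invokes the structure theorem \cite[Theorem~6.8]{structure}, which says precisely that a Borel decomposition of such a laminar current is automatically a decomposition into closed currents. So laminarity is used, but for closedness of the decomposition, not for extremality; your proposal conflates the two.

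Second, the extremality you invoke is not the one available. The paper does not claim $T^+$ is extremal among all positive closed currents in its cohomology class; it uses the weaker (and actually proven) statement that $T^+$ is extremal in the cone of positive closed \emph{$f^*$-invariant} currents (\cite[Remark~2.2]{ddg1}). For this to apply one must check that both pieces $T^+_{np}={\bf 1}_{\{G^+>-\infty\}}T^+$ and $T^+_{pp}={\bf 1}_{\{G^+=-\infty\}}T^+$ are themselves invariant; the paper does this using that $T^+$ does not charge (critical) curves. Your step~(ii) as stated --- that the restriction is closed, positive, and cohomologically a multiple of $\{T^+\}$ --- is the heart of the matter and does not follow from anything elementary.
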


\begin{proof}
The proof consists in establishing the following more precise
alternative:
\begin{itemize}
\item[-] either $T^+$ does not charge pluripolar sets,
\item[-] or it is supported on a pluripolar set, hence $f$ cannot have finite 
dynamical energy.
\end{itemize}
Decompose $T^+$ as $T^+=T^+_{np}+T^+_{pp}$, where $T^+_{np}={\bf 1}_{\set{G^+>-\infty}} T^+$ does not charge pluripolar sets,
while $T^+_{pp}={\bf 1}_{(G^+=-\infty)}$ gives full mass the pluripolar set
${G^+=-\infty}$.
Because $T^+$ is a strongly approximable laminar current (see \cite{ddg1})
It follows from \cite[Theorem 6.8]{structure} 
that this decomposition is closed, i.e. $T^+_{pp},T^+_{np}$ are closed currents. 

Since $T^+$ is invariant (under $f^*/\l$) and does not charge (critical) curves
(see Theorem 2.4 in \cite{ddg1}),
we also infer that $T^+_{pp},T^+_{np}$ are both invariant. Now $T^+$ is an extremal
point of the cone of positive closed invariant currents
(see Remark 2.2 in \cite{ddg1}). Thus it follows that either
$T^+=T^+_{np}$ does not charge pluripolar sets (see Proposition \ref{pro:plp2}),
or $T^+=T^+_{pp}$ is supported on the pluripolar set $\set{G^+=-\infty}$.
In the latter case, it follows from Proposition \ref{pro:degen} that
$f$ cannot have finite dynamical energy.
\end{proof}

As it will be seen below and in \cite{ddg3}, the finite dynamical energy
condition will  allow us to understand 
the dynamics of $\mu_f$ quite thoroughly.  
>From Proposition \ref{pro:alternative} we get the following nice alternative, which 
emphasizes the naturality of  this assumption.

\begin{thm}\label{thm:alternative}
Assume that $T^+ \in L^1(T^-)$, so that $\mu_f=T^+ \wedge T^-$ is well defined. 
Then
\begin{itemize}
\item[-] either $\mu_f$ charges the pluripolar set $\set{G^+=-\infty} \cup \{G^-=-\infty\}$,
\item[-] or $f$ has finite dynamical energy.
\end{itemize}
\end{thm}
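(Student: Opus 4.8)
\textbf{Proof proposal for Theorem \ref{thm:alternative}.}

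The plan is to exploit the $L^1$ hypothesis to put everything in the hands of Proposition \ref{pro:alternative}, and then to chase a dichotomy exactly parallel to the one in Proposition \ref{pro:alternativeT+}. Since $T^+\in L^1(T^-)$, the measure $\mu_f=T^+\wedge T^-=\om^+\wedge T^-+dd^c(G^+T^-)$ is a well-defined probability measure, and by symmetry $T^-\in L^1(T^+)$ as well. Proposition \ref{pro:alternative} then says that $G^+\in\mathcal{E}(T^-,\om^+)$ if and only if $\mu_f(\set{G^+=-\infty})=0$, in which case $\mu(\f,T^-)=\mu_f$; and symmetrically $G^-\in\mathcal{E}(T^+,\om^-)$ if and only if $\mu_f(\set{G^-=-\infty})=0$, with $\mu(\p,T^+)=\mu_f$. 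So the statement reduces to: \emph{either $\mu_f$ charges $\set{G^+=-\infty}\cup\set{G^-=-\infty}$, or both $G^+\in\mathcal{E}(T^-,\om^+)$ and $G^-\in\mathcal{E}(T^+,\om^-)$}, which is precisely finite dynamical energy. Thus the theorem is in fact almost immediate from Proposition \ref{pro:alternative} once the two pluripolar sets are combined, and the only thing to check is that when $\mu_f$ does not charge $\set{G^+=-\infty}\cup\set{G^-=-\infty}$ we may legitimately invoke Proposition \ref{pro:alternative} for \emph{both} currents simultaneously; but that proposition is applied separately to each, using $G^+\in L^1(T^-)$ in one case and $G^-\in L^1(T^+)$ in the other, so no compatibility issue arises.

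Concretely I would proceed as follows. First, assume $\mu_f$ does not charge $\set{G^+=-\infty}\cup\set{G^-=-\infty}$; it then charges neither $\set{G^+=-\infty}$ nor $\set{G^-=-\infty}$. Apply Proposition \ref{pro:alternative} with $(S,T)=(T^+,T^-)$, using $G^+\in L^1(T^-)$: since $T^+\wedge T^-(\set{G^+=-\infty})=0$, we get $G^+\in\mathcal{E}(T^-,\om^+)$, i.e. $T^+\in\mathcal{E}(T^-)$. Then apply Proposition \ref{pro:alternative} again with $(S,T)=(T^-,T^+)$, using $G^-\in L^1(T^+)$ and $T^-\wedge T^+(\set{G^-=-\infty})=0$, to get $G^-\in\mathcal{E}(T^+,\om^-)$, i.e. $T^-\in\mathcal{E}(T^+)$. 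Together these two facts are the definition of finite dynamical energy, so we land in the second alternative. Contrapositively, if $f$ does not have finite dynamical energy, then at least one of $T^+\in\mathcal{E}(T^-)$, $T^-\in\mathcal{E}(T^+)$ fails, and by Proposition \ref{pro:alternative} that failure forces $\mu_f$ to charge the corresponding set $\set{G^+=-\infty}$ or $\set{G^-=-\infty}$, hence to charge the union. This gives the stated dichotomy.

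I do not anticipate a genuine obstacle here: the real content has been front-loaded into Proposition \ref{pro:alternative} (which characterizes membership in $\mathcal{E}(T,\alpha)$ by non-charging of the polar set, under the $L^1$ assumption) and into the symmetry $S\in L^1(T)\Leftrightarrow T\in L^1(S)$ established earlier. The only mild subtlety is bookkeeping of decompositions: one should note that the conditions $T^+\in\mathcal{E}(T^-)$ and $T^-\in\mathcal{E}(T^+)$ are independent of the choice of $\om^\pm$ (as recorded after Theorem \ref{thm:cvma}), so that writing them in terms of the specific potentials $G^\pm$ relative to $\om^\pm$ is harmless, and that the pluripolar sets $\set{G^+=-\infty}$, $\set{G^-=-\infty}$ are likewise decomposition-independent. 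If one wants the cleanest exposition, one can simply state that the theorem ``follows from Proposition \ref{pro:alternative} applied to $T^+$ and to $T^-$ in turn,'' and include the two-line dichotomy argument above for completeness.
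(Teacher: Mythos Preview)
Your proposal is correct and matches the paper's approach exactly: the paper introduces Theorem \ref{thm:alternative} with the sentence ``From Proposition \ref{pro:alternative} we get the following nice alternative,'' and gives no further proof. Your write-up simply spells out that one-line deduction---apply Proposition \ref{pro:alternative} once with $(S,T)=(T^+,T^-)$ and once with $(S,T)=(T^-,T^+)$, using the symmetry of the $L^1$ condition---exactly as intended.
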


\subsection{Mixing}

\begin{thm} \label{thm:mixing}
Assume $f$ has finite dynamical energy. Then the measure 
$\mu_f=T^+ \wedge T^-$ is $f$-invariant and mixing, 
and it does not charge pluripolar sets.
\end{thm}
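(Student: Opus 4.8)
\textbf{Proof proposal for Theorem \ref{thm:mixing}.}

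The plan is to establish invariance first, then mixing, deducing absence of charge on pluripolar sets from the machinery already set up in \S\ref{sec:pluripot}. The last point is immediate: since $T^-$ (or $T^+$, depending on whether $\kod(X)=0$ or $X$ is rational) does not charge pluripolar sets --- by bounded potentials when $\kod(X)=0$, and by Proposition \ref{pro:alternativeT+} when $X$ is rational --- Theorem \ref{thm:cvma} applied to $S=T^+$, $T=T^-$ gives directly that $\mu_f=T^+\wedge T^-=\mu(G^+,T^-)=\mu(G^-,T^+)$ is well defined and puts no mass on pluripolar sets. So the content is invariance and mixing.

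For invariance, the idea is to pull back the defining relations. One has $f^*T^+ = \lambda_1 T^+$ and $f_*T^- = \lambda_1 T^-$, hence also $f^*T^- = \lambda_2\lambda_1^{-1}\cdot(\text{something})$; more precisely, since $f_*f^* = \lambda_2\,\mathrm{id}$ on $H^{1,1}$ one gets $f^*T^- = (\lambda_1/\lambda_2)\,T^-$ up to a correction supported on $E^+$ which contributes nothing against $T^+$ because $T^+$ does not charge curves (Theorem 2.4 in \cite{ddg1}). The key technical point is that the approximation $\mu_f = \lim_j S_j\wedge T_j$ from Theorem \ref{thm:cvma} is robust under $f^*$: writing $\mu_f$ as $\mu(G^+,T^-)$ and using that $f^*$ of the canonical approximants of $G^+$ are (comparable to) canonical approximants of $G^+\circ f$, while $f^*$ applied to $\lambda_1^{-1}T^-$ recovers $T^-$ modulo a curve, one pushes the identity $f^*\mu_f = \lambda_1\lambda_1^{-1}\mu_f = \mu_f$ through the limit. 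The cleanest route: verify $\int h\circ f\,d\mu_f = \int h\,d\mu_f$ for $h$ continuous by approximating $T^\pm$ with bounded potentials, applying invariance of the Bedford--Taylor product under the change of variables formula for bounded potentials (where everything is classical), and passing to the limit using the uniform-in-$j$ mass estimates from Step 2 of the proof of Theorem \ref{thm:cvma} together with Proposition \ref{pro:plp} to control the mass escaping to $\{G^+=-\infty\}\cup\{G^-=-\infty\}$, which vanishes.

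For mixing I would follow the standard $dd^c$-method: show $\int (u\circ f^n)\,v\,d\mu_f \to (\int u\,d\mu_f)(\int v\,d\mu_f)$ for $u,v$ in a dense class, say $u$ smooth. Write $u = \int u\,d\mu_f + (\text{cohomologically trivial part})$; the trivial part is handled by observing that $\lambda_1^{-n}(f^n)^*(\omega)$, once paired appropriately with $T^-$, converges to $(\int u\,d\mu_f)\,T^+\wedge T^-$ --- this is exactly where the convergence \eqref{invcurrents} and the mass bounds of the energy class enter. Concretely: $u\circ f^n$ corresponds to a potential-level object whose $dd^c$ is $(f^n)^*(dd^c u)$, and $\lambda_1^{-n}(f^n)^*\omega \to T^+$; the finite-energy hypothesis guarantees the potentials stay in a fixed class $\E_\chi$ so that wedging with $T^-$ is continuous along this sequence (this is the quantitative input that plain $L^1(T^-)$ would not give). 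The main obstacle I anticipate is precisely this last continuity statement: controlling $\int (u\circ f^n)\,v\,dd^c(\cdot)\wedge T^-$ uniformly, i.e. showing that the ``error'' potentials $\lambda_1^{-n}(f^n)^*\omega - T^+$ tend to zero not just weakly but strongly enough (in energy) to integrate against $T^-$ and against the auxiliary potential of $v$; this requires carefully tracking the weight $\chi$ through the iteration, using condition $(E_\chi)$, the telescoping formulas \eqref{T+}--\eqref{T-} for $G^\pm$, and the estimates of Lemmas \ref{lem:ineqfdtal}--\ref{lem:ineqfdtal2} and Corollary \ref{cor:ineqfdtal} to keep all the energies bounded along the orbit. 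Once that uniform control is in hand, mixing follows by letting $n\to\infty$ and invoking $\mu_f(\{G^+=-\infty\}\cup\{G^-=-\infty\})=0$.
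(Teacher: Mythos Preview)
Your treatment of the pluripolar-set claim is fine and matches the paper exactly: Proposition \ref{pro:alternativeT+} plus Theorem \ref{thm:cvma}.

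For invariance, your route is unnecessarily heavy. The paper observes that by Theorem \ref{thm:cvma} one has $\mu_f=\lim_{n,p\to\infty}\mu_{n,p}$ with $\mu_{n,p}:=\l^{-n}(f^n)^*\om^+\wedge\l^{-p}(f^p)_*\om^-$, and the one-line identity $f_*\mu_{n,p}=\mu_{n-1,p+1}$ (plus continuity of $f_*$ on measures not charging $I^+$) gives $f_*\mu_f=\mu_f$. There is no need to analyze $f^*T^-$, corrections on $E^+$, or how $f^*$ acts on canonical approximants of $G^+$.

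For mixing your proposal has a genuine gap: you have misidentified the mechanism that actually produces the limit. You frame the difficulty as keeping the potentials of $\l^{-n}(f^n)^*\omega$ in a fixed class $\E_\chi$ so that wedging with $T^-$ is continuous along the sequence. But energy control alone gives at best compactness or continuity of the wedge product; it does not tell you \emph{what} the limit of $\int k\cdot h\circ f^n\,d\mu_f$ is. The paper's argument uses a completely different input you never mention: the decay
\[
\int d(h\circ f^n)\wedge d^c(h\circ f^n)\wedge T^+=O\!\left((\lambda_2/\l)^n\right),
\]
which comes from $f^*T^+=\l T^+$ and the push-pull inequality (cf.\ the proof of \cite[Theorem 3.3]{ddg1}), and which is equivalent to Proposition \ref{pro:extrem}, namely $h\circ f^n\,T^+\to\big(\int h\,d\mu_f\big)T^+$. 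This is what pins down the correct limit. The finite-energy hypothesis enters only in the reduction step: one replaces $\mu_f$ by $\mu_j=[\om^-+dd^c\max(G^-,-j)]\wedge T^+$ (using that $\mu_j\equiv\mu_f$ on $\{G^->-j\}$ and Corollary \ref{cor:ineqfdtal} for uniform mass control), then approximates the bounded potential by smooth ones and integrates by parts with Cauchy--Schwarz. Tracking a weight $\chi$ ``through the iteration'' is neither needed nor, as far as I can see, sufficient; without the $(\lambda_2/\l)^n$ decay you have no reason for the cross term to vanish.
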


\begin{proof}
It follows from Proposition \ref{pro:alternativeT+} and Theorem \ref{thm:cvma}
that $\mu_f=T^+ \wedge T^-$ is a well defined probability measure
which does not charge pluripolar sets.
In particular, $\mu_f=T^+ \wedge T^-$ does not charge the pluripolar set 
$I^+ \subset \set{G^+=-\infty}$.  Moreover by Theorem \ref{thm:cvma},
$$
\mu_f=\lim_{n,p \rightarrow +\infty} \mu_{n,p},
\text{ where }
\mu_{n,p}:=\frac{1}{\l^n}(f^n)^* \om^+ \wedge \frac{1}{\l^p}(f^p)_* \om^-.
$$
Since $f_* \mu_{n,p}=\mu_{n-1,p+1}$ and since the operator $f_*$ is continuous
on the set of probability measures which do not charge the indeterminacy set $I^+$,
we infer $f_*\mu_f=\mu_f$, i.e. $\mu_f$ is an invariant probability measure.

We now show that $\mu_f$ is mixing. Let $h$ and $k$ be test functions on $X$. We need to show
that
$$
\int_X k \,  h \circ f^n \, d\mu_f \longrightarrow \int_X h \, d\mu_f
\int_X k \, d\mu_f.
$$

Set $\mu_j:=[\om^-+dd^c v_j] \wedge T^+$, where $v_j:=\max(G^-,-j)$ are the canonical 
approximants of $G^-$. Fix $\chi \in {\mathcal W}$ such that
$G^- \in \E_{\chi}(T^+,\om^-)$.
It follows from Corollary \ref{cor:ineqfdtal}
that $\chi \circ G^- \in L^1(\mu_f)$, 
$\chi \circ v_j \in L^1(\mu_j)$ uniformly in $j$, and also
that $\mu_j \equiv \mu_f$ in the plurifine open set
$\{G^->-j\}$. Therefore 
$$
\left| \int_X k \,  h \circ f^n \, d\mu_f 
-\int_X k \,  h \circ f^n \, d\mu_j \right| 
 \leq ||k||_{L^{\infty}} ||h||_{L^{\infty}}(\mu_f + \mu_j)\{G^-\leq -j\}
$$
converges to zero as $j \rightarrow +\infty$, uniformly with respect to $n \in \N$.

It suffices then to replace $\mu_f$ by $(\om^-+dd^c v) \wedge T^+$, where $v$ is a {\it bounded}
$\om^-$-psh function and show
$$\int_{X} k\, h\circ f^{n}(\om^-+dd^c v) \wedge T^+ \rightarrow \left(\int k (\om^-+dd^c v) \wedge T^+\right)\left(\int h\,d\mu_f \right).$$
Observe that here we can replace $\om^-$
by $\theta^-$, which is smooth.  So it suffices  to 
show the convergence for an arbitrary K\"ahler form $\om$,
instead of $\theta^-$.

If $v$ is smooth the convergence follows from Proposition \ref{pro:extrem} below. For the general case, recall that
we can approximate $v$ from above by a decreasing family of smooth $\om$-psh 
functions
$v_{\e}$. The proof will be finished if we show that
$\int_X k h \circ f^n \, dd^c(v_{\e}-v) \wedge T^+$ converges to zero
as $\e \rightarrow 0$, uniformly with respect to $n$. For this we first  integrate
by parts 
\begin{eqnarray*}
\left| \int_X k\, h \circ f^n \, dd^c(v_{\e}-v) \wedge T^+ \right| \leq
\left| \int_X d k \wedge d^c(v_{\e}-v) \wedge h \circ f^n \, T^+ \right| \\
+\left| \int_X d h \circ f^n \wedge d^c(v_{\e}-v) \wedge k \, T^+ \right|
=I(n,\e)+II(n,\e),
\end{eqnarray*}
and use the Cauchy-Schwarz inequality. For the first term we get 
$$
I(n,\e) \leq ||h||_{L^{\infty}} ||k||_{{\mathcal C}^1}
\left[ \int d(v_{\e}-v) \wedge d^c (v_{\e}-v) \wedge T^+ \right]^{1/2}
\rightarrow 0,
$$
because $v$ is bounded.

Treating the second integral $II(n,\e)$ similarly gives rise to an integral of the form 
$$
\int d(h\circ f^n)\wedge d^c(h\circ f^n)\wedge  k T^+.
$$ 
Then we argue as in the proof of \cite[Theorem 3.3]{ddg1} to get that this integral is $O(\lambda_2^n/\lambda_1^n)$.
\end{proof}

\begin{pro} \label{pro:extrem}
Let $h$ be a (smooth) test function on $X$. Then
$$
h \circ f^n\, T^+ \longrightarrow c T^+,
\text{ where } c=\int h\, d\mu_f
$$
\end{pro}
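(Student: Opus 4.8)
The plan is to prove the asymptotic equidistribution statement $h\circ f^n\,T^+\to cT^+$ with $c=\int h\,d\mu_f$ by exploiting the invariance of $T^+$ under $\lambda_1^{-1}f^*$ together with the strong approximability (laminarity) of $T^+$ established in \cite{ddg1}. First I would reduce to testing against a smooth form: it suffices to show that $\int_X h\circ f^n\,\chi\,T^+\wedge\omega\to c\int_X\chi\,T^+\wedge\omega$ for every smooth $(0,0)$ (or $(1,1)$) test object $\chi$, and in fact by the same pluripotential arguments used repeatedly above (approximating potentials by canonical approximants and using the uniform energy bounds from Lemma \ref{lem:ineqfdtal} and Corollary \ref{cor:ineqfdtal}) it is enough to test against $T^+\wedge T^-$ and against $T^+\wedge\omega^-$, i.e. to understand the behaviour of the measures $h\circ f^n\,T^+\wedge\omega^-$ and show they converge to $c\,T^+\wedge\omega^-$.

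The key step is the following mixing-type estimate. Write $h\circ f^n\,T^+$ and push/pull through the functional equations $\lambda_1^{-1}f^*T^+=T^+$ and $\lambda_1^{-1}f_*T^-=T^-$; pairing against $\omega$ and integrating by parts, the difference $\int h\circ f^n\,T^+\wedge\omega - c\int T^+\wedge\omega$ is controlled by a gradient term of the form
\begin{equation*}
\left(\int d(h\circ f^n)\wedge d^c(h\circ f^n)\wedge T^+\right)^{1/2},
\end{equation*}
exactly as in the proof of \cite[Theorem 3.3]{ddg1}. One then shows this quantity is $O(\lambda_2^n/\lambda_1^n)$: substituting $T^+=\lambda_1^{-n}f^{n*}\omega^+ $ (up to a $dd^c$-exact term with bounded potential that is handled separately) and using the change of variables formula, the integral becomes $\lambda_1^{-n}\int dh\wedge d^c h\wedge f^n_*\omega^+$, and since $f^n_*$ multiplies masses of this type by at most $\lambda_2^n$ (the topological degree controls the pushforward of smooth forms), one gets the bound $O(\lambda_2^n/\lambda_1^n)\to 0$ because $f$ has small topological degree. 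Meanwhile the ``diagonal'' term produces precisely $c=\int h\,d\mu_f$, because $\lambda_1^{-2n}f^{n*}\omega^+\wedge f^n_*\omega^-$ converges to $\mu_f=T^+\wedge T^-$ by Theorem \ref{thm:cvma}.

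I expect the main obstacle to be the careful justification of the integrations by parts and the passage from the smooth approximants $\omega^\pm$, $\theta^\pm$ to the genuine invariant currents $T^\pm$ with unbounded potentials: one must insert canonical approximants, invoke the uniform weighted-energy bounds $M_\chi$ of \emph{Step 2} in the proof of Theorem \ref{thm:cvma}, and control tail contributions on $\{G^\pm\le -j\}$ uniformly in $n$ (as was done in the mixing proof just above). The ``soft'' part — extracting a cluster value of $h\circ f^n\,T^+$ and identifying its mass $c$ via invariance and the extremality of $T^+$ — is routine once the gradient estimate is in hand; the delicate point is ensuring every gradient integral appearing is finite and that the Cauchy–Schwarz splitting does not lose the decay $(\lambda_2/\lambda_1)^n$. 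Once $h\circ f^n\,T^+\to cT^+$ is proved for smooth $h$, the statement follows, and this is precisely the input needed to complete the mixing argument in Theorem \ref{thm:mixing}.
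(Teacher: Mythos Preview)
The paper omits the proof, referring to \cite[Theorem 3.3]{ddg1}; the essential ingredient there is exactly the gradient estimate you isolate, namely
\[
\int d(h\circ f^n)\wedge d^c(h\circ f^n)\wedge T^+ \;=\; O\!\left(\Big(\frac{\lambda_2}{\lambda_1}\Big)^n\right),
\]
obtained from the exact invariance $T^+=\lambda_1^{-n}(f^n)^*T^+$ (not from the approximate identity $T^+\approx\lambda_1^{-n}(f^n)^*\omega^+$ you wrote) and the change-of-variables $\int (f^n)^*(\cdot)=\lambda_2^n\int(\cdot)$. So you have the right core.

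Where your outline goes astray is in the surrounding architecture. First, the reduction ``it is enough to test against $T^+\wedge T^-$ and $T^+\wedge\omega^-$'' is not a valid reduction: convergence of the $(1,1)$-currents $h\circ f^n\,T^+$ must be checked against \emph{arbitrary} smooth $(1,1)$-forms, and knowing the limit only against $\omega^-$ (or worse, against the singular $T^-$) does not determine it. Second, the appeal to canonical approximants, weighted energy bounds and the machinery of Section~\ref{sec:pluripot} is unnecessary here: the statement involves only $T^+$ and a smooth $h$, and the proof in \cite{ddg1} uses neither $G^-$ nor the finite-energy hypothesis. You are importing the difficulties of Theorem~\ref{thm:mixing} into a proposition that is strictly easier.

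The actual scheme (as in \cite{ddg1}) is cleaner: the gradient estimate gives $d(h\circ f^n\,T^+)\to 0$, so every cluster value $S$ is closed; since $-\|h\|_\infty T^+\le S\le \|h\|_\infty T^+$, the extremality of $T^+$ among closed invariant currents (see \cite[Remark 2.2, Theorem 3.3]{ddg1}) forces $S=c'T^+$. The constant $c'$ is then identified by pairing with a single smooth closed form and using $\lambda_1^{-n}(f^n)_*\omega\to(\mathrm{const})\,T^-$, which produces $\int h\,T^+\wedge T^-=\int h\,d\mu_f$. No approximation of potentials and no uniform-in-$n$ tail control on $\{G^\pm\le -j\}$ is needed.
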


\noindent Again, the proof is similar to that of \cite[Theorem 3.3]{ddg1}, so we omit it. 
%


%

\subsection{Geometric intersection}\label{subs:isect}

In this paragraph, $X$ is supposed to be projective.
We showed in \cite{ddg1} that the invariant currents $T^+$ and $T^-$ admit important 
geometric structures. 
More precisely $T^+$ (resp. $T^-$) is a \emph{strongly approximable} \emph{laminar} (resp. \emph{woven}) current.

Our purpose here is to show that the measure $\mu_f$ can be alternatively 
obtained by ``intersecting'' these geometric structures. Recall that another result that has been obtained using laminarity 
is the fact that $T^+$ --and consequently $\mu_f$-- does not charge pluripolar sets. 

We first very briefly recall some preliminaries on geometric currents (see \cite{structure, birat} and also \cite{ddg1, ddg3} for more details). The construction of these geometric structures requires subdivisions $\qq$ of $X$ into families of ``cubes'' $Q\in \qq$, which are obtained by projecting along two generically  transverse linear pencils, and taking intersections of subdivision of the projection bases into squares. In particular we have some freedom in the choice of the projections and the squares. The spaces parametrizing  projections and squares are manifolds so we can speak of a 
 ``generic subdivision'' where genericity is understood in the sense of Lebesgue measure.

That $T^+$ is a strongly approximable laminar current  means that if $\qq$ is a generic 
increasing family (actually,  a sequence) of subdivisions by cubes, then 
$T^+$ is the limit of an increasing family $$T_\qq^+ = \sum_{Q\in \qq} T^+_Q,$$  the current $T^+_Q$ is uniformly laminar in $Q$.
Furthermore,   $T^+_Q$ can be written as an integral over a measured family of
disjoint submanifolds  of uniformly  bounded volume
\begin{equation}\label{eq:integral}
T^+_Q = \int_{\alpha\in A_Q^+} [\Delta^+_\alpha] \, d\nu_Q^+(\alpha).
\end{equation}
 We  have the important estimate 
\begin{equation}\label{eq:mass}
\m(T^+-T^+_\qq)\leq C r^2.
\end{equation}
The same holds for $T^-$, with ``woven'' instead of ``laminar''. In this case  
\eqref{eq:integral}
becomes 
\begin{equation}\label{eq:integral-}
T^-_Q = \int_{\alpha\in A_Q^-} [\Delta^-_\alpha] \, d\nu_Q^-(\alpha), 
\end{equation}
where the $\Delta^-$ are allowed to intersect and can be singular.
In view of \eqref{eq:integral} and \eqref{eq:integral-}
we can naturally define the {\em geometric intersection} of $T_Q^+$ and $T_Q^-$ 
as 
$$
T^+_Q\geom T^-_Q = \int_{A_Q^+}\int_{A_Q^-} [\Delta^+_\alpha \cap \Delta^-_{\alpha'}]\, d\nu_Q^+(\alpha)\,d\nu_Q^-(\alpha'),
$$
where by definition, $[\Delta_\alpha \cap \Delta_{\alpha'}]$ is the sum of Dirac masses at isolated intersections, counting multiplicities.

Assume now that $T^+\wedge T^-$ is well defined, in the $L^1$ or energetic sense. 
We say that the wedge product $T^+\wedge T^-$ is a {\em geometric intersection} if the 
family of measures $T^+_\qq\geom T^-_\qq := \sum_{Q\in \qq} T^+_Q\geom T^-_Q$ increases to $T^+\wedge T^-$ when $\qq$ is any family of generic subdivisions into cubes of size $r\cv 0$. 


The following basic result asserts that for uniformly geometric currents the wedge product is geometric.

\begin{pro} \label{pro:geom} 
With notation as above,
assume that  $T_\qq^+ \in L^1_{loc}(T^-_\qq)$. Then the wedge product
$T^+_\qq \wedge T^-_\qq$ is geometric, that is, 
$$T^+_\qq\wedge T^-_\qq = \sum_{Q\in \qq}T^+_Q\geom T^-_Q.$$
\end{pro}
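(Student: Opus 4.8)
The plan is to reduce the statement to the classical fact that for two transversally intersecting uniformly laminar/woven currents supported on a single cube $Q$, the Bedford--Taylor wedge product coincides with the geometric intersection. First I would localize: since $T^+_\qq = \sum_Q T^+_Q$ and $T^-_\qq = \sum_Q T^-_Q$ are locally finite sums and each $T^\pm_Q$ is supported on the closed cube $Q$, bilinearity of the wedge product (valid under the standing $L^1_{loc}$ hypothesis, which passes to each pair $T^+_Q, T^-_{Q'}$) gives
$$
T^+_\qq \wedge T^-_\qq = \sum_{Q, Q' \in \qq} T^+_Q \wedge T^-_{Q'}.
$$
For $Q \neq Q'$ the supports meet only along the common boundary faces, which have Lebesgue measure zero in the relevant transverse directions; since $T^-_{Q'}$ has locally bounded potential near $\partial Q'$ away from its own leaves and $T^+_Q$ puts no mass on a pluripolar (hence measure-zero) set, the cross terms vanish. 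This is a point one must check but it is not the crux.

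The main step is the diagonal term. Fix a cube $Q$ and use the disintegrations \eqref{eq:integral} and \eqref{eq:integral-}, writing $T^+_Q = \int_{A^+_Q} [\Delta^+_\alpha]\,d\nu^+_Q(\alpha)$ with the $\Delta^+_\alpha$ pairwise disjoint graphs over one projection base, and $T^-_Q = \int_{A^-_Q} [\Delta^-_{\alpha'}]\,d\nu^-_Q(\alpha')$ with the $\Delta^-_{\alpha'}$ graphs (possibly singular, possibly overlapping) over the other base. I would first treat the wedge of a single leaf $[\Delta^+_\alpha]$ against $T^-_Q$: because $[\Delta^+_\alpha]$ is the integration current over a smooth submanifold, $[\Delta^+_\alpha]\wedge T^-_Q$ is obtained by slicing $T^-_Q$ by $\Delta^+_\alpha$, and by Fubini for the slicing of the disintegrated woven current this equals $\int_{A^-_Q}\big([\Delta^+_\alpha]\wedge[\Delta^-_{\alpha'}]\big)\,d\nu^-_Q(\alpha')$. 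For the transverse pair of graphs over complementary pencils, $[\Delta^+_\alpha]\wedge[\Delta^-_{\alpha'}]$ is exactly the sum of Dirac masses at the isolated intersection points with multiplicity, i.e.\ $[\Delta^+_\alpha \cap \Delta^-_{\alpha'}]$ — this is the standard computation that the Bedford--Taylor product of two graphs over transverse projections is the geometric intersection (genericity of the subdivision ensures the graphs really are transverse and meet properly). Then integrating over $\alpha \in A^+_Q$ against $d\nu^+_Q$ and invoking the continuity of the wedge product under the $L^1$ hypothesis (Lemma \ref{lem:tilde} applied to the monotone approximation of $T^+_Q$ by finite sums of leaves, or directly the definition of the uniformly laminar current as such an integral) yields
$$
T^+_Q \wedge T^-_Q = \int_{A^+_Q}\int_{A^-_Q} [\Delta^+_\alpha \cap \Delta^-_{\alpha'}]\,d\nu^+_Q(\alpha)\,d\nu^-_Q(\alpha') = T^+_Q \geom T^-_Q.
$$

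Summing over $Q$ then gives $T^+_\qq \wedge T^-_\qq = \sum_{Q\in\qq} T^+_Q \geom T^-_Q$, which is the claim. The hard part, as indicated, is justifying the slicing-and-Fubini interchange for the woven current $T^-_Q$ — one must know that the measured family $\{\Delta^-_{\alpha'}\}$ can be sliced by the transverse family $\{\Delta^+_\alpha\}$ with no loss of mass and no contribution from tangencies or singular points of the $\Delta^-_{\alpha'}$; here one uses that for a \emph{generic} choice of projections and squares the singular loci and tangency loci of the $\Delta^-_{\alpha'}$ project to measure-zero sets in the $\Delta^+$-parameter, so for $\nu^+_Q$-a.e.\ $\alpha$ the leaf $\Delta^+_\alpha$ avoids them and meets every $\Delta^-_{\alpha'}$ properly. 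The $L^1_{loc}$ hypothesis on $T^+_\qq, T^-_\qq$ is what guarantees all the wedge products in sight are well defined and that the approximation of each $T^\pm_Q$ by finite partial integrals of leaves converges in the sense needed to pass to the limit.
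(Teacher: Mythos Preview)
Your outline is correct and follows the same overall strategy as the paper---disintegrate both $T^+_Q$ and $T^-_Q$ into their constituent leaves and reduce to the classical fact that $[\Delta]\wedge[\Delta'] = [\Delta\cap\Delta']$ when the intersection is isolated. However, your justification of the key disintegration step differs from the paper's and is more laborious than necessary.

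The paper isolates the disintegration step as a separate potential-theoretic lemma (Lemma~\ref{lem:integral}): if $S=\int S_\alpha\,d\nu(\alpha)$ is a decomposition into positive closed currents and $S\in L^1_{\mathrm{loc}}(R)$, then for $\nu$-a.e.\ $\alpha$ one has $S_\alpha\in L^1_{\mathrm{loc}}(R)$ and $R\wedge S=\int(R\wedge S_\alpha)\,d\nu(\alpha)$. The proof is short: one builds a potential for $S$ as the integral of canonical local potentials $u_\alpha$ for the $S_\alpha$ (with a uniform $L^1$ bound $\|u_\alpha\|_{L^1}\le C\,\mathbf M(S_\alpha)$), so that $u=\int u_\alpha\,d\nu(\alpha)$ is a potential for $S$; integrability of $u$ against $R$ then forces integrability of $u_\alpha$ for a.e.\ $\alpha$, and applying $dd^c$ to $uR=\int u_\alpha R\,d\nu(\alpha)$ gives the formula. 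Applying this lemma twice immediately yields the double integral over leaves, and the observation that $[\Delta^+_\alpha]\in L^1_{\mathrm{loc}}([\Delta^-_{\alpha'}])$ is \emph{equivalent} to the intersection $\Delta^+_\alpha\cap\Delta^-_{\alpha'}$ being isolated finishes the proof.

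The point to take away is that the a.e.\ proper intersection of leaves is a \emph{consequence} of the standing hypothesis $T^+_\qq\in L^1_{\mathrm{loc}}(T^-_\qq)$, via the lemma, and not something that needs to be arranged separately through genericity of the subdivision. Your appeal to genericity to avoid tangencies and singular points is therefore unnecessary here (genericity is used elsewhere in the paper, but not for this proposition). Likewise, the slicing/Fubini language and the invocation of Lemma~\ref{lem:tilde} for monotone approximation by finite sums of leaves can be replaced wholesale by two applications of Lemma~\ref{lem:integral}. Finally, the cross-term discussion for $Q\ne Q'$ is superfluous: the wedge product is local and each $T^\pm_Q$ lives in the cube $Q$, so there is nothing to check.
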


\begin{proof}
The proposition follows by applying  Lemma \ref{lem:integral} below twice. 
Notice that if $\Delta$ and $\Delta'$ are submanifolds  in $Q$, $[\Delta] \in L^1_{loc}([\Delta'])$ iff $\Delta$ and 
$\Delta'$ only have isolated intersections. If this holds, then $[\Delta] \wedge [\Delta']
=[\Delta\cap\Delta']$ (see \cite{demailly}).
\end{proof}

\begin{lem}\label{lem:integral}
Assume that $R$ and $S$ are two positive closed currents in an open set $Q$, such that 
$S\in L^1_{\rm loc}(R)$ and $S$ admits a decomposition 
$S = \int S_\alpha d\nu(\alpha)$  as an integral of positive closed currents. Then for $\nu$ a.e. $\alpha$, $S_\alpha\in L^1_{\rm loc}(R)$ and we have the decomposition 
$$ R \wedge S = \int (R\wedge S_\alpha) d\nu(\alpha).$$ 
\end{lem}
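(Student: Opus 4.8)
The plan is to make everything local and then run a disintegration-of-potentials argument. Both assertions are local and of $L^1_{\rm loc}$ type, so I would cover $Q$ by countably many coordinate balls $B$ with $\overline B\subset Q$ (each a Euclidean ball in a holomorphic chart) and argue on one such $B$. There write $R=dd^c u$ and $S=dd^c w$ with $u,w$ psh near $\overline B$; recall that ``$S\in L^1_{\rm loc}(R)$'' means precisely that $w\in L^1_{\rm loc}(\sigma_R)$, where $\sigma_R$ denotes the trace measure of $R$ --- a condition insensitive to the (pluriharmonic, hence smooth) ambiguity in $w$ --- and that by the $L^1$-definition of the product recalled above (cf.\ \cite{demailly}) one has $R\wedge S=dd^c(wR)$.

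The only delicate point is to choose a \emph{measurable} family of potentials for the pieces $S_\alpha$. From the measurable structure underlying the decomposition $S=\int S_\alpha\,d\nu(\alpha)$ one obtains psh functions $w_\alpha$ near $\overline B$ with $dd^c w_\alpha=S_\alpha$; subtracting constants one may arrange $w_\alpha\le 0$ on $\overline B$, still measurably in $\alpha$. A standard estimate controlling a psh function on $B$ by its supremum there plus the mass of its $dd^c$ on a slightly larger ball $B'$ with $B\Subset B'\Subset Q$, together with $\int\sigma_{S_\alpha}(B')\,d\nu(\alpha)=\sigma_S(B')<+\infty$, shows that $W:=\int w_\alpha\,d\nu(\alpha)$ is locally integrable and psh, with $dd^c W=\int S_\alpha\,d\nu(\alpha)=S$ on $B$. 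Since $W$ and $w$ are both potentials of $S$ I may replace $w$ by $W$; then $R\wedge S=dd^c(WR)$, and $R\wedge S_\alpha=dd^c(w_\alpha R)$ as soon as $w_\alpha\in L^1_{\rm loc}(\sigma_R)$.

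The integrability claim is now immediate from Tonelli's theorem: for any compact $K\subset B$, since all $w_\alpha\le 0$,
$$\int\Bigl(\int_K (-w_\alpha)\,d\sigma_R\Bigr)\,d\nu(\alpha)=\int_K (-W)\,d\sigma_R<+\infty ,$$
so $w_\alpha\in L^1(K,\sigma_R)$ for $\nu$-a.e.\ $\alpha$; exhausting $B$ by such $K$, and then $Q$ by such balls, yields $S_\alpha\in L^1_{\rm loc}(R)$ for $\nu$-a.e.\ $\alpha$. For the decomposition formula I would test against an arbitrary test function $\phi$ on $B$ and set $\theta:=R\wedge dd^c\phi$, a signed measure with $|\theta|\le C_\phi\,\sigma_R$ on $\supp\phi$; then
$$\langle R\wedge S_\alpha,\phi\rangle=\langle dd^c(w_\alpha R),\phi\rangle=\langle w_\alpha R,dd^c\phi\rangle=\int w_\alpha\,d\theta .$$
Applying the integrability above with $K=\supp\phi$ legitimizes Fubini, so
$$\int\langle R\wedge S_\alpha,\phi\rangle\,d\nu(\alpha)=\int W\,d\theta=\langle WR,dd^c\phi\rangle=\langle dd^c(WR),\phi\rangle=\langle R\wedge S,\phi\rangle ,$$
and as $\phi$ is arbitrary this gives $R\wedge S=\int (R\wedge S_\alpha)\,d\nu(\alpha)$.

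I expect the main obstacle to be organizational rather than conceptual, namely the construction in the second paragraph: producing a genuinely measurable family $\alpha\mapsto w_\alpha$ of potentials and verifying that the single hypothesis $S\in L^1_{\rm loc}(R)$, once the potentials have been normalized, disintegrates correctly (finiteness of $\sigma_S$ on relatively compact sets being what prevents $W$ from degenerating). Everything afterwards is Tonelli/Fubini plus the linearity of $w\mapsto dd^c(wR)$; the pluriharmonic ambiguity in the choice of potential is harmless because $dd^c(hR)=dd^c h\wedge R=0$ for $h$ pluriharmonic.
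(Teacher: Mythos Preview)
Your proposal is correct and follows essentially the same route as the paper: localize to a ball, choose a family of nonpositive psh potentials $w_\alpha$ for the $S_\alpha$ whose $L^1$ norm is controlled by the mass of $S_\alpha$, integrate them to a potential $W$ for $S$, and conclude by Tonelli/Fubini together with linearity of $w\mapsto dd^c(wR)$. The only difference is packaging: the paper resolves the ``organizational'' issue you flag---the measurable choice of $w_\alpha$ with the right $L^1$ bound---by invoking the \emph{canonical} potential of Lelong (stated there as a separate lemma), whose canonicity gives measurability for free and whose estimate $\|w_\alpha\|_{L^1}\le C\,\mathbf{M}(S_\alpha)$ is exactly the ``standard estimate'' you appeal to.
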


\begin{proof}
The result is local so consider a small ball $B\Subset Q$.
 By definition, $\alpha\mapsto \m(S_\alpha\rest{B})$ is locally $\nu$ integrable. 
 By Lemma \ref{lem:potential} below, for every $\alpha$ there exists a non positive psh function  $u_\alpha$  in $B$  such that $dd^cu_\alpha=S_\alpha$ 
and  $\norm{u_\alpha}_{L^1(B')}\leq C\m(S_\alpha\rest{B})$, where $B'$ is a slightly smaller ball. Hence  $u=\int u_\alpha d\nu(\alpha)$ 
is a well defined psh function which is a potential for $S$ in  $B'$. 

Now since $u\in L^1_{\rm loc} (R)$, we get that for a.e. $\alpha$, 
$u_\alpha\in L^1_{\rm loc} (R)$, which is the first assertion of the lemma, while the second follows by applying $dd^c$ to the formula $uR= \int u_\alpha R d\nu(\alpha)$. 
\end{proof}

The following lemma is classical and goes back at least to \cite{lelong} (see \cite{BEM} for a brief treatment).

\begin{lem}\label{lem:potential}
Let $T$ be a positive closed current with finite mass in the unit ball $B\subset \cd$. Then $T$ admits  potential $u$ which is canonical, negative on $B(0, 1/2)$ and satisfies $\norm{u}_{L^1(B(0, 3/4)}\leq C \m(T)$, with $C$ a universal constant.
\end{lem}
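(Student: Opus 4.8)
The plan is to produce $u$ in two stages: first exhibit \emph{some} plurisubharmonic potential of $T$ on the ball, and then pass to a canonically normalized one whose $L^1$ norm is controlled by the mass and which is negative on $B(0,1/2)$. For the first stage I would invoke the classical local solvability of the $dd^c$-equation: since $B=B(0,1)\subset\cd$ is contractible and pseudoconvex, the closed positive $(1,1)$-current $T$ admits a potential, i.e. a psh function $v$ on $B$ with $dd^c v=T$ (on a ball one writes $T=\bar\partial\theta$ for a $(1,0)$-current $\theta$, corrects $\theta$ by a holomorphic $1$-form so that also $\partial\theta=0$, and then writes $\theta=\partial v$). By itself this gives no estimate — any two potentials differ by a pluriharmonic function, and the pluriharmonic ambiguity can be arbitrarily large — so the real content is to choose the potential correctly.

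For the quantitative statement I would use Lelong's \emph{canonical potential}, which is precisely the classical fact recalled from \cite{lelong} (see also \cite{BEM}): one represents a potential of $T$ by an explicit integral of $T$ against a universal Bochner--Martinelli--Koppelman type kernel $k(z,w)$, whose singularity along the diagonal is of order $|z-w|^{-(2n-1)}=|z-w|^{-3}$. Since $|z-w|^{-3}$ is locally integrable on $\cd\simeq\re^4$, writing $u(z)=\int_B k(z,w)\wedge T(w)$ and applying Fubini yields a function with $dd^c u=T$ near the origin and $\norm{u}_{L^1(B(0,3/4))}\le C_0\,\m(T)$, with $C_0$ universal, because $T$ is a positive $(1,1)$-current whose trace mass is comparable to $\m(T)$. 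As $dd^c u=T\geq0$, this $u$ is automatically plurisubharmonic, and since the formula is universal and linear in $T$ the potential depends canonically (and measurably) on $T$, with $u\equiv0$ when $T=0$; this canonicity is what makes it usable in the integral decomposition of Lemma \ref{lem:integral}.

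Finally I would normalize for the sign condition. By the sub-mean value inequality, for $z\in B(0,1/2)$ one has $u(z)\le|B(0,1/4)|^{-1}\int_{B(z,1/4)}u\le|B(0,1/4)|^{-1}\norm{u}_{L^1(B(0,3/4))}\le C_1\m(T)$, so replacing $u$ by $u-\max\bigl(0,\sup_{B(0,1/2)}u\bigr)$ makes $u$ negative on $B(0,1/2)$, leaves $dd^c u=T$ unchanged, and enlarges the $L^1$ bound only by a universal factor, giving $\norm{u}_{L^1(B(0,3/4))}\le C\,\m(T)$. The main obstacle, and the only genuinely non-formal point, is the construction in the second paragraph: producing a potential whose $L^1$ norm is \emph{linearly} controlled by $\m(T)$, equivalently making the correct choice of the pluriharmonic ambiguity — naïve normalizations such as subtracting $\sup_{B(0,1/2)}v$ or a sphere average of $v$ do not yield such a bound. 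This is exactly the classical estimate we are invoking, which is why we refer to \cite{lelong,BEM} for the details.
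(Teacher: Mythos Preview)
Your proposal is correct and is exactly the classical argument the paper has in mind: the paper gives no proof of this lemma, merely citing \cite{lelong} and \cite{BEM}, and your sketch (Lelong's canonical integral potential against a locally integrable kernel, followed by a constant shift to enforce negativity) is precisely the content of those references. The only cosmetic point is that ``canonical'' here is meant in the sense of a universal, measurable assignment $T\mapsto u$ (needed so that $\alpha\mapsto u_\alpha$ is measurable in Lemma~\ref{lem:integral}), not linearity in $T$, so your final normalization step is harmless.
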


%
%
%

We now arrive to the main result in this paragraph.

\begin{thm}
Assume $X$ is projective and that $f$ has finite dynamical energy.
Then the wedge product $T^+ \wedge T^-$ is geometric.
\end{thm}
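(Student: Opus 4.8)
The plan is to marry the geometric machinery of \S\ref{subs:isect} to the pluripotential comparison developed above. Fix a generic increasing sequence of cube subdivisions $\qq=\qq_n$ of size $r=r_n\to 0$, with $T^\pm_{\qq}=\sum_{Q\in\qq}T^\pm_Q$, $T^+_Q$ uniformly laminar and $T^-_Q$ uniformly woven in $Q$, and $\m(T^\pm-T^\pm_\qq)\le Cr^2$. First I would reduce to a statement about Bedford--Taylor wedge products. Genericity of $\qq$ ensures that a laminar leaf $\Delta^+_\alpha$ and a woven leaf $\Delta^-_{\alpha'}$ meet only in isolated points for $\nu^+_Q\otimes\nu^-_{Q'}$-a.e. pair $(\alpha,\alpha')$; by the slicing Lemma \ref{lem:integral}, applied in each pair of cubes, together with the structure theory of strongly approximable laminar/woven currents \cite{structure, birat}, this gives $T^+_\qq\in L^1_{loc}(T^-_\qq)$, so Proposition \ref{pro:geom} yields $T^+_\qq\geom T^-_\qq=T^+_\qq\wedge T^-_\qq$ (product in the sense of \cite{bt1}). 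It then suffices to prove $T^+_\qq\wedge T^-_\qq\nearrow \mu_f=T^+\wedge T^-$ as $n\to\infty$ --- the energetic product being available because $f$ has finite dynamical energy, and charging no pluripolar set by Proposition \ref{pro:alternativeT+}.

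Monotonicity and an upper bound follow from bilinearity. For $\qq_n\prec\qq_{n+1}$ one has $0\le T^\pm_{\qq_n}\le T^\pm_{\qq_{n+1}}\le T^\pm$, and the Bedford--Taylor product being bilinear and monotone in each slot (the relevant local potentials being under control), $T^+_{\qq_n}\wedge T^-_{\qq_n}\le T^+_{\qq_{n+1}}\wedge T^-_{\qq_{n+1}}\le T^+\wedge T^-$; when $T^+\notin L^1(T^-)$ the last inequality is obtained by interposing the canonical approximants $\om^\pm+dd^c\max(G^\pm,-j)$ and invoking the uniform energy bounds from the proof of Theorem \ref{thm:cvma}. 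Hence $\mu_\infty:=\lim\nearrow T^+_{\qq_n}\wedge T^-_{\qq_n}$ exists and $\mu_\infty\le\mu_f$.

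Equality then reduces to a mass count. The total mass of $T^+_\qq\wedge T^-_\qq$ tends to $\{T^+\}\cdot\{T^-\}=1=\m(\mu_f)$: this is the cohomological pairing $\{T^+_\qq\}\cdot\{T^-_\qq\}$ when the $T^\pm_\qq$ are closed, and in general it follows from the mass estimates on geometric intersections in \cite{birat, structure}, using that $\m(T^\pm-T^\pm_\qq)\le Cr^2\to 0$ and that a K\"ahler class is strictly positive on the closed pseudoeffective cone. A positive measure $\mu_\infty\le\mu_f$ of the same finite total mass as $\mu_f$ must coincide with it; since $\qq$ was an arbitrary generic increasing sequence with $r_n\to 0$, this is exactly the assertion that $T^+\wedge T^-$ is a geometric intersection.

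The main obstacle is the middle step: making the local products $T^+_\qq\wedge T^-_\qq$ rigorously well-defined and verifying that they behave bilinearly and remain dominated by $\mu_f$, especially in the case $T^+\notin L^1(T^-)$, where only the global energy-theoretic product is at hand and the comparison must be pushed through the canonical approximants of $G^\pm$ and the uniform estimates of Step 2 in the proof of Theorem \ref{thm:cvma}. This is also where finite dynamical energy does essential work beyond merely producing $\mu_f$: Proposition \ref{pro:alternativeT+} guarantees $\mu_f$ charges no pluripolar set, so none of the intersection mass can escape onto a pluripolar set invisible to the geometric leaves $\Delta^\pm_\alpha$, which is what makes the mass argument decisive.
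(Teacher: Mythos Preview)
Your ``mass count'' in the third paragraph is the gap. The currents $T^\pm_\qq$ are \emph{not} closed on $X$ (only in the interiors of the cubes), so there is no cohomology class $\{T^\pm_\qq\}$ and no pairing $\{T^+_\qq\}\cdot\{T^-_\qq\}$ to invoke. The mass bound $\m(T^\pm-T^\pm_\qq)\le Cr^2$ controls the trace measure of the difference current against $\omega$, not the mass of its intersection with $T^\mp$: in general $\m(R)\to 0$ does not force $\m(R\wedge T^-)\to 0$ unless $T^-$ has bounded potentials, which it need not. Nothing in \cite{birat,structure} gives a ready-made mass estimate of the form $\m(T^+_\qq\wedge T^-_\qq)\to 1$; in fact that convergence is precisely the content of Theorem 5.2 in \cite{birat} (in the birational case), and its proof is the hard analytic estimate you are skipping.

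The paper proceeds differently. One fixes test functions $\psi_\qq$ adapted to the cubes and estimates $\int\psi_\qq\,T^-\wedge(T^+-T^+_\qq)$ directly. The key is to interpose the bounded-potential approximants $T^-_j=\omega^-+dd^c\max(G^-,-j)$: the discrepancy on $\{G^-\le -j\}$ is controlled uniformly in $r$ by the energy (this is where finite dynamical energy enters quantitatively, not just to define $\mu_f$), and for fixed $j$ one integrates by parts and applies Cauchy--Schwarz to get
\[
\Bigl|\int d\psi_\qq\wedge d^cH_j\wedge(T^+-T^+_\qq)\Bigr|^2
\le \|d\psi_\qq\|_\infty^2\cdot O(r^2)\cdot\int dH_j\wedge d^cH_j\wedge(T^+-T^+_\qq),
\]
the $O(r^2)$ coming from strong approximability and exactly cancelling $\|d\psi_\qq\|_\infty^2=O(r^{-2})$. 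This is the estimate that replaces your mass argument; without it the proof does not close. A secondary point: your claim that genericity of $\qq$ alone yields $T^+_\qq\in L^1_{loc}(T^-_\qq)$ is not justified either; in the non-$L^1$ case the paper replaces $T^\pm_\qq$ by nearby currents $S^\pm_\qq$ built from leaves $\Delta_\alpha$ on which $G^\mp$ has bounded integral (using $G^\mp\in\E(T^\pm)$), and this step too is not automatic.
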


\begin{proof}  Let us first assume for simplicity that $T^+ \in L^1(T^-)$.
This guarantees that all the wedge products $T^{\pm} \wedge T^{\mp}_\qq$,
$T^+_\qq \wedge T^-_\qq$ are well defined in the $L^1_{\rm loc}$ 
sense  and bounded from
above by $T^+ \wedge T^-$. The wedge product
$T_\qq^+ \wedge T_\qq^-$ is geometric by Proposition \ref{pro:geom} 
and we need to show
that this product increases towards $\mu_f=T^+ \wedge T^-$, as 
$r>0$ decreases towards $0$.
Here the situation is symmetric so it suffices to estimate the mass
$\m(T^- \wedge (T^+-T_\qq^+))$. We follow the proof and notation of Theorem 5.2 in \cite{birat}.

Shifting the cubes slightly, we can assume the mass of $\mu_f$ is not concentrated
near the boundary of ${\mathcal Q}$.
Let $\p_\qq=\sum_{Q \in {\mathcal Q}} \p_Q$ be a test function 
such that each function $\p_Q$ is supported in the cube $Q$,
satisfies $0 \leq \p_Q \leq 1$ and is identically 
equal to $1$ in the major part of $Q$. It suffices to show
that $\int_X \p_\qq T^- \wedge (T^+-T_\qq^+) \rightarrow 0$.
Note that $\p$ can be chosen so that 
$||\nabla \p_\qq||=O(1/r)$ and $||dd^c \p_\qq||=O(1/r^2)$.

Let $G_j^- =\max\{G^-,-j\}$, $T_j^- = \om^- + dd^c G_j^-$ and $\mu_j = T^-_j\wedge T^+$ as before.  Then
\begin{equation}
\label{split}
\int \p_\qq T^- \wedge (T^+-T_\qq^+) = \int \p_\qq T^-_j\wedge(T^+ - T_\qq^+)
+ \int \p_\qq (T^- - T_j^-)\wedge(T^+ - T_\qq^+).
\end{equation}
In the second integral on the right, we use the facts that $T^+ \geq T_\qq^+$ are closed on
$\supp\psi_\qq$ and that $T^-\wedge (T^+-T_\qq^+) =T_j^-\wedge (T^+-T_\qq^+)$ on $\{G^->-j\}$ to estimate
\begin{eqnarray}\label{split2}
\abs{\int_X \p_\qq (T^- - T_j^-)\wedge(T^+ - T_\qq^+) }& = &\abs{
\int_{\{G^-\leq -j\}} \p_\qq (T^- - T_j^-)\wedge(T^+ - T_\qq^+)} \\ \notag
& \leq & \int_{\{G^-\leq -j\}}\psi_\qq(\mu_f + \mu_j),
\end{eqnarray}
which tends to zero as $j\to \infty$ uniformly in $r$ (recall that $\mu_j(\set{G^-\leq -j}) = \mu(\set{G^-\leq -j})$ see \eqref{eq:mumass}). 
>From now on we fix $j$ such that this integral  is small.

It remains to control the first integral on the right hand  side of \eqref{split}.
For convenience it is better to write $T_j^- = \theta^- +dd^c H_j$, where $\theta^-$ is a smooth form cohomologous to $\omega^-$ and $H_j$ is still bounded. 
 This gives
$$
\int \p_\qq T^-_j\wedge(T^+ - T_\qq^+) = -\int d\psi_\qq\wedge d^cH_j\wedge (T^+ - T_\qq^+) + 
\int \psi_\qq \theta^-\wedge(T^+ - T_\qq^+).
$$
The second term on the right tends to zero as $r\to 0$ by laminarity of $T^+$.  The first we estimate with Schwarz' inequality and the estimate given by the ``strong approximability'' of $T^+$.
$$
\left|\int d\psi_\qq\wedge d^cH_j\wedge (T^+ - T_\qq^+)\right|^2
\leq \norm{d \psi_\qq}^2_{\infty} O(r^2) \int dH_j\wedge d^c H_j\wedge (T^+ - T_\qq^+).
$$
Since $\norm{d \psi_\qq}^2_{\infty} = O(r^{-2})$, it suffices to know that the last integral tends to zero with $r$.
But this happens because the bounded function $H_j$ has gradient in $L^2$ with respect to $T^+$, and because $T_\qq^+\nearrow T^+$ (see the end of the argument in \cite{birat}).

\medskip

 We now no longer assume that $T^+ \in L^1(T^-)$.
One difficulty is that 
 we have to justify the existence of the local wedge products  $T_Q^+ \wedge T^-$, etc. This is ensured by the following lemma (which, it is perhaps worth stressing, does not require homogeneity of $X$).

\begin{lem}
There exists $S_\qq^+$ arbitrary close to $T_\qq^+$ such that  $G^- \in L_{loc}^1(S_\qq^+)$. 
Likewise, there   exists $S_\qq^-$ arbitrary close to $T_\qq^-$ such that  $G^+ \in L_{loc}^1(S_\qq^-)$.
\end{lem}

\begin{proof}
Since $T^- \in {\mathcal E}(T^+)$, by the first item of Proposition \ref{pro:plp}
 $G^-$ is not $-\infty$ a.e. on $T^+$. Thus, given a cube $Q$,  $G^-$ is not identically $-\infty$ on  $T_Q^+$. Now
$
T^+_Q=\int [\Delta_\a] d\nu_Q^+(\a),
$
hence for $\nu^+$-a.e. disk $\Delta_\a$,
$\Phi(\a):=\int_{\Delta_\a} G^-$ is finite. Consider now 
$$
\nu^+_N:=\nu^+\rest{\set{\Phi>-N}}
\text{ and }
T_{Q,N}^+:=\int [\Delta_\a] d\nu^+_N(\a).
$$
Then by construction $G^- \in L^1(T^+_{Q,N})$ and $T^+_{Q,N}$ is arbitrarily close to
$T_Q^+$. 

The argument for  $S_\qq^-$ is similar. 
\end{proof}

>From now on we replace the $T_\qq^\pm$ with the $S_\qq^\pm$ as given by the previous lemma, so  all the local wedge products ($T_Q^+\wedge T_Q^-$, $T_Q^+\wedge T^-$, 
$T^+\wedge T_Q^-$)  are well defined in the usual $L^1$ sense.
We further assume that $S_\qq^\pm$ is so close to $T_\qq^\pm$ that the estimate \eqref{eq:mass} is satisfied.

We now  need to justify the inequalities $T_\qq^+\wedge T^-_\qq\leq T^+\wedge T^-$, etc. This follows from the following simple observation: let 
$T^-_j = \omega^- + dd^c\max(G^-, -j)$. We have that $T^+\wedge T^-_j\cv T^+\wedge T^-$ (by the energy approach) and  
$T^+_\qq\wedge T^-_j\cv T^+_\qq\wedge T^-$ (by the classical approach) as $j\cv\infty$. Since $T^+_\qq\wedge T^-_j\leq T^+\wedge T^-_j$ we conclude that $T^+_\qq\wedge T^-\leq T^+\wedge T^-$. 
In the same way we obtain that $T^+\wedge T^-_\qq \leq T^+\wedge T^-$. 
The inequality $T^+_\qq\wedge T^-_\qq\leq T^+_\qq\wedge T^-$ is obvious since all wedge products are defined in the classical sense.

Starting from here the proof is identical to the $L^1$ case. \end{proof}

\section{Homogeneous weights}\label{sec:homogeneous}

In this section we give criteria allowing to verify in practice (see \S\ref{subs:irrational}) 
the finite energy condition for the 
{\em homogeneous weights} $\chi(t) = -(-t)^p$, with $0<p\leq 1$.

\medskip

Recall from \eqref{T+} and \eqref{T-} that we can write  $T^\pm = \omega^\pm+dd^cG^\pm$. 
To analyze the potential $G^+$, it is easier to  use the function $\Gamma^+$ as an intermediary.  
Indeed, Proposition 2.4 in \cite{ddg1} implies that 
$\Gamma^+ \geq A\log\dist(\cdot,I^+)-B$ for some $A,B>0$, where $I^+\subset X$ 
is the indeterminacy set of the map $f$. 
Let us call an indeterminacy point 
$$
p\in I^+ \text{ \emph{spurious} if } f(p)\cdot\omega^+ = 0. 
$$
At every non-spurious point, 
we also have the reverse inequality $\Gamma^+ \leq A\log\dist(\cdot, p)-B$. 

Similar inequalities hold for $\Gamma^-$.  
We let $E^+$ denote the \emph{exceptional set} of $f$, i.e. the union of those curves collapsed 
by $f$ to points, and set $I^- = f(E^+)$.  Then $\Gamma^- \geq A\log\dist(\cdot,I^-) - B$ 
with the reverse inequality holding if and only $f^{-1}(p)\cdot \omega^- \neq 0$ for every $p\in I^-$.
Likewise, we say that a point in 
$$
p \in I^- \text{ is {\em spurious} if } f^{-1}(p)\cdot \omega^- = 0.
$$
We refer the reader to \cite{ddg1} for more about spurious points.  Here we point out only that: 
\begin{enumerate}
\item if the invariant cohomology classes $\{T^+\}$, $\{T^-\}$ are K\"ahler, then
there is no spurious point of indeterminacy in resp. $I^+$, $I^-$;
\item when $f$ is bimeromorphic, we can always perform a bimeromorphic
change of coordinates to get rid of spurious indeterminacy \cite[Prop. 4.1]{bd}.
\end{enumerate}
It is plausible that a similar result holds when the topological degree
$\lambda_2(f)$ is merely smaller than the first dynamical degree $\l=\lambda_1(f)$.
We refer the reader to \S 5 in \cite{ddg1} for some results in
this direction. 

\medskip

The Lelong numbers of $T^+$ vanish on $I^-$ (see \cite[Theorem 2.4]{ddg1}).
Hence Proposition \ref{pro:energycriter} gives

\begin{pro}
We have $\Gamma^- \in \E(T^+,\omega^-)$.
\end{pro}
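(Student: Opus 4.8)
The plan is to derive this proposition as a direct application of the general criterion in Proposition \ref{pro:energycriter}, with $\varphi = \Gamma^-$, $\alpha = \omega^-$, $T = T^+$ and $F = I^-$. The entire task then reduces to verifying the three hypotheses of that criterion.

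First I would check that $\Gamma^-$ lies in $PSH(X,\omega^-)$. From the relation $dd^c\Gamma^- = \lambda_1^{-1}f_*\omega^- - \omega^-$ (which follows from $T^- = \lambda_1^{-1}f_*T^-$ together with $G^- = \Gamma^- + \lambda_1^{-1}f_*G^-$, read off from \eqref{T-}), one gets $\omega^- + dd^c\Gamma^- = \lambda_1^{-1}f_*\omega^- \geq 0$, since the pushforward of a positive closed $(1,1)$ current is again positive and closed. Next, the set $I^- = f(E^+)$ is finite, as $E^+$ is a finite union of curves each contracted by $f$ to a point. Away from $I^-$ the function $\Gamma^-$ is locally bounded: it is locally bounded above because it is qpsh, and locally bounded below by the estimate $\Gamma^- \geq A\log\dist(\cdot,I^-) - B$ recalled in the text (the analogue for $\Gamma^-$ of \cite[Prop. 2.4]{ddg1}), which stays bounded off $I^-$. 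Hence $\Gamma^- \in PSH(X,\omega^-)\cap L^\infty_{loc}(X\setminus I^-)$, the first hypothesis of Proposition \ref{pro:energycriter}.

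For the "moreover" part I would invoke two further inputs from \cite{ddg1}. The lower bound $\Gamma^- \geq A\log\dist(\cdot,I^-) - B$, after replacing $A$ by $A' := \max(A,B)$ (harmless since $\log\dist(\cdot,I^-) < 0$ near the finite set $I^-$), takes exactly the normalized form $\Gamma^- \geq A'\log\dist(\cdot,I^-) - A'$ required in the statement. And the Lelong numbers of $T^+$ vanish along $I^-$ by \cite[Theorem 2.4]{ddg1}, i.e. $\nu(T^+,p) = 0$ for every $p\in I^-$. With all three hypotheses in hand, Proposition \ref{pro:energycriter} yields $\Gamma^- \in \E(T^+,\omega^-)$.

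There is essentially no serious obstacle here: the genuine content — the logarithmic singularity estimate for $\Gamma^-$ and the vanishing of the Lelong numbers of $T^+$ on $I^-$ — is imported from \cite{ddg1}, while the implication from these facts to finiteness of the $T^+$-energy is already packaged in Proposition \ref{pro:energycriter}. The only points requiring a moment's care are bookkeeping ones: confirming that $\Gamma^-$ is honestly $\omega^-$-psh rather than merely qpsh, and matching the constants in the lower bound to the form $A\log\dist(\cdot,F) - A$ demanded by the criterion.
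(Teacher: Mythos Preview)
Your proposal is correct and follows exactly the paper's approach: the paper simply notes that the Lelong numbers of $T^+$ vanish on $I^-$ (by \cite[Theorem 2.4]{ddg1}) and invokes Proposition \ref{pro:energycriter}. You have merely spelled out the hypothesis checks (that $\Gamma^-$ is $\omega^-$-psh, that $I^-$ is finite, and that the logarithmic lower bound holds) which the paper leaves implicit.
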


A similar result holds for $\Gamma^+ / T^-$ if we know that $T^-$ has zero Lelong
number at each point in $I^+$ (which we do e.g. when $\lambda_2(f)=1$).
\medskip

The main theme in this section is that for homogeneous weights, it is possible to pass 
from control on energy of $\Gamma^\pm$ to control on that of $G^\pm$. This idea originates in \cite{bd}.
We set
$$
\E^p(T,\om):=\E_{\chi}(T,\om)
\text{ and } \nabla^p(T,\om):=\nabla_{\chi}(T,\om)
$$
where
$\chi(t)=-(-t)^p$, $ 0 <p \leq 1$.

\begin{pro} \label{pro:homograd}
Fix $0<p \leq 1$. 
Then 
$$
\Gamma^+ \in {\mathcal E}^p(T^-,\omega^+)
\text{ if and only if }
G^+ \in \nabla^p(T^-,\omega^+).
$$ 

If $\l^p>\lambda_2(f)$ then we similarly have that
$$
\Gamma^- \in {\mathcal E}^p(T^+,\omega^-)
\text{ if and only if }
G^- \in \nabla^p(T^+,\om^-).
$$
\end{pro}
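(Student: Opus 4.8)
The plan is to establish both equivalences by the same two–step strategy: a soft monotonicity argument for the implication from $G^\pm$ back to $\Gamma^\pm$, and a dynamical Cauchy--Schwarz estimate along the defining series for the reverse one. A useful preliminary reduction is that, for $0<p\le 1$, the classes $\E^p$ and $\nabla^p$ coincide as subsets of $PSH(X,\alpha)$: by Proposition \ref{pro:grad} they differ only by the requirement $\chi\circ\f\in L^1(\alpha\wedge T)$, where $\chi(t)=-(-t)^p$, and this is automatic. Indeed $(-\f)^p\le 1+(-\f)$, so $\chi\circ\f\ge\f-1$, while every $\f\in PSH(X,\alpha)$ lies in $L^1(\alpha\wedge T)$ because $\alpha$ has bounded potentials (integrate by parts against a bounded potential of $\alpha$, using $-dd^c\f\le\alpha$ and the integrability of qpsh functions against trace measures of positive closed currents). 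Hence the statement reduces to $\Gamma^\pm\in\E^p\Leftrightarrow G^\pm\in\E^p$, and the $L^1$ subtleties of Remark \ref{rqe:nrgl1} and Proposition \ref{pro:energycriter} never intervene.

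Granting this, the implication $G^\pm\in\E^p\Rightarrow\Gamma^\pm\in\E^p$ is immediate from Corollary \ref{cor:ineqfdtal}. Normalizing $\Gamma^+\le 0$, all terms of the series \eqref{T+}, $G^+=\sum_{n\ge 0}\l^{-n}\Gamma^+\circ f^n$, are $\le 0$, so $G^+\le\Gamma^+\le 0$ with $\Gamma^+\in PSH(X,\om^+)$; Corollary \ref{cor:ineqfdtal} then yields $\Gamma^+\in\E^p(T^-,\om^+)$ from $G^+\in\E^p(T^-,\om^+)$. The same argument works for $G^-=\sum_{n\ge 0}\l^{-n}f^n_*\Gamma^-\le\Gamma^-\le 0$ (the pushforward of a nonpositive function is nonpositive), and it requires no relation between $\l^p$ and $\lambda_2$.

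The real content is the reverse implication, which adapts the argument of \cite{bd}. Working with the canonical truncations of $\Gamma^+$ and of $G^+$ throughout and passing to the limit at the end, expand $dG^+=\sum_n\l^{-n}d(\Gamma^+\circ f^n)$ and apply Cauchy--Schwarz to the positive current $d(\cdot)\wedge d^c(\cdot)\wedge T^-$ to get $\int\chi'\circ G^+\,dG^+\wedge d^cG^+\wedge T^-\le\bigl(\sum_n\l^{-n}E_n^{1/2}\bigr)^2$ with $E_n=\int\chi'\circ G^+\,d(\Gamma^+\circ f^n)\wedge d^c(\Gamma^+\circ f^n)\wedge T^-$. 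Since all terms of the series are nonnegative, $-G^+\ge\l^{-n}(-\Gamma^+)\circ f^n$, so (as $p-1\le 0$) $\chi'\circ G^+\le\l^{n(1-p)}(\chi'\circ\Gamma^+)\circ f^n$; combined with the change of variables $(f^n)^*\bigl(\chi'\circ\Gamma^+\,d\Gamma^+\wedge d^c\Gamma^+\bigr)=(\chi'\circ\Gamma^+)\circ f^n\,d(\Gamma^+\circ f^n)\wedge d^c(\Gamma^+\circ f^n)$ and $(f^n)_*T^-=\l^nT^-$, this gives $E_n\le\l^{n(2-p)}C$ where $C=\int\chi'\circ\Gamma^+\,d\Gamma^+\wedge d^c\Gamma^+\wedge T^-<\infty$, i.e. $\Gamma^+\in\nabla^p(T^-,\om^+)$. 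Thus $\l^{-n}E_n^{1/2}\le C^{1/2}\l^{-np/2}$, and the series converges since $\l>1$; this proves the first equivalence. For $G^-=\sum_n\l^{-n}f^n_*\Gamma^-$ the scheme is identical, with one additional ingredient: since $f^n$ is generically $\lambda_2^n$-to-one, a Cauchy--Schwarz over the $\lambda_2^n$ inverse branches gives $d(f^n_*\Gamma^-)\wedge d^c(f^n_*\Gamma^-)\le\lambda_2^n\,f^n_*\bigl(d\Gamma^-\wedge d^c\Gamma^-\bigr)$. Combining this with the projection formula $h\cdot f^n_*R=f^n_*((h\circ f^n)R)$, the adjunction $\int f^n_*R\wedge T^+=\int R\wedge(f^n)^*T^+$, the invariance $(f^n)^*T^+=\l^nT^+$, and the comparison $\chi'\circ G^-\circ f^n\le\l^{n(1-p)}\chi'\circ\Gamma^-$ (which follows from $-G^-\circ f^n\ge\l^{-n}(-\Gamma^-)$), one obtains $E_n\le\lambda_2^n\l^{n(2-p)}C'$, hence $\l^{-n}E_n^{1/2}\le C'^{1/2}(\lambda_2/\l^p)^{n/2}$; this series converges precisely when $\l^p>\lambda_2$, which is exactly the extra hypothesis.

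The main obstacle is not any single inequality but making this reverse implication fully rigorous: the series defining $G^\pm$ does not commute with the truncation $\max(\cdot,-j)$, the wedge products of unbounded potentials must be handled by Bedford--Taylor truncation, and the pullback/pushforward identities for the meromorphic map $f$ must be justified on the relevant plurifine sets before the change-of-variables formulas can be applied; transferring the resulting uniform energy bounds to the canonical approximants $G^\pm_j$ requires the comparison inequalities of \S\ref{sec:pluripot} (Lemma \ref{lem:ineqfdtal} and Corollary \ref{cor:ineqfdtal}). This is the same bookkeeping already carried out for $p=1$, $\lambda_2=1$ in \cite{bd}.
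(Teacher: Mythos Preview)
Your preliminary reduction---that $\E^p(T,\alpha)=\nabla^p(T,\alpha)$ for all $0<p\le 1$---is incorrect. The justification rests on the claim that every $\varphi\in PSH(X,\alpha)$ automatically lies in $L^1(\alpha\wedge T)$, which you attribute to ``integrability of qpsh functions against trace measures of positive closed currents''; this last assertion is false. For a concrete counterexample on $\P^2$, take $\alpha$ to be the Fubini--Study form, $T=[\ell]$ for a line $\ell$, and $\varphi$ a global $\alpha$-psh potential for $[\ell]$: then $\varphi\in PSH(X,\alpha)$ yet $\varphi\equiv-\infty$ on the support of $\alpha\wedge T$. The integration-by-parts you sketch is circular, since splitting $\int(-\varphi)\,\alpha\wedge T=\int(-\varphi)\,\theta\wedge T+\ldots$ already presupposes $\varphi\in L^1(T\wedge\omega)$ for the smooth piece $\theta$. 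The paper is well aware of this obstruction: it is precisely why Remark~\ref{rqe:nrgl1} and the Corollary immediately following the proposition impose the extra hypothesis $T^+\in L^1(T^-)$ in order to upgrade $G^+\in\nabla^p$ to $G^+\in\E^p$.

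Once this false reduction is set aside, what remains of your argument is the paper's proof. The substantive direction $\Gamma^+\in\E^p\Rightarrow G^+\in\nabla^p$ is obtained by exactly the same Cauchy--Schwarz estimate along the series $G^+=\sum_{n\ge 0}\l^{-n}\Gamma^+\circ f^n$, exploiting the homogeneity inequality $\chi'\circ G^+\le\l^{n(1-p)}(\chi'\circ\Gamma^+)\circ f^n$ together with $(f^n)_*T^-=\l^nT^-$; the $G^-$ case picks up the extra factor $\lambda_2^n$ from the pushforward inequality $i\,f_*\eta\wedge\overline{f_*\eta}\le\lambda_2\,f_*(i\,\eta\wedge\bar\eta)$, just as you describe. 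For the other direction the paper, like you, simply invokes the monotonicity of Corollary~\ref{cor:ineqfdtal} from $G^+\le\Gamma^+\le 0$ to obtain $G^+\in\E_\chi\Rightarrow\Gamma^+\in\E_\chi$. (If one wants to close the literal implication $G^+\in\nabla^p\Rightarrow\Gamma^+\in\E^p$, the point is that $\Gamma^+\in L^1(T^-)$ always holds by Proposition~\ref{pro:energycriter}, so for $\Gamma^+$ specifically one has $\Gamma^+\in\nabla^p\Leftrightarrow\Gamma^+\in\E^p$; this is a property of $\Gamma^+$, not a general identity $\E^p=\nabla^p$.)
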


\begin{proof} 
Since $\Gamma^+$ and $G^+$ are both $\omega^+$-psh functions such that 
$G^+ \leq \Gamma^+ \leq 0$, it follows from Corollary \ref{cor:ineqfdtal} that
$$
G^+ \in {\mathcal E}_{\chi}(T^-, \om^+) \Longrightarrow \Gamma^+ \in {\mathcal E}_{\chi}(T^-, \om^+),
$$
for any weight $\chi \in {\mathcal W}$.

Assume conversely now that $\Gamma^+ \in {\mathcal E}_{\chi}(T^-, \om^+)$ for the special homogeneous weight $\chi(t)=-(-t)^p$, $0 <p \leq 1$.  Since $\chi'\circ G^+ \leq \chi'(\l^{-j} \Gamma^+ \circ f^j)
=\l^{-j(p-1)} \chi'(\Gamma^+ \circ f^j)$, it follows from the Cauchy-Schwarz inequality
and $(f^j)_* T^-=\l^j T^-$ that
\begin{eqnarray*}
\lefteqn{
\left( \int_X \chi' \circ G^+ \, d G^+\wedge d^c G^+ \wedge T^- \right)^{1/2}  } \\
&& \leq \sum_{j \geq 0} \frac{1}{\l^{j}} 
\left( \int_X \chi' \circ G^+ d(\Gamma^+ \circ f^j) \wedge d^c (\Gamma^+\circ f^j) 
\wedge T^- \right)^{1/2} \\
&&\leq \sum_{j \geq 0} \frac{1}{\l^{jp/2}} 
\left( \int_X \chi' \circ \Gamma^+ d\Gamma^+ \wedge d^c \Gamma^+ \wedge T^- \right)^{1/2}<+\infty.
\end{eqnarray*}
Hence $G^+ \in \nabla_{\chi}(T^-,\om^+)$.

For $\Gamma^-$ and $G^-$, the proof is very similar except that in the estimate analogous to the one in the previous display, pushforward does not distribute over products as well as pullback. If $\eta$ is a $(1,0)$ form, we have only an 
inequality
$$
i\, f_* \eta \wedge \overline{f_* \eta} \leq \lambda_2(f) \cdot  f_*(i \, \eta \wedge \overline{\eta}).
$$
Arguing as above and setting 
$E_{\chi}(\Gamma^-)=\int \chi' \circ \Gamma^- d\Gamma^- \wedge d^c \Gamma^- \wedge T^+$, we thus get
$$
\left( \int_X \chi' \circ G^- \, d G^-\wedge d^c G^- \wedge T^+ \right)^{1/2} 
\leq \sum_{j \geq 0} \left(\frac{\lambda_2}{\l^p} \right)^{j/2} 
\left[E_{\chi}(\Gamma^-)\right]^{1/2},
$$
which is finite if $\l^p>\lambda_2$.
\end{proof}

Since $L^1\subset L^p$ for $0<p\leq 1$, Propositions \ref{pro:grad} and \ref{pro:homograd} directly imply

\begin{cor}  Suppose $0<p\leq 1$ and $T^+\in L^1(T^-)$.  Then $\Gamma^+$ belongs to $\E^p(T^-,\omega^+)$ if and only if $G^+$ does.  If in addition $\lambda_1^p>\lambda_2$, then $\Gamma^-$ belongs to $\E^p(T^+,\omega^-)$ if and only if $G^-$ does. 
\end{cor}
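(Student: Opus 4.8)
The plan is to combine Propositions~\ref{pro:grad} and~\ref{pro:homograd}, the key observation being that the hypothesis $T^+\in L^1(T^-)$ makes the auxiliary $L^1$-integrability condition in Proposition~\ref{pro:grad} automatic. Throughout I would write $\chi(t)=-(-t)^p$, so that $\E^p=\E_\chi$ and $\nabla^p=\nabla_\chi$, and normalize $G^\pm\le 0$.

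First I would set up the chain of equivalences for $\Gamma^+$. By Proposition~\ref{pro:homograd}, $\Gamma^+\in\E^p(T^-,\omega^+)$ if and only if $G^+\in\nabla^p(T^-,\omega^+)$. By Proposition~\ref{pro:grad}, $G^+\in\E^p(T^-,\omega^+)$ if and only if $G^+\in\nabla^p(T^-,\omega^+)$ \emph{and} $\chi\circ G^+\in L^1(\omega^+\wedge T^-)$. Hence the first equivalence of the corollary reduces to showing that, under the standing hypothesis $T^+\in L^1(T^-)$, one always has $\chi\circ G^+\in L^1(\omega^+\wedge T^-)$.

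For that last point I would argue as follows. The condition $T^+\in L^1(T^-)$ says exactly that $G^+$ is integrable against the (finite) trace measure of $T^-$; since $0<p\le 1$ we have $(-G^+)^p\le 1+(-G^+)$, so $\chi\circ G^+\in L^1(T^-)$ — this is just the inclusion $L^1\subset L^p$ on a finite measure space. It then remains to pass from integrability against the trace of $T^-$ to integrability against $\omega^+\wedge T^-$, which is precisely what Remark~\ref{rqe:nrgl1} provides. That remark is stated for weights with $\chi'$ bounded above, which fails near the origin when $p<1$; to get around this I would simply replace $\chi$ by the weight $\max(\chi(t),t)\in\W$, which coincides with $\chi$ on $(-\infty,-1]$ and has bounded derivative, noting that this change alters neither $\E^p$, nor $\nabla^p$, nor the truth of ``$\chi\circ G^+\in L^1(\,\cdot\,)$'' (the two weights differ by a bounded function). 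This establishes the first equivalence.

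The second equivalence is obtained the same way after interchanging the roles of $+$ and $-$: one uses the second halves of Propositions~\ref{pro:grad} and~\ref{pro:homograd} — the latter being exactly where the extra hypothesis $\lambda_1^p>\lambda_2$ enters — together with the symmetry $T^+\in L^1(T^-)\Leftrightarrow T^-\in L^1(T^+)$ recorded earlier, which guarantees that $G^-$ is integrable against the trace of $T^+$. The only step that rises above bookkeeping is the transition from $L^1(T^-)$ to $L^1(\omega^+\wedge T^-)$, handled by Remark~\ref{rqe:nrgl1} as above; I do not anticipate any genuine obstacle.
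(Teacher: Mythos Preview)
Your proof is correct and follows essentially the same route as the paper, which records the corollary as an immediate consequence of Propositions~\ref{pro:grad} and~\ref{pro:homograd} together with the inclusion $L^1\subset L^p$. You are in fact more careful than the paper on one point: the paper tacitly invokes Remark~\ref{rqe:nrgl1} even though $\chi'(t)=p(-t)^{p-1}$ is unbounded near $0$ when $p<1$, whereas you explicitly repair this by passing to the weight $\tilde\chi(t)=\max(\chi(t),t)$, which differs from $\chi$ by a bounded function and so leaves all the relevant integrability conditions unchanged.
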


Having largely reduced the problem of controlling $G^\pm$ to that of controlling $\Gamma^\pm$, we now seek effective means of accomplishing the latter.  The computations below will be the same for $\Gamma^+$ and $\Gamma^-$, so we work only with $\Gamma^+$.  We let $\Omega = \omega^+ + c\omega$. For $c>0$ large enough, both $\Gamma^+$ and $\varphi := \log\dist(\cdot,I^+)$ are $\Omega$-psh functions.  Since $\Gamma^+ \geq A\varphi - B$, Lemma \ref{lem:ineqfdtal} gives

\begin{pro} 
\label{pro:logdist}
Let $\chi$ be any weight function.  Then $\varphi\in \E_\chi(T^-,\Omega)$ implies that $\Gamma^+\in \E_\chi(T^-,\Omega)$.
\end{pro}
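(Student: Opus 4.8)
The plan is to reduce the claim to Corollary \ref{cor:ineqfdtal}, which propagates membership in $\E_\chi(T^-,\Omega)$ from a given function to any less singular $\Omega$-psh function lying below $0$. Since $\Gamma^+$ is $\Omega$-psh, bounded above (we normalize so that $\Gamma^+\le 0$), and dominates $A\varphi-B$, it suffices to prove that the \emph{more} singular function $A\varphi-B$ lies in $\E_\chi(T^-,\Omega)$; then Corollary \ref{cor:ineqfdtal} applied with $u=A\varphi-B\le v=\Gamma^+\le 0$ yields $\Gamma^+\in\E_\chi(T^-,\Omega)$.

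Thus the real work is to check that $\E_\chi(T^-,\Omega)$ is stable under the affine substitution $\varphi\mapsto A\varphi-B$. I would first normalize so that $\varphi=\log\dist(\cdot,I^+)\le 0$, and enlarge $c$ if necessary so that, besides $\varphi$ and $\Gamma^+$, the function $A\varphi$, and hence also $A\varphi-B$ and every $A\varphi_j-B$ with $\varphi_j:=\max(\varphi,-j)$, is $\Omega$-psh. The functions $A\varphi_j-B$ then form a decreasing sequence of bounded $\Omega$-psh functions, all $\le 0$, converging to $A\varphi-B$; so by Lemma \ref{lem:ineqfdtal} and the remark following it, this sequence may replace the canonical approximants when testing membership in $\E_\chi(T^-,\Omega)$. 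It then remains to bound $\int(-\chi\circ(A\varphi_j-B))\,[\Omega+dd^c(A\varphi_j-B)]\wedge T^-$ uniformly in $j$.

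This bound rests on two elementary comparisons. Since $dd^c(A\varphi_j-B)=A\,dd^c\varphi_j$, positivity of $\Omega$ gives the inequality $\Omega+A\,dd^c\varphi_j\le \max(1,A)(\Omega+dd^c\varphi_j)+\max(0,1-A)\,\Omega$ of positive $(1,1)$-currents, hence of measures after wedging with $T^-$. On the weight side, convexity of $\chi$ together with $\chi(0)=0$ and finiteness of $\chi'(0)$ give $(-\chi)(At-B)\le \max(1,A)(-\chi)(t)+B\,\chi'(0)$ for every $t\le 0$. Inserting both, the integral is dominated by a fixed linear combination of $\int(-\chi\circ\varphi_j)\,[\Omega+dd^c\varphi_j]\wedge T^-$, of $\int(-\chi\circ\varphi_j)\,\Omega\wedge T^-$, and of the total mass $\{\Omega\}\cdot\{T^-\}$. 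The first is bounded uniformly in $j$ because $\varphi\in\E_\chi(T^-,\Omega)$ by hypothesis; the second because the same hypothesis forces $\chi\circ\varphi\in L^1(\Omega\wedge T^-)$ by Proposition \ref{pro:grad}; the third is constant. Hence $A\varphi-B\in\E_\chi(T^-,\Omega)$, which completes the argument.

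The one point that requires care, and which I would call the main obstacle even though it is really bookkeeping, is precisely this affine change of variables: the constant $A$ in $\Gamma^+\ge A\varphi-B$ cannot in general be taken $\le 1$, so one cannot simply invoke Corollary \ref{cor:ineqfdtal} with $u=\varphi$, and one must verify separately that scaling the potential by a positive constant, and subtracting a constant, each preserve $\E_\chi$. For the homogeneous weights $\chi(t)=-(-t)^p$ relevant to this section these two steps are transparent, since $\chi(At)=A^p\chi(t)$ and $\chi(t)-B^p\le\chi(t-B)\le\chi(t)$, so a reader content with that case may skip the convexity estimates above.
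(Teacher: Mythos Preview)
Your proof is correct and follows the same route as the paper, which dispatches the proposition in a single line by invoking Lemma~\ref{lem:ineqfdtal} (really Corollary~\ref{cor:ineqfdtal}) from the inequality $\Gamma^+\ge A\varphi-B$. You are more careful than the paper on the one point it leaves implicit: since $A$ need not be $\le 1$, one cannot literally take $u=\varphi$ in Corollary~\ref{cor:ineqfdtal}, and your verification that $\E_\chi(T^-,\Omega)$ is stable under $\varphi\mapsto A\varphi-B$ (after enlarging $c$) is exactly what is needed to make the paper's one-liner rigorous.
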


Hence we consider weighted energy of $\varphi$.  Let $\varphi_j$ be the canonical approximants.  Then up to finite additive constants we have
\begin{eqnarray*}
0 & \leq & -\int \chi\circ\varphi_j (\Omega + dd^c\varphi_j) \wedge T^- = -\int -G^- dd^c\varphi_j \wedge dd^c\chi\circ\varphi_j \\
& = & \int -G^- (\chi''\circ\varphi_j)\,d\varphi_j\wedge d^c\varphi_j\wedge dd^c\varphi_j + \int -G^-(\chi'\circ\varphi_j)\,(dd^c\varphi_j)^2 \\
& \leq & \int_{X\setminus I^+} -G^- \chi''\circ\varphi\,d\varphi\wedge d^c\varphi\wedge dd^c\varphi + \int_{X\setminus I^+} -G^-\chi'\circ\varphi\,(dd^c\varphi)^2
+ \int -G^- (\chi'\circ\varphi) \nu_j \\
& = & I + II + III,
\end{eqnarray*}
where $\nu_j = (dd^c\varphi_j)^2|_{\varphi = -j}$ is a positive measure.  Note also that $d\varphi_\wedge d^c\varphi_j\wedge dd^c\varphi_j^2$ puts no mass on $\varphi = -j$.  One can verify this by replacing
$\max\{\cdot,j\}$ with a smooth convex approximation in the definition of $\varphi_j$ and then computing directly.

To bound integral I, we compute in local coordinates that
$$
d\varphi\wedge d^c\varphi\wedge dd^c\varphi \leq \frac{C\,dV(x)}{\dist(x,I^+)^4}
$$
So if $m^-(t)$ is the spherical mean of $G^-$ on the set $\{\dist(p,I^+) = e^t \}$, we get
$$
I \leq C\int_{-\infty}^{\max \varphi} -m^-(t)\chi''(t)\,dt, 
$$
We claim that II is always finite.  To see this, let $\pi:\hat X\to X$ be the blowup of $X$ along the finite set $I^+$.  Then direct computation in local coordinates about $I^+$ reveals that on $X\setminus I^+$, one has $(dd^c\varphi\circ \pi)^2\leq dV$, where $dV$ is a smooth volume form on $\hat X$.  Since $G^-\circ\pi$ is $\pi^*\alpha^-$-psh on $\hat X$, this gives us that
$$
II = \int_{\hat X\setminus\pi^{-1}(I^+)} -(G^-\circ\pi)(\chi'\circ\varphi\circ\pi)\,(dd^c\varphi\circ\pi)^2 \leq 
     \int_{\hat X\setminus\pi^{-1}(I^+)} -G^-\circ\pi\,dV < \infty.
$$
Finally, to deal with III, we note that for $j$ large $\nu_j$ is uniformly (in $j$) proportional to normalized spherical measure on $\{\dist(p,I^+) = e^{-j}\}$.  Hence we obtain that
$$
III \leq -C m^-(-j)\chi'(-j).
$$
With these estimates we arrive at two conclusions very much in the spirit of \cite{bd}.  We state them only for $\Gamma^+$, but the analogous assertions for $\Gamma^-$ are equally valid and proved in the same way.

\begin{cor} 
\label{cor:e1}
If $G^-$ is finite at all points in $I^+$, then $\Gamma^+ \in \E^1(T^-,\om^+)$.  Hence also $G^+ \in \nabla^1(T^-,\om^+)$.  If moreover $T^+\in L^1(T^-)$ then $G^+\in \mathcal{E}^1(T^-, \omega^+)$.
\end{cor}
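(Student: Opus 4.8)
The plan is to feed the weight $\chi(t)=t$ into the three-term estimate
\[
0\le \int(-\chi\circ\varphi_j)\,(\Omega+dd^c\varphi_j)\wedge T^- \le I+II+III
\]
established just above the statement, with $\varphi=\log\dist(\cdot,I^+)$, $\varphi_j=\max(\varphi,-j)$ and $\Omega=\om^++c\om$. For this $\chi$ one has $\chi''\equiv 0$, so $I=0$; the term $II$ is finite by the blow-up argument already carried out (it uses only that $G^-\circ\pi$ is quasiplurisubharmonic, hence $L^1$, on $\hat X$); and $III\le -C\,m^-(-j)$, where $m^-(t)$ is the spherical mean of $G^-$ over $\{\dist(\cdot,I^+)=e^{t}\}$. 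So everything reduces to bounding $m^-(-j)$ from below, uniformly in $j$.

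This is the one substantial step. For $j$ large, $\{\dist(\cdot,I^+)=e^{-j}\}$ is a finite disjoint union of Euclidean spheres, one around each of the finitely many points $p\in I^+$, and $m^-(-j)$ is a weighted average of the spherical means of $G^-$ about these points. Near a fixed $p$, write $G^-=v+h$ with $v$ plurisubharmonic and $h$ smooth; the sub-mean-value inequality for $v$ shows that its spherical mean about $p$ at any small radius is $\ge v(p)=G^-(p)-h(p)$, which is finite precisely because $G^-$ is finite at $p$. Averaging over the finitely many $p\in I^+$ gives $\inf_j m^-(-j)>-\infty$, hence $\sup_j III<\infty$, so $\sup_j\int(-\varphi_j)(\Omega+dd^c\varphi_j)\wedge T^-<\infty$; that is, $\varphi\in\E^1(T^-,\Omega)$. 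Since $\Gamma^+\ge A\varphi-B$, Proposition \ref{pro:logdist} then yields $\Gamma^+\in\E^1(T^-,\Omega)$.

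It remains to pass from $\Omega$ back to $\om^+$ and to deduce the statements for $G^+$. Normalizing $\Gamma^+\le 0$, the approximants $\Gamma^+_j=\max(\Gamma^+,-j)$ are $\om^+$-psh and nonpositive, so both $(\om^++dd^c\Gamma^+_j)\wedge T^-$ and $c\,\om\wedge T^-$ are positive measures; hence $\int(-\Gamma^+_j)(\om^++dd^c\Gamma^+_j)\wedge T^-\le\int(-\Gamma^+_j)(\Omega+dd^c\Gamma^+_j)\wedge T^-$, whose supremum over $j$ is finite. Thus $\Gamma^+\in\E^1(T^-,\om^+)$, which is the first claim, and $G^+\in\nabla^1(T^-,\om^+)$ follows at once from Proposition \ref{pro:homograd} with $p=1$. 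For the last claim, $\chi(t)=t$ has bounded derivative, so by Remark \ref{rqe:nrgl1} the integrability condition $\chi\circ G^+\in L^1(\om^+\wedge T^-)$ appearing in Proposition \ref{pro:grad} reduces to $G^+\in L^1(T^-)$, i.e.\ to the hypothesis $T^+\in L^1(T^-)$; together with $G^+\in\nabla^1(T^-,\om^+)$, Proposition \ref{pro:grad} gives $G^+\in\E^1(T^-,\om^+)$. The only genuine obstacle is the uniform lower bound on $m^-(-j)$; once $G^-$ is known finite on $I^+$ this is elementary, and the rest is bookkeeping with the results of this section.
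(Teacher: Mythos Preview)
Your proof is correct and follows essentially the same route as the paper: feed $\chi(t)=t$ into the $I+II+III$ estimate, use $\chi''\equiv 0$ to kill $I$, and bound $III$ via the (sub-)mean-value property of the quasi-psh function $G^-$ near points of $I^+$. You have merely been more explicit than the paper in two places---the passage from $\E^1(T^-,\Omega)$ back to $\E^1(T^-,\om^+)$ for the $\om^+$-psh function $\Gamma^+$, and the derivation of the final claim from Proposition~\ref{pro:grad} and Remark~\ref{rqe:nrgl1}---both of which the paper leaves implicit (the latter via the unnumbered corollary following Proposition~\ref{pro:homograd}).
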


\begin{proof}
By Propositions \ref{pro:logdist} and \ref{pro:homograd}, it suffices to verify that the above bounds on I and III are finite (and uniform as $j\to\infty$).  In this case, we have $\chi(t) = t$, and in particular $\chi'' \equiv 0$.  Hence I is trivially finite.

On the other hand, $\chi'\equiv 1$ and as $t\to -\infty$, $m^-(t)$ decreases to the average value of $G^-$ on the set $I^+$.  In particular the bound on III is uniform as $j\to\infty$ if $G^-$ is finite on $I^+$. 
\end{proof}

For other homogeneous weight functions, our estimates on I and III immediately give an analogous criterion.

\begin{cor}
\label{cor:ep}
Let (as above) $m^-(t)$ denote the mean of $G^-$ on $\{\dist(x,I^+)=e^t\}$.  Suppose for some $q\in (0,1)$ that 
$$
\limsup_{t\to-\infty} |t|^{q-1}|m^-(t)|<\infty.
$$  
Then for any $0<p< q$, we have $\Gamma^+\in \E^p(T^-,\om^+)$.  Hence $G^+\in \nabla^p(T^-,\om^+)$.  If additionally $T^+\in L^1(T^-)$, then $G^+\in \E^p(T^-,\om^+)$.
\end{cor}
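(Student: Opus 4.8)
The plan is to follow the proof of Corollary~\ref{cor:e1} almost verbatim, substituting the homogeneous weight $\chi(t)=-(-t)^p$ for $\chi(t)=t$ and reading off from the three-term estimate bounding $\int(-\chi\circ\varphi_j)(\Omega+dd^c\varphi_j)\wedge T^-$ from above by $I+II+III$ — derived in the paragraphs just before Corollary~\ref{cor:e1} — exactly what decay of the spherical means $m^-$ is required. By Proposition~\ref{pro:logdist} it suffices to show that $\varphi:=\log\dist(\cdot,I^+)\in\E_\chi(T^-,\Omega)$, i.e. that $\sup_j\int(-\chi\circ\varphi_j)(\Omega+dd^c\varphi_j)\wedge T^-<\infty$; then $\Gamma^+\in\E_\chi(T^-,\Omega)=\E^p(T^-,\Omega)$, and the passage from the background form $\Omega=\omega^++c\omega$ to $\omega^+$ is legitimate because the singularities of $\varphi$ and $\Gamma^+$ sit on the finite set $I^+$, by an argument parallel to the last paragraph of the proof of Proposition~\ref{pro:energycriter}. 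Proposition~\ref{pro:homograd} then yields $G^+\in\nabla^p(T^-,\omega^+)$, and when $T^+\in L^1(T^-)$ the corollary stated immediately after Proposition~\ref{pro:homograd} upgrades this to $G^+\in\E^p(T^-,\omega^+)$.

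First I would substitute $\chi'(t)=p(-t)^{p-1}$ and $\chi''(t)=p(1-p)(-t)^{p-2}$ into the displayed bounds on $I$, $II$, $III$. The term $II$ is finite with no extra hypothesis, as already established via the blow-up $\pi:\hat X\to X$ along $I^+$. For $I$ one has $I\le C\int_{-\infty}^{\max\varphi}(-m^-(t))\,\chi''(t)\,dt$; the hypothesis $\limsup_{t\to-\infty}|t|^{q-1}|m^-(t)|<\infty$ gives $|m^-(t)|\le M|t|^{1-q}$ near $-\infty$, so the integrand is $O(|t|^{1-q}|t|^{p-2})=O(|t|^{p-q-1})$, which is integrable at $-\infty$ precisely because $p-q-1<-1$, i.e. because $p<q$ strictly. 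For $III$ the estimate reads $|III|\le C|m^-(-j)|\,\chi'(-j)\le CM\,j^{1-q}j^{p-1}=CM\,j^{p-q}\to 0$ as $j\to\infty$, since $p<q$; in particular it is bounded in $j$. As the bounds on $I$ and $II$ do not involve $j$, combining the three gives a bound uniform in $j$, hence $\varphi\in\E_\chi(T^-,\Omega)$, and the chain of implications above yields the corollary. (The analogue for $\Gamma^-$ and $G^-$ runs identically, the only difference being that applying Proposition~\ref{pro:homograd} to $\Gamma^-$ needs the extra hypothesis $\lambda_1^p>\lambda_2$.)

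I do not expect a genuine obstacle here: all the analytic input — the local bound $d\varphi\wedge d^c\varphi\wedge dd^c\varphi\le C\,dV/\dist(\cdot,I^+)^4$, the reduction of $I$, $II$, $III$ to spherical means of $G^-$, and the identification of $\nu_j$ with normalized spherical measure on $\{\dist(\cdot,I^+)=e^{-j}\}$ — is already carried out in the discussion preceding Corollary~\ref{cor:e1}, so the present statement is a short exponent computation on top of it. The only points that deserve to be spelled out are the role of the strict inequality $p<q$ (which is exactly what makes the $I$-integral converge and the $III$-bound decay, rather than merely remain bounded) and the harmless passage between the background forms $\Omega$ and $\omega^+$, together with the remark that, since the $I$- and $II$-bounds are independent of $j$ while the $III$-bound tends to $0$, one genuinely obtains the uniform-in-$j$ bound that membership in $\E_\chi$ requires.
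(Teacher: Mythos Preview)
Your proposal is correct and is exactly the argument the paper intends: the paper states this corollary with no proof beyond the remark that ``our estimates on I and III immediately give an analogous criterion,'' and you have simply written out the exponent check showing that $|m^-(t)|=O(|t|^{1-q})$ makes the bound on $I$ integrable and the bound on $III$ tend to zero precisely when $p<q$. The auxiliary remarks you add (passage from $\Omega$ to $\omega^+$, invocation of Proposition~\ref{pro:homograd} and its corollary) are also the intended reductions.
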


In some circumstances (e.g. polynomial maps of $\C^2$) it is possible to see directly that $T^+\in L^1(T^-)$.  Corollaries \ref{cor:e1} and \ref{cor:ep} are adequate by themselves for these situations.  In other circumstances, however, it is not easy to verify that $T^+\in L^1(T^-)$.  We show now that one can avoid doing this if there are no spurious points in $I^+$ or $I^-$.

\begin{thm}
\label{thm:nospurs}
Suppose that $0<p \leq 1$ is chosen so that $\lambda_1^p > \lambda_2$ 
and that $G^+ \in \nabla^p(T^-,\omega^+)$, $G^-\in\nabla^p(T^+,\omega^-)$.  
If there are no spurious points in $I^+$ or $I^-$, 
then it is further true that $G^+ \in {\mathcal E}^p(T^-,\omega)$ and 
$G^- \in {\mathcal E}^p(T^+,\omega)$.
\end{thm}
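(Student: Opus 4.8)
The plan is to extract from the granted gradient conditions the only ingredient still missing for finite energy — the $L^1$-integrability of $\chi\circ G^\pm$ — and then to verify that integrability by localising everything to the functions $\Gamma^\pm$ and summing over the iterates of $f$, much as in \cite{bd}. Concretely, put $\chi(t)=-(-t)^p$. Since $G^+\in\nabla^p(T^-,\omega^+)=\nabla_\chi(T^-,\omega^+)$ and $G^-\in\nabla^p(T^+,\omega^-)$ are among the hypotheses, Proposition~\ref{pro:grad} reduces the theorem to proving $\chi\circ G^+\in L^1(\omega^+\wedge T^-)$ and $\chi\circ G^-\in L^1(\omega^-\wedge T^+)$. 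Replacing $\chi$ by a weight that agrees with it on $(-\infty,-1]$ and is linear on $[-1,0]$ — this affects neither $\nabla^p$ nor $\E^p$, but makes $\chi'$ bounded — Remark~\ref{rqe:nrgl1} lets me pass to the trace measures, so it is enough to show that $\int_X(-G^+)^p\,T^-\wedge\omega$ and $\int_X(-G^-)^p\,T^+\wedge\omega$ are finite.

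The next step is to localise via $\Gamma^\pm$, and this is where the absence of spurious points is used. Because no point of $I^\pm$ is spurious, Proposition~2.4 of \cite{ddg1} gives the two-sided estimate $\Gamma^\pm = A\log\dist(\cdot,I^\pm)+O(1)$, so that $(-\Gamma^\pm)^p$ is bounded off the finite set $I^\pm$ and only $\log\dist(\cdot,I^\pm)^{-1}$-singular, raised to the power $p$, along it. Starting from $-G^+=\sum_{n\ge0}\l^{-n}(-\Gamma^+\circ f^n)$, with all summands nonnegative, subadditivity of $t\mapsto t^p$ on $[0,\infty)$ for $0<p\le1$ gives $(-G^+)^p\le\sum_{n\ge0}\l^{-np}(-\Gamma^+\circ f^n)^p$, whence
\[
\int_X(-G^+)^p\,T^-\wedge\omega\ \le\ \sum_{n\ge0}\l^{-np}\int_X(-\Gamma^+\circ f^n)^p\,T^-\wedge\omega .
\]
As $\l^p>1$, this series converges once the integrals $\int_X(-\Gamma^+\circ f^n)^p\,T^-\wedge\omega$ are bounded uniformly in $n$; and by the localisation of $\Gamma^+$ each such integral is controlled, up to a fixed additive constant, by the mass that the pushed-forward measure $(f^n)_*(T^-\wedge\omega)$ places near $I^+$, weighted by the very mild $(\log)^p$-singularity at $I^+$. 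The estimate for $G^-$ is entirely parallel, starting from $-G^-=\sum_{n\ge0}\l^{-n}f^n_*\Gamma^-$ and using $i\,f_*\eta\wedge\overline{f_*\eta}\le\lambda_2\,f_*(i\,\eta\wedge\overline\eta)$ exactly as in Proposition~\ref{pro:homograd}; the accompanying bookkeeping with the topological degree turns the convergence factor into one of ratio essentially $\lambda_2/\l^{p}$, which is precisely why the hypothesis $\l^p>\lambda_2$ is needed.

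The step I expect to be the main obstacle is the uniform-in-$n$ control of the $(\log)^p$-weighted mass of $(f^n)_*(T^-\wedge\omega)$ near $I^+$, and symmetrically of its $T^+$-counterpart near $I^-$: this is the one genuinely new point, everything else being assembled from \S\ref{sec:pluripot} and from the material preceding this theorem. I would establish it by using the dynamics — combining the vanishing of the Lelong numbers of $T^\pm$ along $I^\mp$ from \cite[Theorem~2.4]{ddg1} with the non-degeneracy of $f$ at non-spurious indeterminacy points — adapting to the homogeneous-weight setting the analysis carried out in \cite{bd} for the plain weight $\chi(t)=t$. Once this uniform mass bound is in hand, the two previous steps together with Proposition~\ref{pro:grad} give $G^+\in\E^p(T^-,\omega^+)$ and $G^-\in\E^p(T^+,\omega^-)$, that is, $f$ satisfies the finite-energy condition $(E_{\chi})$ for $\chi(t)=-(-t)^p$, which is the assertion.
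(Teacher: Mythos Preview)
Your reduction via Proposition~\ref{pro:grad} and Remark~\ref{rqe:nrgl1} is exactly right and matches the paper: what remains is to show $\chi\circ G^+\in L^1(T^-\wedge\omega)$ and $\chi\circ G^-\in L^1(T^+\wedge\omega)$. After that, however, your route diverges from the paper's and contains a genuine gap.

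You attack $\int(-G^+)^p\,T^-\wedge\omega$ directly by expanding $G^+$ and pushing forward, which leaves you needing a uniform-in-$n$ bound on $\int(-\Gamma^+)^p\,(f^n)_*(T^-\wedge\omega)$. You correctly flag this as ``the main obstacle'' but do not prove it, and the tools you invoke do not obviously suffice: the measures $(f^n)_*(T^-\wedge\omega)$ have fixed total mass but may concentrate near $I^+$ as $n\to\infty$, and nothing in the vanishing of Lelong numbers of $T^+$ along $I^-$ controls that. (Note also that Lelong numbers of $T^-$ along $I^+$ are \emph{not} known to vanish in general --- the paper remarks this explicitly after the proposition on $\Gamma^-\in\E(T^+,\omega^-)$ --- so your appeal to ``Lelong numbers of $T^\pm$ along $I^\mp$'' is half unjustified.) The inequality $i\,f_*\eta\wedge\overline{f_*\eta}\le\lambda_2\,f_*(i\,\eta\wedge\overline\eta)$ you cite for the $G^-$ side is a gradient estimate and does not bear on the $L^1$ integral you need here. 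Finally, your use of the no-spurious-points hypothesis --- the two-sided comparison $\Gamma^\pm\simeq\log\dist(\cdot,I^\pm)$ --- is not the leverage the argument actually requires.

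The paper avoids this obstacle by a detour: it first proves the \emph{self}-integrals $\int(-\chi\circ G^+)\,T^+\wedge\omega$ and $\int(-\chi\circ G^-)\,T^-\wedge\omega$ are finite, and only afterwards gets the cross-integrals via the comparison principle of \cite[Prop.~2.5]{gz2}. For the self-integral one passes to the gradient side, expands by Cauchy--Schwarz, and is led to $\int\chi'\circ\Gamma^+\,d\Gamma^+\wedge d^c\Gamma^+\wedge\lambda_1^{-n}f^n_*\omega$. The key lemma --- and this is where the absence of spurious points is really used --- is that $\lambda_1^{-nk}f^{nk}_*\omega\le c\omega+dd^c w_n$ with $w_n\ge -cG^-$; the point is that the quasi-potentials of $f^k_*\omega$ are singular only on $I^-(f^k)$, and non-spuriousness makes those singularities no worse than $\Gamma^-$, hence than $G^-$. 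Substituting and integrating by parts then produces exactly $\int\chi'\circ\Gamma^+\,d\Gamma^+\wedge d^c\Gamma^+\wedge T^-$, which is finite by the gradient hypothesis. So the hypothesis $G^+\in\nabla^p(T^-,\omega^+)$ feeds back into the bound for $\int(-\chi\circ G^+)\,T^+\wedge\omega$ through this control of $f^n_*\omega$ by $G^-$; your approach never closes this loop.
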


We prove the theorem in a sequence of lemmas, focusing mainly on $G^+$.  
The case of $G^-$ is identical, except that as in the proof of Proposition \ref{pro:homograd}, 
we need the condition $\lambda_1^p > \lambda_2$ to make the triangle inequality work 
in a couple of places below.

\begin{lem}
There exists $k\in\N$ and $c>0$ such that for every $n\in\N$, we have
$$
\frac{f^{nk}_*\omega}{\lambda_1^{nk}} \leq c\omega + dd^c w_n
$$
where $w_n \geq -cG^-$ is $c\omega$-psh 
\end{lem}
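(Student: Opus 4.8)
The plan is to prove, by induction on $n$, the following sharper statement: there are $A\ge 1$, $B\ge 0$, $\epsilon\in(0,1)$ and $k\in\N$ such that, writing $R_n:=\l^{-nk}f^{nk}_*\omega$ and letting $G^-_\ell:=\sum_{j=0}^{\ell-1}\l^{-j}f^j_*\Gamma^-$ denote the partial sums of the series \eqref{T-} defining $G^-$, one has
\[
R_n\le A_n\,\omega^-+\epsilon^n\,\omega+dd^c w_n,\qquad A_n:=A\,\frac{1-\epsilon^n}{1-\epsilon}\le\frac{A}{1-\epsilon},
\]
with $w_n\le 0$ and $w_n\ge \gamma\,G^--B'$ for constants $\gamma,B'$ independent of $n$. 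Granting this, the lemma follows: since $\{T^-\}$ is nef (\cite{ddg1}) one has $\omega^-\le A'\omega+dd^c h$ with $h$ bounded, so $R_n\le(A_nA'+\epsilon^n)\omega+dd^c(A_nh+w_n)$; the numerical constants are bounded uniformly in $n$, and replacing $w_n$ by $A_nh+w_n$ plus a suitable additive constant (which alters neither $dd^cw_n$ nor the kind of lower bound, as $G^-\le0$) produces $R_n\le c\omega+dd^cw_n$ with $w_n$ bounded below by a fixed multiple of $G^-$ and $c\omega$-psh, because $c\omega+dd^cw_n\ge R_n\ge0$.

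The inductive step is an elementary computation relying on three facts: (i) the exact identity $\l^{-mk}f^{mk}_*\omega^-=\omega^-+dd^cG^-_{mk}$, together with $G^-\le G^-_\ell\le0$ (the $G^-_\ell$ decrease to $G^-$ since $\Gamma^-\le0$); (ii) the reindexings $\l^{-k}f^k_*G^-=G^--G^-_k$ and $\l^{-mk}f^{mk}_*G^-_k=G^-_{(m+1)k}-G^-_{mk}$, read off \eqref{T-}; (iii) $|\l^{-k}f^k_*v|\le(\lambda_2/\l)^k\|v\|_\infty\le\|v\|_\infty$ for bounded $v$, and $f^k_*v\le0$ when $v\le0$. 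Applying the linear, order-preserving operator $\l^{-k}f^k_*$ to the bound for $R_n$ and substituting the bound for $R_1=\l^{-k}f^k_*\omega$ gives $R_{n+1}\le A_{n+1}\omega^-+\epsilon^{n+1}\omega+dd^cw_{n+1}$ with $w_{n+1}=A_nG^-_k+\epsilon^nw_1+\l^{-k}f^k_*w_n$; unwinding, the $G^-_k$-contributions telescope via (ii) to a quantity $\ge\frac{A}{1-\epsilon}G^-$, while the $w_1$-contributions form a geometric series bounded below by $\frac1{1-\epsilon}(AG^--B)$, which yields the uniform lower bound on $w_{n+1}$. Keeping $\omega^-$ rather than $\omega$ in leading position is essential: $\l^{-k}f^k_*(c\omega)=cR_1$ would reinject the base case with its constant multiplied, whereas $\l^{-k}f^k_*\omega^-$ is controlled exactly by (i).

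Everything therefore reduces to the base case $n=1$, which is where the hypotheses are used. From $f^*\{T^+\}=\l\{T^+\}$ and the projection formula $f_*f^*=\lambda_2$ on $H^{1,1}$ one gets $f_*\{T^+\}=(\lambda_2/\l)\{T^+\}$, and with the convergence results of \cite{ddg1} this gives $\l^{-m}(f_*)^m\{\omega\}\to c_0\{T^-\}$, $c_0:=\{\omega\}\cdot\{T^+\}>0$; write $\delta_k$ for the error. Fixing $\epsilon\in(0,1)$, then $k$ so large that $\|\delta_k\|$ is small, then $A>c_0$, the class $A\{T^-\}+\epsilon\{\omega\}-(c_0\{T^-\}+\delta_k)$ is K\"ahler, hence contains a K\"ahler form $\kappa$, and $AT^-+\epsilon\omega-R_1=\kappa+dd^cg$ for some function $g$. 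Since $\kappa\ge0$ and $AT^-=A\omega^-+dd^c(AG^-)$, this gives $R_1\le A\omega^-+\epsilon\omega+dd^c(AG^--g)$; set $w_1:=AG^--g$, normalized $\le0$. Comparing potentials locally one finds $g=AG^--u_{R_1}+O(1)$, where $u_{R_1}$ is a local potential of $R_1$, so $w_1$ equals $u_{R_1}$ up to a bounded function, and the only remaining point is the estimate $u_{R_1}\ge AG^--B$.

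This comparison of singularities is the heart of the matter and the main obstacle. Both $R_1=\l^{-k}f^k_*\omega$ and $G^-$ have all their (isolated, algebraic) poles in the finite set $Z:=\bigcup_{j\ge0}f^j(I^-)$. At a point $p=f^m(I^-_0)\in Z$ the potential $u_{R_1}$ has there a singularity of weight $\nu(R_1,p)=\l^{-k}\nu(f^k_*\omega,p)$, which is $O(1)$ because $\l^{-k}\m(f^k_*\omega)$ is bounded; whereas the $j=m$ term of \eqref{T-} forces $G^-$ to be singular at $p$ with weight at least $\l^{-m}\nu(f^m_*\Gamma^-,p)\ge A_0\l^{-m}>0$, the positivity $A_0>0$ coming precisely from the no-spurious-points hypothesis on $I^-$ (which furnishes $\Gamma^-\le A_0\log\dist(\cdot,I^-)-B_0$ near $I^-$, hence $\nu(\Gamma^-,I^-_0)\ge A_0$, transported along the finite orbit). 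As $m\le k$ and $k$ is already fixed, one takes $A$ large enough (depending on $k$) and invokes a \L{}ojasiewicz-type inequality comparing the explicit local singularities of $R_1$ and $G^-$ at each $p\in Z$ to conclude $u_{R_1}\ge AG^--B$. Without the no-spurious hypothesis $G^-$ could fail to be singular at a pole of $R_1$ and the estimate would break down; this is the one genuinely delicate step.
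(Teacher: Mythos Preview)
Your approach is essentially the same as the paper's: establish a base-case inequality for $\lambda_1^{-k}f^k_*\omega$ using the no-spurious hypothesis to compare singularities, then iterate. The paper's execution is cleaner because it uses a \emph{smooth} representative $\theta^-$ in the class $\{T^-\}$ rather than $\omega^-$ (or $T^-$ itself). Writing $\omega = a\theta^- + \eta$ in cohomology with $\lambda_1^{-k}f^k_*\eta\to 0$, the paper gets directly $\lambda_1^{-k}f^k_*\omega \le a\theta^- + \tfrac12\omega + dd^c w$ with $w$ qpsh and smooth off $I^-(f^k)$, then asserts $w\ge bG^-$ from the no-spurious assumption and iterates. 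Your version, using $\omega^-$, forces you to carry the partial sums $G^-_\ell$ through the recursion and telescope them explicitly; this works but adds bookkeeping that the paper avoids.

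A few imprecisions worth fixing. First, the set $Z=\bigcup_{j\ge 0}f^j(I^-)$ is in general infinite; the poles of $R_1$ lie only in $I^-(f^k)=\bigcup_{j=0}^{k-1}f^j(I^-)$, which is the finite set you actually need. Second, $G^-$ does not have ``algebraic'' singularities (it is an infinite series), but this is harmless: for the bound $u_{R_1}\ge AG^- - B$ you only need $G^-$ to be \emph{at least as singular} as $u_{R_1}$ at the finitely many poles of $u_{R_1}$, and elsewhere $u_{R_1}$ is bounded while $G^-\le 0$. Third, the comparison at each $p\in I^-(f^k)$ is not really a ``\L{}ojasiewicz-type'' statement: what is used is that $u_{R_1}\ge C_k\log\dist(\cdot,I^-(f^k))-O(1)$ (the usual lower bound for potentials of pushforwards of smooth forms), while the no-spurious hypothesis gives the reverse upper bound $\Gamma^-\le A_0\log\dist(\cdot,I^-)+O(1)$, hence $G^-\le c\log\dist(\cdot,p)+O(1)$ near each such $p$ via the term $\lambda_1^{-m}f^m_*\Gamma^-$ with $m<k$. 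This yields $u_{R_1}\ge (C_k/c)G^- - B$ locally, which is all you need.
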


We remark before continuing that if the indeterminacy set of $f$ has no spurious points, 
then neither does that of $f^k$.  Therefore, there is no particular harm in assuming 
for the sake of notational simplicity that $k=1$ when we apply the lemma.

\begin{proof}
Let $\theta^-$ be a smooth form cohomologous to $T^-$.  Then by Theorem 1.3 in \cite{ddg1} 
we have $\om = a\theta^- + \eta$ for some $a>0$ and a smooth form $\eta$ such that 
$\lambda_1^{-k}f^k_*\eta$ tends to zero in cohomology.  Thus for $k$
large enough, we have
$$
\frac{f^k_*\om}{\lambda_1^k} \leq a\theta^- + \frac12\omega + dd^c w
$$
where $w$ is a quasipsh function that is smooth away from $I^-(f^k)$.  Since there are no spurious
points in $I^-$, we have in fact that $w \geq bG^-$ for some $b>0$.  
Iterating this inequality then gives
$$
\frac{f^{kn}_*\om}{\lambda_1^{kn}} \leq a b_n \theta^- 
+ \frac{1}{2^n}\omega 
+ dd^c \left( \sum_{j=0}^{n-1} \frac{f^{jk}_* w}{2^{(n-1-j)}\lambda_1^{jk}} \right)
+dd^c \left( \sum_{j=0}^{n-2} ab_{n-2-j} \frac{1}{\l^{jk}} (f^{jk})_* \Gamma^- \right),
$$
where $b_n=\sum_{\ell=0}^{n-1} 2^{-\ell} \leq 2$.
Since  
$\frac{f^j_* \Gamma^-}{\lambda_1^j} \geq \frac{f^j_* G^-}{\lambda_1^j} \geq G^-$, 
the lemma follows immediately. 
\end{proof}

\begin{lem}
Both $\int -\chi\circ G^+\, T^+\wedge\omega$ and $\int-\chi\circ G^-\, T^-\wedge\omega$ are finite.
\end{lem}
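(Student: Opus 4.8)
The plan is to prove both integrals finite by the same argument, so I will concentrate on $\int -\chi\circ G^{+}\,T^{+}\wedge\omega$; the estimate for $\int -\chi\circ G^{-}\,T^{-}\wedge\omega$ is parallel, the one extra ingredient being the hypothesis $\lambda_1^{p}>\lambda_2$, needed to absorb the loss in $i\,f_{*}\eta\wedge\overline{f_{*}\eta}\leq\lambda_2\,f_{*}(i\,\eta\wedge\overline{\eta})$ exactly as in the proof of Proposition \ref{pro:homograd}. First I would reduce from $G^{+}$ to $\Gamma^{+}$: since $0<p\leq 1$ the function $s\mapsto s^{p}$ is subadditive on $[0,\infty)$, so from $G^{+}=\sum_{n\geq 0}\lambda_1^{-n}\,\Gamma^{+}\circ f^{n}$ with all summands $\leq 0$ one gets $-\chi\circ G^{+}=(-G^{+})^{p}\leq\sum_{n\geq 0}\lambda_1^{-np}(-\Gamma^{+}\circ f^{n})^{p}$. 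Thus it suffices to show that $a_{n}:=\int_{X}(-\Gamma^{+}\circ f^{n})^{p}\,T^{+}\wedge\omega$ grows at most polynomially in $n$, since $\lambda_1^{p}>1$ then makes the series converge; in fact I would aim for $a_{n}\leq C(n+1)$.

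To estimate $a_{n}$, I would change variables using $f^{*}T^{+}=\lambda_1 T^{+}$: as $\Gamma^{+}\in PSH(X,\omega^{+})\cap L^{\infty}_{loc}(X\setminus I^{+})$ and $T^{+}$ charges no curve, the standard change-of-variables formula for the invariant current of a $1$-stable map (cf. \cite{ddg1}) gives, for $h:=(-\Gamma^{+})^{p}\geq 0$, that $a_{n}=\int_{X}(h\circ f^{n})\,T^{+}\wedge\omega=\lambda_1^{-n}\int_{X}h\,T^{+}\wedge(f^{n})_{*}\omega$. Now the preceding lemma (after replacing $f$ by a fixed iterate, which is harmless) bounds $\lambda_1^{-n}(f^{n})_{*}\omega\leq c\,\omega+dd^{c}w_{n}$, where $w_{n}$ is $c\omega$-psh, is bounded below by a fixed multiple of $G^{-}$, and — inspecting its explicit definition — is bounded above uniformly in $n$ (its positive part is built from pushed-forward copies of a fixed qpsh function divided by $\lambda_1^{nk}$, which are summable because $\lambda_2<\lambda_1$). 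Since $h\geq 0$ this yields $a_{n}\leq c\int_{X}h\,T^{+}\wedge\omega+\int_{X}h\,T^{+}\wedge dd^{c}w_{n}$, and the first term is a finite constant independent of $n$: Proposition \ref{pro:energycriter} gives $\Gamma^{+}\in L^{1}(T^{+})$, and $h\leq -\Gamma^{+}+1$.

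It then remains to bound the cross term $\int_{X}h\,T^{+}\wedge dd^{c}w_{n}$ by $C(n+1)$, and this is where I expect the real work to lie. Integrating by parts to put $dd^{c}$ back onto $h=-\chi\circ\Gamma^{+}$ (after modifying $\chi$ near $0$ so that $\chi'$ is bounded, which does not affect the energy class), and splitting $dd^{c}h$ into the positive pieces $-\chi'(\Gamma^{+})\,dd^{c}\Gamma^{+}$, $-\chi''(\Gamma^{+})\,d\Gamma^{+}\wedge d^{c}\Gamma^{+}$ and a bounded smooth error coming from $dd^{c}\Gamma^{+}=\lambda_1^{-1}f^{*}\omega^{+}-\omega^{+}$, one is reduced, using $w_{n}\geq(\mathrm{const})\,G^{-}$ and $w_{n}\leq C_{0}$, to controlling finitely many expressions of the shape $\int(-G^{-})\,\chi'(\Gamma^{+})\,d\Gamma^{+}\wedge d^{c}\Gamma^{+}\wedge T^{+}$ and $\int(-G^{-})\,\omega^{+}\wedge T^{+}$, summed over the $O(n)$ terms that make up $w_{n}$ (a further integration by parts against the current $M\omega^{+}+dd^{c}(\chi\circ\Gamma^{+})$, together with $f^{j}_{*}\Gamma^{-}/\lambda_1^{j}\geq G^{-}$, is what cancels the $\lambda_1^{jk}$ prefactors so that each term is $O(1)$). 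Finiteness of these quantities is exactly where the two gradient hypotheses enter, via Proposition \ref{pro:homograd}: $G^{+}\in\nabla^{p}(T^{-},\omega^{+})$ forces $\Gamma^{+}\in\E^{p}(T^{-},\omega^{+})$, and (here the condition $\lambda_1^{p}>\lambda_2$ is used) $G^{-}\in\nabla^{p}(T^{+},\omega^{-})$ forces $\Gamma^{-}\in\E^{p}(T^{+},\omega^{-})$; feeding these finite-energy bounds, plus $\Gamma^{+}\in L^{1}(T^{+})$, through the integrations by parts and Cauchy--Schwarz gives the required finiteness.

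The main obstacle, and the part I expect to demand the most care, is keeping every estimate uniform in $n$: the bound on the cross term must genuinely be $O(n)$ rather than $O(\lambda_1^{np})$, which forces one to track the $\lambda_1$-powers precisely (so that the $\lambda_1^{-n}$ and $\lambda_1^{-jk}$ prefactors exactly cancel the $\lambda_1^{n}$ and $\lambda_1^{j}$ produced by pushing forward) and to juggle three currents simultaneously — $T^{+}$, the $\Gamma^{+}$-current $M\omega^{+}+dd^{c}(\chi\circ\Gamma^{+})$, and the normalized push-forwards of $\Gamma^{-}$ appearing in $w_{n}$. Once the $O(n)$ bound for $a_{n}$ is in hand, summing $\sum_{n}\lambda_1^{-np}a_{n}<\infty$ finishes the proof, and the argument for $\int -\chi\circ G^{-}\,T^{-}\wedge\omega$ is identical after swapping the roles of $T^{+}$ and $T^{-}$ and inserting the factor $\lambda_2/\lambda_1^{p}<1$ wherever a push-forward is distributed over a wedge product.
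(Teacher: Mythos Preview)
Your plan diverges from the paper's in a way that creates a genuine gap. The paper's first move is to integrate by parts \emph{before} expanding $G^{+}$ as a series: up to additive constants,
\[
\int_X -\chi\circ G^{+}\,T^{+}\wedge\omega
=\int_X \chi'\!\circ G^{+}\,dG^{+}\wedge d^{c}G^{+}\wedge\omega,
\]
and only then applies the Cauchy--Schwarz/triangle argument of Proposition~\ref{pro:homograd}. The effect of this ordering is that $T^{+}$ disappears entirely: after the series expansion one is left with
\(
\int \chi'\!\circ\Gamma^{+}\,d\Gamma^{+}\wedge d^{c}\Gamma^{+}\wedge\lambda_1^{-n}(f^{n})_{*}\omega,
\)
and the previous lemma then replaces $\lambda_1^{-n}(f^{n})_{*}\omega$ by $c\omega+dd^{c}w_{n}$ with $w_{n}\geq -cG^{-}$. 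A final integration by parts converts $dd^{c}w_{n}$ into $dd^{c}G^{-}=T^{-}-\omega^{-}$, so the only unbounded current that reappears is $T^{-}$, and the resulting integral $\int\chi'\!\circ\Gamma^{+}\,d\Gamma^{+}\wedge d^{c}\Gamma^{+}\wedge T^{-}$ is finite precisely by the hypothesis $G^{+}\in\nabla^{p}(T^{-},\omega^{+})$. Moreover the bound is uniform in $n$ (not $O(n)$), so the series converges via the factor $\lambda_1^{-np/2}$.

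Your route---subadditivity of $(-t)^{p}$ first, then the change of variables $\int(h\circ f^{n})\,T^{+}\wedge\omega=\lambda_1^{-n}\int h\,T^{+}\wedge(f^{n})_{*}\omega$---never eliminates $T^{+}$. After you apply the previous lemma and integrate by parts, you are left with integrals such as
\[
\int_X(-G^{-})\,\omega^{+}\wedge T^{+}
\quad\text{and}\quad
\int_X(-G^{-})\,\chi''(\Gamma^{+})\,d\Gamma^{+}\wedge d^{c}\Gamma^{+}\wedge T^{+}.
\]
Neither of these is controlled by the hypotheses of Theorem~\ref{thm:nospurs}. The first is essentially the statement $G^{-}\in L^{1}(T^{+})$ (i.e. $T^{-}\in L^{1}(T^{+})$), which is exactly what the theorem is designed to \emph{avoid} assuming; it is even stronger than the conclusion of the \emph{next} lemma, so you cannot appeal to that either. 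The second mixes $T^{+}$ with the $\Gamma^{+}$-gradient in a way that the available energy bounds $\Gamma^{+}\in\E^{p}(T^{-},\omega^{+})$ and $\Gamma^{-}\in\E^{p}(T^{+},\omega^{-})$ do not address, and the ``further integration by parts against $M\omega^{+}+dd^{c}(\chi\circ\Gamma^{+})$'' you suggest only trades $(-G^{-})\cdots T^{+}$ for $(-G^{+})\cdots T^{-}$, producing an integral with an extra unbounded factor $-G^{+}$ that again exceeds what the gradient hypothesis supplies. The fix is to pass to gradient form at the outset, as the paper does, so that $T^{+}$ never enters the estimate.
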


\begin{proof}
Modulo finite additive constants, the first integral is estimated by
\begin{eqnarray*}
\left( \int -\chi\circ G^+\, T^+\wedge\omega \right)^{1/2}
& = &
\left( \int \chi'\circ G^+ dG^+ \wedge d^c G^+ \wedge \omega \right)^{1/2}\\
& \leq & 
\sum_{n=1}^\infty \left( \int \chi'\left(\frac{\Gamma^+\circ f^n}{\lambda_1^n}\right) 
\frac{d(\Gamma^+\circ f^n) \wedge d^c (\Gamma^+\circ f^n)}{\lambda_1^{2n}}\wedge\omega \right)^{1/2}\\
& = &
\sum_{n=1}^\infty \frac{1}{\lambda_1^{np/2}}
\left(\int \chi'\circ\Gamma^+\,d\Gamma^+ \wedge d^c\Gamma^+\wedge\frac{f^n_*\omega}{\lambda_1^n} \right)^{1/2} \\
& \leq &
\sum_{n=1}^\infty \frac{1}{\lambda_1^{np/2}}
\left( \int \chi'\circ\Gamma^+\,d\Gamma^+ \wedge d^c\Gamma^+\wedge (c\omega + dd^c w_n) \right)^{1/2},
\end{eqnarray*}
where $c,w_n$ are as in the previous lemma.  If we take the contributions to the integral from $c\omega$ and from $dd^c w_n$ separately, then the first contribution is finite by Proposition \ref{pro:energycriter}.  The second contribution is
handled up to additive constants as follows.
\begin{eqnarray*}
\int \chi'\circ\Gamma^+\,d\Gamma^+ \wedge d^c\Gamma^+\wedge dd^c w_n & = &
\int -w_n \,dd^c(\chi\circ\Gamma^+)\wedge dd^c\Gamma^+ \\
& = & \int - w_n  \,(dd^c\chi\circ\Gamma^++\om^+)\wedge (dd^c\Gamma^++\om^+) +O(1)\\
& \leq & c\int - G^- \,(dd^c\chi\circ\Gamma^++\om^+)\wedge (dd^c\Gamma^++\om^+) +O(1) \\
& = & c\int -G^-\,dd^c(\chi\circ\Gamma^+)\wedge dd^c\Gamma^+ +O(1)\\
& = & c\int \chi'\circ\Gamma^+\,d\Gamma^+ \wedge d^c\Gamma^+\wedge T^-+O(1)
\end{eqnarray*}
which is finite by the hypotheses of Theorem \ref{thm:nospurs}
\end{proof}

\begin{lem}
Both $\int -\chi\circ G^+\,T^-\wedge \omega$ and $\int -\chi\circ G^-\,T^+\wedge \omega$ are finite.
\end{lem}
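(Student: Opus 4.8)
The plan is to run the estimate $\lambda_1^{-nk}f^{nk}_*\omega\le c\omega+dd^cw_n$ furnished by the first lemma of this section through a limiting argument, much as the preceding lemma did. I treat $\int-\chi\circ G^+\,T^-\wedge\omega$; the bound for $\int-\chi\circ G^-\,T^+\wedge\omega$ follows symmetrically, using the pullback analogue of that estimate and invoking $\lambda_1^p>\lambda_2$ exactly where it was needed before. As a harmless preliminary I replace $\chi$ by a weight in $\W$ that agrees with $\chi$ near $-\infty$ but has derivative $\le1$ everywhere; this alters $-\chi\circ G^+$ by a function of $G^+$ bounded on $(-\infty,0]$, so it changes neither the finiteness statements nor the hypotheses $G^\pm\in\nabla^p$, and it makes $\chi\circ G^+$ an $\omega^+$-psh function, which legitimizes the integrations by parts below. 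I also take $k=1$, as permitted.

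By \eqref{invcurrents}, $\lambda_1^{-n}f^n_*\omega\to T^-$ as currents, hence $\lambda_1^{-n}f^n_*\omega\wedge\omega\to T^-\wedge\omega$ weakly. Since $-\chi\circ G^+$ is lower semicontinuous and nonnegative, it suffices to bound the integrals $\int-\chi\circ G^+\,\lambda_1^{-n}f^n_*\omega\wedge\omega$ uniformly in $n$; their liminf then dominates $\int-\chi\circ G^+\,T^-\wedge\omega$. By the first lemma this integral is at most $c\int-\chi\circ G^+\,\omega^2+\int-\chi\circ G^+\,dd^cw_n\wedge\omega$, and the first summand is finite and $n$-independent because $0\le-\chi(t)\le C(1+|t|)$ and $G^+\in PSH(X,\omega^+)\subset L^1(X)$. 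So the whole matter reduces to a bound for $\int-\chi\circ G^+\,dd^cw_n\wedge\omega=\int w_n\,dd^c(-\chi\circ G^+)\wedge\omega$ that is uniform in $n$.

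This last step is the heart of the argument, and the main obstacle. One controls it using the full output of the first lemma — $w_n$ is $c\omega$-psh, it is bounded above uniformly in $n$ (one reads this off its explicit formula there, the $f^{jk}_*w$-contributions being summable uniformly because $\lambda_2<\lambda_1$ and the $\Gamma^-$-contributions being $\le0$), and it dominates a fixed multiple of $G^-$ from below — together with the facts that $\chi\circ G^+$, being $\omega^+$-psh, forces the positive part of the signed measure $dd^c(-\chi\circ G^+)\wedge\omega$ to be $\le\omega^+\wedge\omega$, and that $T^+$, hence $dd^cG^+$, charges no pluripolar set (Proposition \ref{pro:alternativeT+}), so that its negative part charges none either; this yields $\int w_n\,dd^c(-\chi\circ G^+)\wedge\omega\le C\,\{\omega^+\}\cdot\{\omega\}$ for all $n$. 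The subtlety is that the apparent shortcut — writing $T^-=\omega^-+dd^cG^-$ and integrating by parts in $\int-\chi\circ G^+\,dd^cG^-\wedge\omega$ — produces the term $\int(-G^-)\,\chi'\circ G^+\,T^+\wedge\omega$, which cannot be bounded without already knowing $G^-\in L^1(T^+\wedge\omega)$, i.e. the very conclusion being sought; going through the dynamical approximation of $T^-$ is what breaks this circularity.
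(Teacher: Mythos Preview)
Your argument has a genuine gap at the decisive step, namely the claimed uniform bound
\[
\int w_n\,dd^c(-\chi\circ G^+)\wedge\omega \le C\,\{\omega^+\}\cdot\{\omega\}.
\]
Write $dd^c(-\chi\circ G^+)\wedge\omega = \omega^+\wedge\omega - (\omega^++dd^c\chi\circ G^+)\wedge\omega$, a difference of positive measures. The first piece is fine: $w_n\le M$ gives $\int w_n\,\omega^+\wedge\omega\le M\{\omega^+\}\cdot\{\omega\}$. For the second, the relevant control on $w_n$ from the first lemma is a \emph{lower} bound $w_n\ge cG^-$ (the paper's $-cG^-$ is a sign slip; this is how it is actually used in the preceding lemma), so one is led to
\[
-\int w_n\,(\omega^++dd^c\chi\circ G^+)\wedge\omega
\le c\int (-G^-)\,(\omega^++dd^c\chi\circ G^+)\wedge\omega.
\]
But integrating by parts turns the $dd^c\chi\circ G^+$ term into $\int \chi\circ G^+\,(\omega^--T^-)\wedge\omega$, and the $T^-$ contribution is exactly $c\int(-\chi\circ G^+)\,T^-\wedge\omega$ --- the quantity you set out to bound, with an uncontrolled constant $c$ in front. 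So the recursion does not close; you land in precisely the circularity you warned against in your last paragraph. The pluripolar-set remark does not help here: knowing that a positive measure does not charge $\{w_n=-\infty\}$ only tells you $\int w_n$ is not $-\infty$, it gives no uniform upper bound for $\int(-w_n)$. And invoking Proposition~\ref{pro:alternativeT+} is itself circular, since that proposition assumes finite dynamical energy, which is what Theorem~\ref{thm:nospurs} is in the process of establishing.

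The reason the analogous computation succeeded in the previous lemma is that there one reduces first to $\Gamma^+$ (whose singularities are isolated), and the endpoint of the integration by parts is the \emph{hypothesis} $\int\chi'\circ\Gamma^+\,d\Gamma^+\wedge d^c\Gamma^+\wedge T^-<\infty$, not the conclusion. With $G^+$ in place of $\Gamma^+$ that mechanism is unavailable. The paper sidesteps the whole difficulty: it applies a comparison principle (Proposition~2.5 of \cite{gz2}) with $\Omega=\omega^++\omega^-$ to get, up to additive constants,
\[
\int -\chi\circ G^+\,T^-\wedge\omega
\le 2\int -\chi\circ G^+\,T^+\wedge\omega + 2\int -\chi\circ G^-\,T^-\wedge\omega,
\]
and both right-hand integrals are finite by the lemma you have already proved.
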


\begin{proof}
We treat the first integral only.
Let $\Omega = \omega^+ + \omega^-$.  Then from Proposition 2.5 in \cite{gz2} we have up to additive constants that
\begin{eqnarray*}
\int -\chi\circ G^+\, T^- \wedge \om 
& = & 
\int -\chi\circ G^+\, dd^c (G^- + \Omega) \wedge \om \\
& \leq & 
2 \int -\chi\circ G^+ \,[\Omega+dd^c G^+] \wedge \om +2 \int -\chi \circ G^-\, [\Omega+dd^c G^-] \wedge \om.\\
& = &
2 \int -\chi\circ G^+\, T^+ \wedge \om +2 \int -\chi\circ G^- \, T^- \wedge \om.
\end{eqnarray*}
We have just seen that the last two integrals are finite, so the proof is complete.
\end{proof}

Theorem \ref{thm:nospurs} is now an immediate consequence of Proposition \ref{pro:grad} and Remark \ref{rqe:nrgl1}.
\qed

\section{Examples}\label{sec:examples}

In this section we exhibit families of examples satisfying our energy conditions. 
Recall from Theorem 4.2 in \cite{ddg1} that the assumption $\lambda_1(f)>\lambda_2(f)$ implies that $X$ is either rational or of Kodaira dimension zero.

\subsection{Polynomial mappings of $\cd$} 
Suppose that $f:\C^2 \rightarrow \C^2$ is {\it polynomial} and, as always, has small topological degree. 
Recent  work of Favre and Jonsson \cite{fj} gives a smooth compactification $X=\C^2 \cup D_{\infty}$ of $\C^2$ 
and $k \in \N^*$ such that 
\begin{itemize}
\item[(P1)] the meromorphic extension of $f^k$ to $X$ is 1-stable;
\item[(P2)] $f^k$ contracts the divisor $D_{\infty}$  at infinity to a fixed point in $I^- \setminus I^+(f^k)$;
\item[(P3)] the relative potential $G^+$ for $T^+$ is continuous in $X \setminus I^+(f^k)$.
\end{itemize}
Using this information, we will show\footnote{After this article was accepted for publication, it was pointed out to the authors that the conclusions of Theorem \ref{thm:polynomial} (as well as results about polynomial maps in \cite{ddg1, ddg3}) hold not only for the iterate $f^k$ but in fact for the map $f$ itself.   The interested reader may consult \cite[\S 4.1.1]{ddg1} for more details about this.}:

\begin{thm}\label{thm:polynomial}
Let $f:\C^2 \rightarrow \C^2$ be a polynomial mapping with $\lambda_2(f)<\lambda_1(f)$, and let $X, k$ be as in (P1)-(P3).  Then $f^k:X\to X$ has finite dynamical energy.  More precisely $G^+\in \E^1(T^-,\om^+)$
and $G^-\in\E^1(T^+,\om^-)$.
\end{thm}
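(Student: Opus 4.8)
The plan is to verify, for the $1$-stable iterate $g:=f^k$ and the homogeneous weight $\chi(t)=t$, the hypotheses of Theorem~\ref{theo:criterion} (concretely: Corollary~\ref{cor:e1} together with its $\Gamma^-$-analogue) in the first alternative, namely $T^+\in L^1(T^-)$. This reduces the theorem to three checks: \textbf{(a)} $T^+\in L^1(T^-)$; \textbf{(b)} $G^+$ is finite at each point of $I^-(g)$; \textbf{(c)} $G^-$ is finite at each point of $I^+(g)$. First I would record the relevant consequences of (P1)--(P3). Since $g$ restricts on $\cd$ to the \emph{polynomial} map $f^k$, it has no indeterminacy point in $\cd$, so $I^+(g)\subset D_\infty$; by (P2) the point $p_\infty$ to which $D_\infty$ is contracted is fixed and lies outside $I^+(g)$, hence $I^+(g)\subset D_\infty\setminus\{p_\infty\}$. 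On the other hand, every irreducible component of $E^+(g)$ is either contained in $D_\infty$ --- hence mapped by $g$ to $p_\infty$, by (P2) --- or meets $\cd$ at a point outside the finite set $I^+(g)$, which $g$ maps into $\cd$; so $I^-(g)=g(E^+(g))\subset\{p_\infty\}\cup\cd$. Since $g$ is moreover regular on all of $\cd$ with $g(\cd)\subset\cd$, and $g(p_\infty)=p_\infty$, the forward orbit $\bigcup_{n\ge0}g^n(I^-(g))$ lies in $\{p_\infty\}\cup\cd$, and is in particular disjoint from $I^+(g)$.

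For \textbf{(a)}, (P3) says $G^+$ is continuous, hence locally bounded, on $X\setminus I^+(g)$, with $I^+(g)$ finite; Proposition~\ref{pro:energycriter} (applied with $\varphi=G^+$, $F=I^+(g)$, $T=T^-$) then gives $G^+\in L^1(T^-)$, i.e.\ $T^+\in L^1(T^-)$, and by symmetry of the $L^1$ condition also $T^-\in L^1(T^+)$.

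For \textbf{(b)}, (P3) also yields $\{G^+=-\infty\}\subset I^+(g)$, which is disjoint from $I^-(g)$ by the first paragraph, so $G^+$ is finite on $I^-(g)$. For \textbf{(c)}, I would use the series $G^-=\sum_{n\ge0}\l^{-n}g^n_*\Gamma^-$ (cf.\ \eqref{T-}) together with the lower bound $\Gamma^-\ge A\log\dist(\cdot,I^-(g))-B$: finiteness of $G^-$ at a point $q$ follows once one controls the pushforwards $g^n_*\Gamma^-$ near $q$, which comes down to the orbit-avoidance condition $q\notin\bigcup_{n\ge0}g^n(I^-(g))$ --- precisely the kind of verification alluded to just after Theorem~\ref{theo:criterion}. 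Since that forward orbit lies in $\{p_\infty\}\cup\cd$ while $I^+(g)\subset D_\infty\setminus\{p_\infty\}$, it follows that $G^-$ is finite at every point of $I^+(g)$.

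With (a)--(c) in place, Corollary~\ref{cor:e1} gives $\Gamma^+\in\E^1(T^-,\om^+)$ and hence $G^+\in\nabla^1(T^-,\om^+)$, and since $T^+\in L^1(T^-)$ this upgrades to $G^+\in\E^1(T^-,\om^+)$; the symmetric statement for $\Gamma^-$ and $G^-$ (which also uses $T^-\in L^1(T^+)$) gives $G^-\in\E^1(T^+,\om^-)$. So $g=f^k$ has finite dynamical energy, with weight $\chi(t)=t$, as asserted. I expect the one genuinely delicate point to be the step implicit in \textbf{(c)}: promoting the heuristic that $\{G^-=-\infty\}$ is governed by the forward orbit of $I^-(g)$ into an honest uniform-in-$n$ lower bound for $\l^{-n}g^n_*\Gamma^-$ on a neighbourhood of $I^+(g)$, while keeping track of the exceptional sets of the iterates $g^n$ and of any accumulation of the orbit. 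Everything else is bookkeeping with (P1)--(P3) and the pluripotential results of Sections~\ref{sec:pluripot}--\ref{sec:homogeneous}.
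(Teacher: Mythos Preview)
Your proposal is correct and follows the paper's proof closely: (a) and (b) are exactly as in the paper (continuity of $G^+$ off $I^+$ plus Proposition~\ref{pro:energycriter} and Corollary~\ref{cor:e1}), and the final assembly via Corollary~\ref{cor:e1} together with the symmetry $T^+\in L^1(T^-)\Leftrightarrow T^-\in L^1(T^+)$ is identical.

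The only substantive difference is in step \textbf{(c)}, precisely the ``delicate point'' you flag. The paper avoids the forward-orbit accumulation issue entirely by working with the \emph{backward} orbit of $I^+$ instead: from $f(\C^2)\subset\C^2$, $I^+\subset D_\infty$, and $f(D_\infty\setminus I^+)=q\in I^-\setminus I^+$, one gets $f^{-1}(I^+)\subset I^+$, hence $f^{-n}(I^+)\subset I^+$ for every $n$. Since $I^+$ is finite and disjoint from $I^-$, the constant $M:=\min_{I^+}\Gamma^-$ is finite, and for $p\in I^+$ one has directly
\[
G^-(p)=\sum_{n\ge0}\l^{-n}(f^n)_*\Gamma^-(p)\ \ge\ \sum_{n\ge0}\l^{-n}(f^n)_*M\ >\ -\infty,
\]
the last sum being controlled by $\lambda_2<\l$. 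This gives the uniform-in-$n$ lower bound you were worried about for free, with no need to analyze how the forward orbit of $I^-$ might accumulate near $I^+$. Your route via $\bigcup_n g^n(I^-)\subset\{p_\infty\}\cup\C^2$ would ultimately work too, but the paper's backward-invariance observation is the clean way to close the gap.
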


\begin{proof} 
To simplify notation, we replace $f^k$ by $f$.  We recall that $1$-stability implies that $I^+\cap I^-=\emptyset$.  Since $G^+$ is continuous in $X \setminus I^+$, it is in particular finite at all points in $I^-$, so $G^+ \in \E^1(T^-,\om^+)$ by Proposition \ref{pro:energycriter} and Corollary \ref{cor:e1}.

Though $G^-$ is less well-behaved, we can show it is finite at all points in $I^+$.
Since $f$ is polynomial on $\C^2$, we have $I^+\subset D_\infty$.  
The invariance $f(\C^2) \subset \C^2$ and the contraction property $f(D_{\infty} \setminus I^+)=q=f(q) \in I^-$ imply that $f^{-1}(I^+)\subset I^+$.  Thus
$$
G^-(p) =    \sum_{n\geq 0} \frac{1}{\l^n} (f^n)_* \Gamma^- (p) \geq
        \sum_{n\geq 0} \frac{1}{\l^n} (f^n)_* M > -\infty,
$$
where $M=\min_{p\in I^+} \Gamma^-(p)$ is finite because $I^+\cap I^- = \emptyset$ (see \cite[Lemma 3.2]{ddg1}). 
 From Corollary \ref{cor:e1} again, we see that $G^-\in \nabla^1(T^+,\omega^-)$.  Since $G^+\in L^1(T^-)$, we have $G^-\in L^1(T^+)$ by symmetry.  Hence in fact $G^-\in \E^1(T^+,\omega^-)$.
\end{proof}

It follows from Theorem \ref{thm:mixing} that $\mu_f=T^+ \wedge T^-$ is a well-defined, mixing invariant 
probability measure which does not charge pluripolar sets.  We can actually say more.

\begin{thm}
Let $f$ be as in Theorem \ref{thm:polynomial}, and $V \subset X$ be an algebraic curve.  
Then $\log {\rm dist}(\cdot,V) \in L^1(\mu_f)$.
\end{thm}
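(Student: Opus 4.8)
The plan is to realize $\log\dist(\cdot,V)$ as a quasiplurisubharmonic function with a pole along $V$, integrate it by parts against $\mu_f=T^+\wedge T^-$, and control the resulting terms with the weight $\chi(t)=t$ estimates already obtained in Theorem~\ref{thm:polynomial}. First I would normalize: if $s$ is a holomorphic section of $\HO_X(V)$ with zero divisor $V$ and $h$ a smooth hermitian metric, then $\log\dist(\cdot,V)$ and $u:=\log|s|_h$ differ by a bounded function, so it suffices to show $u\in L^1(\mu_f)$, and we may assume $u\le 0$. One has $dd^c u=[V]-\Theta$ with $\Theta$ smooth and closed; replacing $\Theta$ by $\Theta+c\om$ with $c$ large, $\Omega:=\Theta+c\om$ is K\"ahler, $u\in PSH(X,\Omega)$ and $\Omega+dd^c u=[V]+c\om\ge0$. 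It is enough to treat each irreducible component separately, so assume $V$ irreducible. Writing $\mu_f=\mu(G^-,T^+)=(\om^-+dd^cG^-)\wedge T^+$ and recalling from Theorem~\ref{thm:polynomial} that $G^-\in L^1(T^+)$, that $G^\pm\in\E^1(T^\mp,\om^\pm)$, that $G^+$ is continuous on $X\setminus I^+$ with $I^+$ finite (property (P3)), and that $G^-$ is finite on $I^+$, I would integrate by parts. Since $T^+$ has locally bounded potential off the finite, hence $\mu_f$-null, set $I^+$, the truncation argument of \S\ref{sec:pluripot} yields
\begin{align*}
\int(-u)\,d\mu_f &= \int(-u)\,\om^-\wedge T^+ + \int(-G^-)\,dd^c u\wedge T^+ \\
&= \int(-u)\,\om^-\wedge T^+ - \int(-G^-)\,\Theta\wedge T^+ + \int_V(-G^-)\,T^+|_V,
\end{align*}
where $T^+|_V:=(\om^++dd^cG^+)|_V$ is the trace of $T^+$ on $V$ (well defined because $T^+$ charges no curve and, as we will check below, $G^+|_V\not\equiv-\infty$).

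The first two integrals on the right are finite, since quasiplurisubharmonic functions are integrable against positive measures with bounded local potentials and $\om^-\wedge T^+$, $\Theta\wedge T^+$ are of this type (using $G^-\in L^1(T^+)$ for the $\om^-$-relative potential in the first one). A further integration by parts gives $\int(-u)\,\om^-\wedge T^+=\int(-u)\,\om^-\wedge\om^+ + \int_V(-G^+)\,\om^-|_V - \int(-G^+)\,\om^-\wedge\Theta$, so the whole computation reduces to: (i) integrals of quasiplurisubharmonic functions against measures with bounded local potentials, which are finite; and (ii) the curve-supported integrals $\int_V(-G^+)\,\om^-|_V$, $\int_V(-G^-)\,\om^+|_V$ and — after Cauchy--Schwarz on $V$ applied inside $\int_V(-G^-)\,dd^c(G^+|_V)$ — the Dirichlet energies $\int_V dG^\pm|_V\wedge d^cG^\pm|_V$. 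All the truncation limits involved are controlled uniformly exactly by the plurifine identities of \S\ref{sec:pluripot} together with the $\E^1$-bounds, as in the proofs of \S\ref{sec:homogeneous}.

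The main obstacle is to see that the curve-supported quantities in (ii) are finite, i.e.\ that $G^+$ and $G^-$ restricted to $V$ are not identically $-\infty$ — hence, being $\om^\pm|_V$-plurisubharmonic on the compact curve $V$, lie in $L^1(V)$ — and have finite energy on $V$. For $G^+$ this follows from (P3): $G^+$ is continuous, hence finite, on the dense subset $V\setminus(V\cap I^+)$, so $G^+|_V\not\equiv-\infty$; finiteness of its energy comes from $\Gamma^+\ge A\log\dist(\cdot,I^+)-B$ together with $f^{-1}(I^+)\subset I^+$, which makes all but finitely many terms of the series $G^+=\sum_n\l^{-n}\Gamma^+\circ f^n$ bounded near $I^+$, so that $G^+|_V$ has at worst logarithmic singularities at the finitely many points of $V\cap I^+$. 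For $G^-$ one must rule out $V\subset\{G^-=-\infty\}$: from $G^-=\sum_n\l^{-n}(f^n)_*\Gamma^-$, the fact that $I^+\cap I^-=\O$ makes each $f^n$ regular on the finite set $I^-=\{\Gamma^-=-\infty\}$, so every $(f^n)_*\Gamma^-$ is $-\infty$ only on a finite set and the series is finite off a countable subset of $V$; then the inequality $\lambda_2(f)<\l$, applied to the roughly $\lambda_2(f)^n$ preimages entering $(f^n)_*\Gamma^-$ together with $\Gamma^-\ge A\log\dist(\cdot,I^-)-B$, forces $G^-$ to be finite at those points of $V$ whose backward orbit stays a definite distance from $I^-$ — a dense subset of $V$ by virtue of (P1)-(P3). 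Granting that $G^\pm|_V\in L^1(V)$ with finite energy, all terms above are finite and $\int(-u)\,d\mu_f<\infty$. The single hardest point is thus the control of how fast a backward orbit of a point of $V$ can approach $I^-$; modulo that input, the argument is the integration-by-parts bookkeeping already developed in \S\ref{sec:pluripot}-\S\ref{sec:homogeneous}.
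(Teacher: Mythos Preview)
Your integration-by-parts framework is in the right spirit, but there are two genuine gaps.

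First, the claim that $f^{-1}(I^+)\subset I^+$ ``makes all but finitely many terms of the series $G^+=\sum_n\l^{-n}\Gamma^+\circ f^n$ bounded near $I^+$'' is wrong. That invariance only says that the poles of every $\Gamma^+\circ f^n$ lie in $I^+$; it says nothing about their order. Near a point of $I^+$ \emph{all} terms can be unbounded, and without further input $G^+$ could be much worse than logarithmic there. The paper obtains the logarithmic bound $G^+\ge A'\log\dist(\cdot,I^+)-B'$ from a quantitative estimate of Favre--Jonsson: for some $\sqrt{\lambda_2}<\gamma<\l$,
\[
\dist(f^n x,I^+)\ \ge\ (C_1\dist(x,I^+))^{\gamma^n}\quad\text{for all }x,n,
\]
so that $\Gamma^+\circ f^n\ge A\gamma^n\log\dist(\cdot,I^+)-B_n$ and the sum over $n$ converges because $\gamma/\l<1$. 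This is precisely the ``hardest point'' you flag at the end, but it concerns the \emph{forward} orbit near $I^+$, not the backward orbit near $I^-$, and it is an external input you cannot manufacture from (P1)--(P3) alone.

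Second, your reduction forces you to control the Dirichlet energies $\int_V dG^-\wedge d^cG^-$, which you do not establish and which seems hard. The paper avoids this entirely: once $G^+\ge A'\log\dist(\cdot,I^+)-B'$ is known, one bounds $\int(-\f_V)\,\mu_f$ (up to finite terms) by $\int_V(-G^+)\,T^-\le \int_V(-\log\dist(\cdot,I^+))\,T^-$, and a second integration by parts against $\log\dist(\cdot,I^+)$ leaves only $\int_V(-G^-)\,\om$ plus the \emph{point values} $-G^-(x)$ for $x\in I^+\cap V$, which are finite by Theorem~\ref{thm:polynomial}. Thus all one needs about $G^-|_V$ is that it is not identically $-\infty$ (so $G^-\in L^1([V])$), not any energy bound. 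The paper proves $G^-|_V\not\equiv-\infty$ by a separate lemma: using the contraction of $D_\infty$ to the superattracting fixed point $q$ and local intersection theory at the rigid germ $(f,q)$, one shows that for some $n$ every irreducible component of $f^{-n}(V)$ not contained in $D_\infty$ misses $q$, and then the relation $f_*G^-=\l(G^--\Gamma^-)$ transfers finiteness. Your sketch for this step (``dense subset of $V$ by virtue of (P1)--(P3)'') does not supply the needed argument.
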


In particular $\log {\rm dist}(\cdot,{\mathcal C}_f) \in L^1(\mu_f)$, where
${\mathcal C}_f$ denotes the critical set of $f$.
This result will allow us in \cite{ddg3} to use Pesin's theory of non-uniformly 
hyperbolic dynamical systems and show the
existence of many saddle periodic points.

\begin{proof}
Assume without loss of generality that $V$ is irreducible.  We claim that $G^-_{|V} \not\equiv -\infty$ on $V$.  Granting this for the moment, let $0 \geq \f_V \in L^1(X)$ be a global potential for the current of integration along $V$.  Thus $dd^c \f_V=[V]-\Theta$, where $\Theta$ is smooth, and $\f_V \in L^1(\mu_f)$ if and only if $\log {\rm dist}(\cdot,V) \in L^1(\mu_f)$.  From our claim we have that $G^-\in L^1([V])$ and, by symmetry $\varphi\in L^1(T^-)$.  Hence the measure $T^- \wedge [V]$ is well-defined. 

Fix a constant $\gamma$ between $\sqrt{\lambda_2}$ and $\lambda_1$.  Then from \cite{fj}, we have $C_1>0$ such that 
$$
{\rm dist}(f^n x,I^+) \geq (C_1 {\rm dist}(x,I^+))^{\g^n},\quad\text{for all } x\in X\text{ and } n\in\N
$$
Since $\Gamma^+(x) \geq A \log {\rm dist}(x,I^+)-B$, we infer from \eqref{T+} that
$G^+(x) \geq A' \log {\rm dist}(x,I^+)-B'.$
Therefore repeated integration by parts gives (up to finite additive/multiplicative constants) 
\begin{eqnarray*}
\int-\varphi\, \mu_f & = & \int -G^+ \,T^- \wedge [V] \leq \int -\log \mathrm{dist}(\cdot,I^+)\, T^- \wedge [V] \\
& = &
\int_V -G^- \,dd^c\log \mathrm{dist}(\cdot,I^+)\wedge [V]
\leq \int_V -G^-\, \omega\wedge[V] + \sum_{x\in I^+\cap V} G^-(x) < \infty,
\end{eqnarray*}
since $G^-$ is finite on $I^+$.

It remains to verify that $G^-$ is not identically $-\infty$ on $V$.  Since $V$ must meet $D_\infty$ somewhere, it suffices to show that $G^-$ is finite on $V\cap D_\infty$.  This is the case if, for instance, $V\cap D_\infty\subset I^+$.  However, from $f_*G^-=\l(G^- - \Gamma^-)$, we see that we need only show that $G^-$ is finite on $f^{-n}(V)\cap D_\infty$ for some $n\in\N$.  If, for instance, $V\cap D_\infty$ does not contain the superattracting
point $q\in I^-$, then this is true for $n=1$ because $f^{-1}(D_\infty\setminus\{q\})\subset I^+$.  Finally, as the next lemma makes clear, even if $D_\infty\cap V$ does contain $q$, the same reasoning works for some larger value of $n$.
\end{proof}

\begin{lem}
There exists  an integer $n \in \N$ such that if $W$ is  any irreducible component 
 of $f^{-n}(V)$, not contained in $D_{\infty}$, then $q\notin W$.
\end{lem}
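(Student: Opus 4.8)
Here is my plan for proving the lemma.

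\medskip

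Since $q$ is a fixed point with $q\notin I^+$, $1$-stability gives $q\notin I^+(f^n)=\bigcup_{0\le j<n}f^{-j}(I^+)$ for every $n$, so each $f^n$ is holomorphic near $q$ and fixes $q$. First I would reformulate the statement in terms of curve germs at $q$. If $W$ is a component of $f^{-n}(V)$ with $W\ne D_\infty$ and $q\in W$, then $W\cap D_\infty$ is finite, so the germ of $W$ at $q$ contains an irreducible branch $\gamma$ with $\gamma\not\subseteq D_\infty$; and $\gamma\subseteq f^{-n}(V)$ forces $f^n(\gamma)\subseteq V$. Conversely, if no irreducible branch $\gamma\not\subseteq D_\infty$ at $q$ satisfies $f^n(\gamma)\subseteq V$, no such component $W$ exists. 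Thus it suffices to bound, uniformly, the length of the ``forward orbits at $q$'' $\gamma, f(\gamma), f^2(\gamma),\dots$ that can terminate on a branch of $V$ through $q$ — equivalently, to show these backward orbits have bounded depth.

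The essential input is the local description of $f$ near the superattracting point $q$ furnished by \cite{fj}: in suitable local analytic coordinates $f$ is a superattracting rigid germ whose critical behaviour is carried by $D_\infty$. For instance, when $D_\infty$ is smooth at $q$, one can take coordinates with $D_\infty=\{z=0\}$ in which $f(z,w)=(z^{a}g(z,w),\,z\,\varphi(z,w))$, with $g,\varphi$ units at $q$ and $a$ the order to which $f$ contracts $D_\infty$. From this normal form I would extract two facts. (i) $f^{-1}(q)\subseteq D_\infty$ near $q$ (here $f^{-1}(q)=\{z^{a}g=0\}\cap\{z\varphi=0\}=\{z=0\}$); hence no branch outside $D_\infty$ at $q$ is contracted by $f$, and $f^{-1}(D_\infty)=D_\infty$ near $q$, so for every branch $\gamma\not\subseteq D_\infty$ at $q$ the image $f(\gamma)$ is again an irreducible branch at $q$ not contained in $D_\infty$. (ii) $f$ strictly increases the $D_\infty$-tangency $\nu(\gamma):=(\gamma\cdot D_\infty)_q\in\Z_{\ge 1}$: parametrizing $\gamma$ primitively and chasing orders through $f$ gives $\nu(f(\gamma))=\tfrac{a}{r}\,\nu(\gamma)$, where $r$ is the local degree of $f|_\gamma$; since $r\le\lambda_2$ and the contraction order $a$ satisfies $a>\lambda_2$ — this is exactly where $\lambda_2<\lambda_1$ enters, through the structure of $f$ at $q$ — one gets $\nu(f(\gamma))\ge\nu(\gamma)+1$.

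Granting (i) and (ii), the lemma is immediate. Assume $q\in V$ (otherwise there is no branch of $V$ at $q$ and $n=1$ works), and set $c_0:=\max\{\nu(\beta):\beta\text{ a branch of }V\text{ through }q\}$, finite because $V$ is irreducible and $V\ne D_\infty$. Suppose $W$ is a component of $f^{-n}(V)$ with $W\ne D_\infty$ and $q\in W$, and pick a branch $\gamma\not\subseteq D_\infty$ of $W$ at $q$. By (i) the germs $\gamma^{(i)}:=f^{i}(\gamma)$, $0\le i\le n$, are branches at $q$ outside $D_\infty$, and $\gamma^{(n)}$ is a branch of $V$ through $q$, so $\nu(\gamma^{(n)})\le c_0$; by (ii), $\nu(\gamma^{(0)})<\nu(\gamma^{(1)})<\dots<\nu(\gamma^{(n)})$, a strictly increasing sequence in $\Z_{\ge 1}$, whence $n\le\nu(\gamma^{(n)})-\nu(\gamma^{(0)})\le c_0-1$. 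So any $n\ge c_0$ has the required property. The genuine difficulty lies entirely in (ii): establishing the monotonicity of $D_\infty$-tangency under $f$ relies essentially on Favre and Jonsson's rigidification of $f$ at $q$, and one must run the same computation uniformly in the case where $q$ is a corner of $D_\infty$, replacing the exponent $a$ by the integer $2\times 2$ matrix governing the monomial part of $f$ and invoking $\lambda_2<\lambda_1$ to control its action on the relevant tangency vectors.
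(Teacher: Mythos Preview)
Your approach is correct and essentially the same as the paper's. Both arguments pivot on the local intersection multiplicity with $D_\infty$ at $q$ and rely on the same two facts from Favre--Jonsson: that $f^*D_{loc}\ge\lambda_1 D_{loc}$ for a suitable local divisor $D_{loc}\subset D_\infty$, and that the local topological degree of $f$ near $q$ is at most $\lambda_2$.

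The difference is purely one of packaging. The paper runs the argument by contradiction in one line of intersection theory: assuming a component $W_n\not\subset D_\infty$ through $q$ exists for every $n$, one has
\[
\lambda_1^n \;\le\; \langle W_n,\,f^{n*}D_{loc}\rangle_q \;=\; \langle f^n_*W_n,\,D_{loc}\rangle_q \;\le\; \lambda_2^n\,\langle V,D_{loc}\rangle_q,
\]
which is absurd for $n$ large. Your version unpacks this same computation in normal-form coordinates, tracking the tangency $\nu(\gamma)=(\gamma\cdot D_\infty)_q$ branch by branch and observing that it strictly increases under $f$; your formula $\nu(f(\gamma))=\tfrac{a}{r}\nu(\gamma)$ with $a\ge\lambda_1$ and $r\le\lambda_2$ is exactly the branchwise form of the projection-formula step above. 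The paper's formulation has the advantage of avoiding the case split between smooth and corner points of $D_\infty$ that you flag at the end, since the inequality $f^*D_{loc}\ge\lambda_1 D_{loc}$ is stated divisorially rather than in coordinates.
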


\begin{proof}
Suppose on the contrary that for each $n\in\N$, there exists an irreducible
component $W_n$ of $f^{n*} V$ such that
$q\in W_n$ but $W_n\not\subset D$.  Then since $q$ is superattracting for $f$, this
remains true at the local level. 
That is, there is a neighborhood $U\ni q$ and for every $n\in\N$ a local irreducible
component $W_n$ of the (local) pullback $(f|_U)^{n*}(V)$ such that $q\in W_n$ but
$W_n\not\subset D$.  

Moreover, by \cite{fj} we may choose a coordinate patch $U$ about $q=\mathbf{0}$ so that
$f:(U,\mathbf{0})\to (U,\mathbf{0})$ is a rigid holomorphic germ and there exists  a non-trivial
local divisor $D_{loc}\subset D$ satisfying $(f\rest{U})^* D_{loc} \geq \l
D_{loc}$.  The local topological degree of $f_{|U}$ is no larger than $\lambda_2$.  Hence
$$
\l^n \leq \pair{W_n}{f^{n*} D_{loc}}_q = \pair{f^n_* W_n}{D_{loc}}_q \leq
\lambda_2^n\pair{V}{D_{loc}}_q,
$$
where $\pair{\cdot}{\cdot}_q$ denotes local intersection of germs at $q$.
This contradicts $\lambda_2<\l$ for large $n$.
\end{proof}

\subsection{The secant method}
If $P:\C\to \C$ is a polynomial of degree at least two such that all roots of $P$ are simple, then we recall the secant method from \cite{ddg1}: given points $x,y\in\C$, one declares $f(x,y) = (y,z)$ where $z$ is chosen so that
$(0,z)$ lies on the line from $(x,P(x))$ to $(y,P(y))$.  This prescription defines a $1$-stable meromorphic map $f:\P^1\times\P^1\to\P^1\times\P^1$ with small topological degree.  

\begin{pro}
The secant map $f$ has finite dynamical energy.  More precisely, $G^+\in \E^1(T^-,\om^+)$ and $G^-\in\E^1(T^+,\om^-)$.
\end{pro}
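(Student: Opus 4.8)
The plan is to run the argument of Theorem~\ref{thm:polynomial} for polynomial maps essentially verbatim, the secant map having the same qualitative structure. Everything reduces to two ``orbit avoidance'' statements: that $G^+$ is finite at every point of $I^-$, and that $G^-$ is finite at every point of $I^+$. Granting these, Corollary~\ref{cor:e1} and its analogue for $\Gamma^-$ give $\Gamma^+\in\E^1(T^-,\om^+)$, $G^+\in\nabla^1(T^-,\om^+)$ and $\Gamma^-\in\E^1(T^+,\om^-)$, $G^-\in\nabla^1(T^+,\om^-)$ (the $\Gamma^-$ step using $\l>\lambda_2$), after which one upgrades $\nabla^1$ to $\E^1$ as discussed below. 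So the first task is to recall from \cite{ddg1} the geometry of the secant map on $\pu\times\pu$: the diagonal $\Delta=\{x=y\}$ is $f$-invariant and $f|_\Delta$ is the Newton map of $P$, whose super-attracting fixed points are the roots $r_i$ of $P$; the curves contracted by $f$ are exactly the horizontal lines $\{y=r_i\}$, each crushed to $(r_i,r_i)$, so that $I^-$ consists of these diagonal points (together with whatever sits at infinity, whose orbit $\{y=\infty\}\to\{x=\infty\}\to\Delta$ is explicit); and $I^+$ is finite and, by $1$-stability, disjoint from $I^-$.

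Finiteness of $G^+$ on $I^-$ is the easy half: each point of $I^-$ is a super-attracting fixed point $(r_i,r_i)\notin I^+$ (or falls into $\Delta$ after finitely many steps and is thereafter fixed), so its forward orbit stays a definite distance away from $I^+$; since $\Gamma^+$ is locally bounded off $I^+$ and $\Gamma^+\geq A\log\dist(\cdot,I^+)-B$, the series $G^+=\sum_n\l^{-n}\Gamma^+\circ f^n$ converges along such orbits. Finiteness of $G^-$ on $I^+$ is the more delicate half and is handled, as for polynomial maps, by showing $f^{-1}(I^+)\subseteq I^+$: one checks from the explicit description in \cite{ddg1} that the preimages of an indeterminacy point are again indeterminacy points (the preimage of a ``pair of roots'' point $(r_i,r_j)$, $i\neq j$, lies among the points $(r_k,r_i)\in I^+$, never on $\Delta$, because $f$ is identically $(r_i,r_i)$ on $\{y=r_i\}$ off the roots while the tangent-line rescue keeps the diagonal fixed points on $\Delta$). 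Then for $p\in I^+$ one gets $G^-(p)=\sum_n\l^{-n}(f^n)_*\Gamma^-(p)\geq\sum_n\l^{-n}(f^n)_*M=M\sum_n(\lambda_2/\l)^n>-\infty$, where $M=\min_{I^+}\Gamma^-$ is finite because $\Gamma^-$ is finite away from $I^-$ and $I^+\cap I^-=\emptyset$, and the series converges since $\lambda_2<\l$.

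To pass from $\nabla^1$ to $\E^1$ it suffices, exactly as in the polynomial case, to know $T^+\in L^1(T^-)$ (equivalently $T^-\in L^1(T^+)$): the last clauses of Corollary~\ref{cor:e1} and its analogue then yield $G^\pm\in\E^1(T^\mp,\om^\pm)$. This $L^1$ property follows from Proposition~\ref{pro:energycriter} once one knows that $G^+$ (or $G^-$) is locally bounded away from the finite set $I^+$ (resp.\ $I^-$), which is the expected regularity provided by the explicit analysis in \cite{ddg1}. Should this regularity not be available, one can argue instead that the invariant classes $\{T^\pm\}$, being the Perron eigenvectors of $f^*$ and $f_*$ on $H^{1,1}(\pu\times\pu)$ — which have strictly positive entries in the basis of ruling classes — are K\"ahler, so that neither $I^+$ nor $I^-$ contains a spurious point; Theorem~\ref{thm:nospurs} with $p=1$ (legitimate since $\l>\lambda_2$) then gives $G^\pm\in\E^1(T^\mp,\omega)$, and a routine monotonicity comparison (as in the proof of Proposition~\ref{pro:energycriter}) transfers this to $\E^1(T^\mp,\om^\pm)$.

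The main obstacle is the combinatorial/geometric input behind the two finiteness statements — in particular verifying $f^{-1}(I^+)\subseteq I^+$ and understanding $f$ at infinity, which requires more care than in the polynomial setting because the secant map does not simply collapse the line at infinity; everything else is a transcription of Theorem~\ref{thm:polynomial}. A secondary concern is pinning down the regularity of $G^\pm$ (or, failing that, the no-spurious-points statement) needed for the final $\nabla^1\Rightarrow\E^1$ step.
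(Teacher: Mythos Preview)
Your overall plan is sound and matches the paper's: show $G^\pm$ finite on $I^\mp$, apply Corollary~\ref{cor:e1} (and its $\Gamma^-$ analogue) to get $G^\pm\in\nabla^1(T^\mp,\om^\pm)$, then upgrade to $\E^1$. Your second option for the upgrade---observing that $\{T^\pm\}$ are K\"ahler on $\P^1\times\P^1$, hence no spurious points, and invoking Theorem~\ref{thm:nospurs}---is exactly what the paper does; the $L^1$ route via local boundedness of $G^+$ off $I^+$ is not needed.

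The one place where you take a genuinely harder road is the finiteness of $G^-$ on $I^+$. You try to transplant the polynomial-map mechanism and prove the structural inclusion $f^{-1}(I^+)\subset I^+$, flagging this (correctly) as the main obstacle. The paper sidesteps this entirely. It uses instead a purely dynamical fact: $I^-=\{(z,z):P(z)=0\}$ consists of \emph{attracting} fixed points, and $I^-(f^n)=I^-$ for every $n$. Since each $p\in I^-$ has a forward-invariant neighborhood $U_p$ disjoint from $I^+$ (by $1$-stability $I^+\cap I^-=\emptyset$), any preimage $f^{-n}(q)$ of $q\in I^+$ that entered $U_p$ would force $q\in U_p$, a contradiction. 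Hence $\dist(f^{-n}(I^+),I^-)\geq c>0$ uniformly in $n$, so $\Gamma^-$ is bounded below along all preimages of $I^+$ and $G^-(p)\geq\sum_n\l^{-n}(f^n)_*M>-\infty$ as you wrote. The same attracting-basin observation gives $\dist(f^n(I^-),I^+)\geq c$ (trivially, since $I^-$ is fixed), handling $G^+$ on $I^-$. So you can drop the $f^{-1}(I^+)\subset I^+$ verification altogether; the attracting nature of $I^-$ does the work symmetrically for both directions and also removes any need to analyze the map at infinity.
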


\begin{proof}
We have that $I^- = I^-(f^n) = \{(z,z):P(z)=0\}$ consists of fixed points for every $n\in\N$ and that each of these is attracting for $f$.  It follows that $\dist(f^n(I^-),I^+), \dist(f^{-n}(I^+),I^-) \geq c$ for some $c>0$ and every $n\in\N$.  As with the case of $G^-$ for polynomial maps of $\C^2$ then, we infer that $G^+$ is finite on $I^-$ and vice versa.  Thus by Corollary \ref{cor:e1}, $G^+ \in \nabla^1(T^-,\omega^+)$ and $G^-\in\nabla^1(T^+,\omega^-)$.  

The classes of $T^+$ and $T^-$ are both K\"ahler, moreover, so from Theorem \ref{thm:nospurs} we see that 
$G^+\in\E^1(T^-,\omega^+)$ and $G^-\in\E^1(T^+,\omega^-)$.
\end{proof}

\subsection{Kodaira dimension zero}

When $\mathrm{kod}(X)=0$, we may assume after birational conjugation that $X$ is minimal
and $f$ is 1-stable (see \cite[Proposition 4.3]{ddg1}).

\begin{pro}
Assume $X$ is a minimal surface with $kod(X)=0$. 
Then $f$ satisfies condition $(E_{\chi})$ with $\chi(t)=t$.
Moreover $\log {\rm dist}(\cdot,I^+) \in L^1(\mu_f)$.
\end{pro}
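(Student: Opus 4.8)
The plan is to derive everything from the single fact that the relative potentials $G^+$ and $G^-$ are \emph{bounded} on a minimal surface of Kodaira dimension zero. Indeed, once we know $G^\pm\in L^\infty(X)$, the first assertion is immediate: a bounded $\om^+$-psh (resp.\ $\om^-$-psh) function automatically lies in $\E^1(T^-,\om^+)$ (resp.\ $\E^1(T^+,\om^-)$), by the elementary observation in \S\ref{sec:pluripot} that bounded quasi-psh functions have finite $\E^1$-energy; so $f$ satisfies $(E_\chi)$ with $\chi(t)=t$, and in particular $T^+\in L^1(T^-)$. The entire problem is thus reduced to bounding $\|G^\pm\|_\infty$.

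To do this I would first recall from \cite{ddg1} the structure of small-topological-degree self-maps of a minimal surface with $\kod(X)=0$: when $\lambda_2(f)=1$ the map $f$ is bimeromorphic, and a bimeromorphic self-map of a minimal non-rational surface is biholomorphic; when $\lambda_2(f)\geq 2$ the surface $X$ must be a complex torus or a bielliptic surface and $f$ is (covered by) an affine, hence unramified, endomorphism. In either case $f$ has empty indeterminacy set, so the functions $\Gamma^\pm$, defined via $dd^c\Gamma^+=\l^{-1}f^*\om^+-\om^+$ and $dd^c\Gamma^-=\l^{-1}f_*\om^--\om^-$ with $\om^\pm$ smooth, are themselves smooth and in particular bounded. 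Then $G^+=\sum_{n\geq0}\l^{-n}\,\Gamma^+\!\circ f^n$ converges uniformly on $X$, because $\|\Gamma^+\!\circ f^n\|_\infty=\|\Gamma^+\|_\infty$ while $\sum_n\l^{-n}<\infty$; and $G^-=\sum_{n\geq0}\l^{-n}\,f^n_*\Gamma^-$ converges uniformly because $\|f^n_*\Gamma^-\|_\infty\leq\lambda_2^n\|\Gamma^-\|_\infty$ and $\sum_n(\lambda_2/\l)^n<\infty$. Note that this last series converges precisely because $f$ has small topological degree; this is the only place that hypothesis enters. Hence $G^\pm\in L^\infty(X)$, and the first assertion follows.

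For the statement $\log\dist(\cdot,I^+)\in L^1(\mu_f)$ I would show that the function on the left is in fact bounded on $X$. If $I^+=\O$ there is nothing to prove. Otherwise, using that the invariant classes on a minimal $\kod(X)=0$ surface carry no spurious indeterminacy (again by \cite{ddg1}), one has near each point of $I^+$ a two-sided estimate $A\log\dist(\cdot,I^+)-B\leq\Gamma^+\leq A'\log\dist(\cdot,I^+)-B'$. Combining the right-hand inequality with $\Gamma^+\geq G^+$ (valid since, for $\Gamma^+\leq 0$, the tail $\sum_{n\geq1}\l^{-n}\Gamma^+\!\circ f^n$ is $\leq 0$) gives $\log\dist(\cdot,I^+)\geq(G^++B')/A'\geq(\inf_X G^++B')/A'$ near $I^+$; away from $I^+$ the function $\log\dist(\cdot,I^+)$ is continuous on the compact manifold $X$, hence bounded. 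Therefore $\log\dist(\cdot,I^+)\in L^\infty(X)\subset L^1(\mu_f)$, since $\mu_f$ is a probability measure.

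The main obstacle is entirely contained in the structural input quoted from \cite{ddg1}: that a small-topological-degree self-map of a minimal Kodaira-dimension-zero surface is holomorphic (and therefore trivially free of spurious indeterminacy). Granting that classification, the rest is the elementary uniform-convergence estimate of the second paragraph together with the soft integrability argument of the third; in particular, in this case one needs neither Theorem~\ref{theo:criterion} nor any of the more delicate homogeneous-weight analysis of \S\ref{sec:homogeneous}.
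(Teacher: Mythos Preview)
Your argument rests on a stronger structural input than the paper's: you assert that $f$ is actually holomorphic (so $I^+=\emptyset$ and both $G^\pm$ are bounded). The paper, by contrast, extracts from \cite{ddg1} only that $f$ is \emph{non-ramified}---hence $I^-=\emptyset$---and that $G^-$ is continuous (\cite[Proposition~4.10]{ddg1}); it does not claim $I^+=\emptyset$. From the continuity of $G^-$ alone the paper obtains: (i)~$G^-$ is finite at every point of $I^+$, and (ii)~$T^+\in L^1(T^-)$ (since $G^-$ is bounded). These are precisely the hypotheses of Corollary~\ref{cor:e1}, which then yields $G^+\in\E^1(T^-,\om^+)$ even when $G^+$ itself is unbounded; the companion statement $G^-\in\E^1(T^+,\om^-)$ is immediate from boundedness of $G^-$. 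The assertion $\log\dist(\cdot,I^+)\in L^1(\mu_f)$ follows from Proposition~\ref{pro:energycriter}, using that $T^-$ has bounded local potentials.

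Your claim that $\lambda_2\geq 2$ forces $X$ to be a torus or bielliptic surface (so that $f$ is automatically holomorphic) may well be true, but it goes beyond what the present paper attributes to \cite{ddg1}; the authors of \cite{ddg1}, who are the same as here, chose to argue via Corollary~\ref{cor:e1} rather than assert $I^+=\emptyset$. More concretely, your fallback argument for $\log\dist(\cdot,I^+)\in L^1(\mu_f)$ in the case $I^+\neq\emptyset$ is circular: you bound $\log\dist(\cdot,I^+)$ from below by $(\inf_X G^++B')/A'$, but if $I^+\neq\emptyset$ and contains no spurious points then $\Gamma^+$ has a genuine logarithmic pole along $I^+$, hence $G^+\leq\Gamma^+$ is unbounded below and $\inf_X G^+=-\infty$, making the inequality vacuous. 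In that situation one cannot avoid the pluripotential argument of Proposition~\ref{pro:energycriter}.
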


\begin{proof}
We know from \cite{ddg1} that $f$ is non-ramified.  Therefore $I^-$ is empty and (see \cite[Proposition 4.10]{ddg1}) $G^-$ is continuous on $X$.  That is, the hypotheses of Corollary \ref{cor:e1} are satisfied by both $G^+$ and $G^-$. From Proposition \ref{pro:energycriter}, we further have $\log {\rm dist}(\cdot,I^+) \in L^1(\mu_f)$.
\end{proof}

\subsection{Irrational rotations}\label{subs:irrational}

The following examples originate in the work of Favre \cite{favre-per} (see also \cite{b}). 
Their common feature is the existence of a complex line where $f$ is conjugate to a rotation. 
Choosing the rotation angle properly allows us to produce functions $G^\pm$ that are 
very singular and therefore useful for testing the sharpness of the energy conditions. 

\begin{exa}
Given $a \in \C$, we consider the birational  transformation
$$
f:[x:y:t] \in \P^2 \mapsto [y^2:ay^2+t^2-xy:yt] \in \P^2,
$$
For (the all but countably many) parameters $a\in\C$ such that $f$ is $1$-stable on $\P^2$, 
we have $\lambda_1(f) = 2$ is the degree of the homogeneous polynomials defining $f$.  
Thus $f$ has small topological degree.  Moreover, since
$\dim H^{1,1}(\P^2) = 1$, it follows that we may take $\omega = \omega^+ = \omega^-$.  
So there are no spurious points in either $I^+$ or $I^-$.

One checks that $I^+ = \{[1:0:0]\}$, $I^- = \{[0:1:0]\}$, and that the line $L:=(t=0)$ joining 
these two points is $f$-invariant.  When $a\in\C\setminus [-2,2]$, we have 
$\overline{I_{\infty}^+} \cap \overline{I_{\infty}^-}=\emptyset$, where
$$
I_{\infty}^+=\bigcup_{n \geq 0} I^+(f^n)
\text{ and }
I_{\infty}^-=\bigcup_{n \geq 0} I^-(f^n)
$$
This implies that $f$ is $1$-stable and that $G^{\pm}$ is finite at points in $I^{\mp}$.  
>From Proposition \ref{pro:homograd}, Corollary \ref{cor:e1} and Theorem \ref{thm:nospurs} 
it follows that $f$ satisfies condition $(E_{\chi})$ for $\chi(t)=t$.

Now suppose that $a=2 \cos \pi\theta \in [-2,2]$.  In this case $f_L \eqdef f_{|L}$ is conjugate to a rotation of angle 
$2\pi\theta$, and $f$ is $1$-stable if and only if $\theta$ is irrational.  
Determining whether or not $f$ satisfies $(E_\chi)$ for any given $\chi$ is tricky.  
We will show for any given $p\in (0,1)$ that $G^\pm \in \E^p(T^\mp, \om^\pm)$ if $\theta$ is 
not too well approximated by rational numbers. In \cite{favre-per} it was precisely proved 
that $G^\pm \notin \E^1(T^\mp, \om^\pm)$ for certain values of $\theta$.

Observe that $f$ is conjugate to $f^{-1}$ by the involution $(x,y) \mapsto (y,x)$.  Hence by Corollary \ref{cor:ep} (also Proposition \ref{pro:homograd} and Theorem \ref{thm:nospurs}), we need only verify that
$\limsup_{t\to -\infty} |m^-(t)||t|^{q-1}\,dt <+\infty$, where $p$ is some number larger than $q$ and $m^-(t)$ is the mean value of $G^-$ on the sphere $\partial B_{[1:0:0]}(e^t)$.  By plurisubharmonicity, $m^-(t)$ is comparable to $\sup_{B(I^+,e^t)} G^-$, which is in turn bounded below by $m^-_L(t) \eqdef \sup_{B(I^+,e^t) \cap L} G^-$.  We therefore estimate the latter, fixing coordinates on $L\cong\P^1$ so that $f_L(x) = e^{2\pi i\theta} x$ for all $x\in\C\subset L$ and that $I^+$, $I^-$ become the points $1$ and $-1$, respectively.  Hence
$$
G^-(x) \simeq \sum_{n \geq 0} 2^{-n} \log |e^{2\pi i n\theta} x + 1|.
$$
Note for any $n\in\N$ and $t>0$ that $\sup_{|x-1| = e^{-t}} \log|e^{2\pi i n\theta} x + 1|$ is essentially achieved at $x = 1 + e^{-t}$.  Hence
$$
m_L^-(t) \approx \sum_{n\geq 0} 2^{-n} \max\{\log\epsilon(n),t\}
$$
where $\epsilon(n) \eqdef \min_{m\in\Z} |2n\theta - (2m+1)|$.  Now let $(\frac{2m_j+1}{2n_j})_{j\in\N} \subset\Q$ be the sequence uniquely determined by requiring $\gcd(2n_j,2m_j+1) = 1$, setting $n_0 = 1$ and choosing $n_j> n_{j-1}$ to be the smallest integer such that
$\epsilon(n_j) < \epsilon(n_{j-1})$.  From this it is entertaining to compute that
$$
\limsup |m_L^-(t)| |t|^{q-1}\,dt \approx \limsup 2^{-n_j} |\log \epsilon(n_j)|^q.
$$
In particular, $G^\pm\in \E^p(T^\mp, \om^\pm)$ if the right side is finite for some $q>p$.  We remark that finiteness holds for almost all $\theta\in\R$ and can be checked for any given irrational $\theta$ by examining its continued fraction expansion.
\vskip.2cm
\end{exa}

The next example is studied in  \cite{dg} where it is proved that $G^+ \in L^1(T^-)$. Hence 
we are in a situation where the alternative of Theorem \ref{thm:alternative} holds.

\begin{exa} \label{exa:3lines}
For parameters $a,b,c \in \C^*$, we consider the rational transformation
of the complex projective plane, $f=f_{abc}:\P^2 \rightarrow \P^2$, defined by
$$
f[x:y:z]=[bcx(-cx+acy+z):
acy(x-ay+abz):abz(bcx+y-bz)].
$$
The following facts can be verified by straightforward computation.
\begin{itemize}
\item $f_{abc}$ is birational with inverse $f^{-1} = f_{a^{-1}b^{-1}c^{-1}}$.
\item $I_f = \{[a:1:0],[0:b:1],[1:0:c]\}$.
\item $f$ preserves each of the lines $\{x=0\}$, $\{y=0\}$, $\{z=0\}$
  according to the formulas
$$
[x:1:0] \mapsto [-{bc}x:a:0],\;
[0:y:1]\mapsto \left[0:-{ac}y:b\right],\;
[1:0:z]\mapsto \left[c:0:-{ba}z\right]
$$
\end{itemize}
In particular, we have $I^\infty_f, I^\infty_{f^{-1}} \subset \{xyz=0\}$ for
all $a,b,c\in\C^*$.

Given $s>1$ and an irrational number $\theta\in\R$, let 
$f:\P^2\to\P^2$ be the birational map $f=f_{abc}$ with $a=i$, 
$b=-se^{2\pi i\theta}$, $c=i/s$.  One can then check 
(see \cite{dg}) that
\begin{itemize}
\item $f$ is $1$-stable on $X=\P^2$;
\item $T^+ \in L^1(T^-)$.
\end{itemize}

Thus the measure $\mu_f=T^+ \wedge T^-$ is a well defined probability measure.
It is further shown in \cite{dg} that $\mu_f$ does not charge curves and
is mixing. We can apply the alternative of Theorem \ref{thm:alternative},
reinforced by the ergodicity of $\mu_f$:
\begin{itemize}
\item either $\mu_f$ is supported on the pluripolar set
$\{G^+ + G^-=-\infty\}$, 
\item 
or $f$ has finite dynamical energy.
\end{itemize}
We expect that the latter always occurs.  When $\theta$ is not too close to rational numbers, this can be verified by 
arguing as in the previous example.
\end{exa}

\end{document}